\begin{document}

%+Title
\title{An elliptic boundary value problem for $G_{2}$-structures}
\author{Simon Donaldson}
%\date{\today}
\maketitle
%-Title

%+Abstract
%\begin{abstract}
%    There is abstract text that you should replace with your own. 
%\end{abstract}
%-Abstract

%+Contents
%\tableofcontents
%-Contents
\newtheorem{prop}{Proposition}
\newtheorem{lem}{Lemma}
\newtheorem{thm}{Theorem}
\newcommand{\bR}{{\bf R}}
\newcommand{\bC}{{\bf C}}
\newcommand{\hatrho}{{\hat{\rho}}}
\newcommand{\hatA}{\hat{A}}
\newcommand{\hatF}{\hat{F}}
\newcommand{\sthird}{{\textstyle \frac{1}{3}}}
\newcommand{\stthird}{{\textstyle \frac{2}{3}}}
\newcommand{\sfourthree}{{\textstyle \frac{4}{3}}}
\newcommand{\squart}{{\textstyle \frac{1}{4}}}
\newcommand{\sthreetwo}{{\textstyle \frac{3}{2}}}
\newcommand{\shalf}{{\textstyle \frac{1}{2}}}
\newcommand{\ualpha}{\underline{\alpha}}

\newenvironment{proof}[1][Proof]{\begin{trivlist}
\item[\hskip \labelsep {\bfseries #1}]}{\end{trivlist}}
\newenvironment{definition}[1][Definition]{\begin{trivlist}
\item[\hskip \labelsep {\bfseries #1}]}{\end{trivlist}}
\newenvironment{example}[1][Example]{\begin{trivlist}
\item[\hskip \labelsep {\bfseries #1}]}{\end{trivlist}}
\newenvironment{remark}[1][Remark]{\begin{trivlist}
\item[\hskip \labelsep {\bfseries #1}]}{\end{trivlist}}

\newcommand{\qed}{\nobreak \ifvmode \relax \else
      \ifdim\lastskip<1.5em \hskip-\lastskip
      \hskip1.5em plus0em minus0.5em \fi \nobreak
      \vrule height0.75em width0.5em depth0.25em\fi}

\ \ \ \ \ \ \ \ \ \ \ \  {\it Dedicated to Jean-Pierre Demailly, for his 60th birthday.}
\section{Introduction} 
The purpose of this paper is to develop a deformation theory for torsion-free $G_{2}$-structures on $7$-manifolds with boundary. This extends the well-established theory for closed manifolds, going back to Bryant and Harvey (see page 561 in \cite{kn:Br}) and further developed by Joyce \cite{kn:J}, \cite{kn:J1} and Hitchin \cite{kn:H1},\cite{kn:H2}. Recall that a torsion-free $G_{2}$-structure on an oriented $7$-manifold $M$ can be viewed as a closed  $3$-form $\phi$ which is \lq\lq positive''(in a sense we recall below) at each point of $M$  and which satisfies the nonlinear equation
\begin{equation}   d *_{\phi} \phi =0  , \end{equation} where $*_{\phi}$ is the $*$-operator of the Riemannian metric $g_{\phi}$ defined by $\phi$ (which we also recall below). Begin with the standard case when $M$ is a closed manifold and let $c$ be a class in $H^{3}(M;\bR)$. Write ${\cal P}_{c}$ for the set of positive $3$-forms representing $c$. Certainly a torsion-free structure $\phi$ defines a point in ${\cal P}_{c}$ with $c=[\phi]$. Conversely, if we have a $c$ such that ${\cal P}_{c}$ is not empty then, as Hitchin observed, solutions of equation (1) in ${\cal P}_{c}$ correspond to critical points of the volume functional
\begin{equation}   V(\phi)= {\rm Vol}(M,g_{\phi})   \end{equation}
on ${\cal P}_{c}$. In fact $d(*_{\phi}\phi)$ can be regarded as the derivative $dV$ of the volume functional on the infinite-dimensional space ${\cal P}_{c}$. The basic results of the standard theory can be summarised  as follows.
\begin{enumerate}\item The derivative $dV$ is a Fredholm section of the cotangent bundle of the quotient  ${\cal Q}_{c}={\cal P}_{c}/{\cal G}$ of ${\cal P}_{c}$ by the group ${\cal G}$ of diffeomorphisms of $M$ isotopic to the identity. Thus the kernel of the Hessian of the volume functional on ${\cal Q}_{c}$ at a solution of (1) is finite-dimensional. 
\item In fact this kernel is always $0$,  which implies that if $\phi$ is a solution of (1) and if $c'$ is sufficiently close to $c=[\phi]$ in $H^{3}(M)$ then there is a unique solution $\phi'$ in ${\cal Q}_{c'}$ close to $\phi$. (Throughout this paper, cohomology is always taken with real coefficients.) In other words the \lq\lq period map'' $\phi\mapsto [\phi]$ defines a local homeomorphism from the  moduli space of torsion-free $G_{2}$-structures to $H^{3}(M)$. 
\item In fact the Hessian of $V$ on ${\cal Q}_{c}$ is negative-definite. A solution of (1) gives a strict local maximum for the volume functional on ${\cal Q}_{c}$.  
\end{enumerate} 

Now we go on to the case of a compact, connected,  oriented manifold $M$ with non-empty boundary $\partial M$. If $\rho$ is a closed $3$-form on $\partial M$ we define an {\it enhancement} of $\rho$ to be an equivalence class of closed forms $\phi$ on $M$ which restrict to $\rho$ on the boundary, under the equivalence relation
$\phi\sim \phi+d\alpha$ for all 2-forms $\alpha$ which vanish on $\partial M$. So the set of enhancements is an affine space with tangent space $H^{3}(M,\partial M)$. There is an algebraic notion of a positive $3$-form on $\partial M$. One definition is that these are exactly the forms which extend to positive forms on some neighbourhood of $\partial M$ in $M$. Fix a closed positive form $\rho$ on $\partial M$ and enhancement $\hatrho$. We write ${\cal P}_{\hatrho}$ for the set of positive forms in the enhancement class (in general, ${\cal P}_{\hatrho}$ could be the empty set) and ${\cal Q}_{\hatrho}$ for the quotient by the identity component of the group of diffeomorphisms of $M$ fixing the boundary pointwise. The boundary value problem, which was introduced in \cite{kn:D} and which we consider further here, is to solve equation (1) for $\phi$ in ${\cal P}_{\hatrho}$. Just as before, this is the Euler-Lagrange equation for the Hitchin volume functional, which descends to a functional on ${\cal Q}_{\hatrho}$.

The author has only been able to extend the first of the three results from the standard theory above to this setting. That is (continuing the informal discussion---more precise technical statements are given later),  we will show below (Proposition 8) that the the derivative of the volume functional is a Fredholm section of the cotangent bundle of ${\cal Q}_{\hatrho}$. This comes down to showing that our problem can be set up as an elliptic boundary value problem. The crucial linear result is Theorem 1. The kernel of the Hessian at a critical point $\phi$ is a finite dimensional vector space $H_{\phi}$ but this is not $0$ in general. Similarly, we can show that the Hessian has finite index (i.e. a finite dimensional negative subspace) but we have not been able to show that the Hessian is semi-definite. We will discuss these questions at greater length in Section 5 below. In any event, we do know  cases in which the space $H_{\phi}$ is zero and in such cases we get a straightforward deformation theory for our problem: for any enhanced boundary data  sufficiently close to $\hatrho$ there is a unique solution to the corresponding boundary value problem close to $\phi$ (Theorem 2). In Section 5  we give one application to the existence of \lq\lq $G_{2}$-cobordisms'' between closed 3-forms on a Calabi-Yau 3-fold (Theorem 4). 

The authors's work is  supported by the Simons Collaboration Grant \lq\lq Special holonomy in Geometry, Analysis and Physics''.

\section{Review of standard theory}
We begin with some purely algebraic statements.
\begin{itemize}\item
A $3$-form $\phi\in\Lambda^{3}(V^{*})$ on an oriented $7$-dimensional real vector space $V$ is called positive if the $\Lambda^{7}V^{*}$-valued quadratic form on $V$
\begin{equation} v\mapsto i_{v}(\phi)\wedge i_{v}(\phi)\wedge \phi \end{equation}
is positive definite. We fix a Euclidean structure $g_{\phi}$ in this conformal class by normalising so that $\vert \phi\vert^{2}=7$. Then, as in the Introduction, we have a $4$-form $*_{\phi}\phi$ which we also write as $\Theta(\phi)$. So $\Theta$ is a smooth map from the space of positive 3-forms on $V$ to $\Lambda^{4}V^{*}$. The  positive $3$-forms on $V$ form a single orbit under the action of $GL^{+}(V)$, so they are all equivalent. A convenient standard model for this paper is to take $V= \bR\oplus \bC^{3}= \{ (t, z_{1}, z_{2}, z_{3})\}$ and 
\begin{equation}   \phi= \omega \wedge dt + {\rm Im} (dz_{1}dz_{2}dz_{3}), \end{equation} where $\omega$ is the standard symplectic form $\sum dx_{a}\wedge dy_{a}$ on $\bC^{3}$. 
\item The stabiliser in $GL(V)$ of a positive $3$-form is isomorphic to the exceptional Lie group $G_{2}$. Under the action of this group the forms decompose as
\begin{equation}  \Lambda^{2}= \Lambda^{2}_{7}\oplus \Lambda^{2}_{14}\ \ \ \ \Lambda^{3}= \Lambda^{3}_{1}\oplus \Lambda^{3}_{7}\oplus \Lambda^{3}_{27}. \end{equation}
Here $\Lambda^{2}_{7}$ is the image of $V$ under the map $v\mapsto i_{v}(\phi)$ and $\Lambda^{2}_{14}$ is the orthogonal complement; $\Lambda^{3}_{7}$ is the image of $V$ under the map $v\mapsto i_{v}(*_{\phi} \phi)$,\  $\Lambda^{3}_{1}$ is the span of $\phi$ and $\Lambda^{3}_{27}$ is the orthogonal complement of their sum. 

We have a quadratic form on $\Lambda^{2}$ defined by $\alpha\mapsto \alpha\wedge\alpha\wedge \phi$. The eigenspaces of this form, relative to the standard Euclidean structure, are $\Lambda^{2}_{7}, \Lambda^{2}_{14}$. For $\alpha_{7}\in \Lambda^{2}_{7}$
\begin{equation}  \alpha_{7}\wedge \alpha_{7} \wedge \phi = 2 \vert \alpha_{7}\vert^{2} {\rm vol}, \end{equation}
and for $\alpha_{14}\in \Lambda^{2}_{14}$
\begin{equation}   \alpha_{14}\wedge \alpha_{14} \wedge \phi =- \vert \alpha_{14}\vert^{2} {\rm vol}. \end{equation}
\item The volume form {\rm vol} is a $\Lambda^{7}$-valued function on the open set of positive $3$-forms. Its derivative is given by
\begin{equation}   {\rm vol}(\phi+\delta \phi) = {\rm vol}(\phi) + \sthird \delta\phi \wedge \Theta(\phi)+ O(\delta\phi^{2}). \end{equation}
To identify the second derivative we write 
$$  \delta\phi= \delta_{1} \phi + \delta_{7}\phi + \delta_{27} \phi, $$
according to the decomposition (5). Then
$$   {\rm vol}(\phi+\delta \phi) = {\rm vol}(\phi) + \sthird\delta\phi
\wedge (*_{\phi}\phi)+ \stthird q(\delta\phi) {\rm vol}(\phi) + O(\delta \phi^{3}), $$ where $q$ is the quadratic form
\begin{equation} q(\delta\phi)= \sfourthree \vert \delta_{1}\phi\vert^{2} + \vert \delta_{7}\phi\vert^{2}- \vert \delta_{27}\vert^{2}. \end{equation}

This formula also gives the derivative of the map $\Theta$ (\cite{kn:J1}, Prop. 10.3.5):
\begin{equation} \Theta(\phi+\delta \phi)= \Theta(\phi)+\left( \sfourthree *_{\phi} \delta_{1}\phi + *_{\phi} \delta_{7}\phi- *_{\phi}\delta_{27}\phi\right) + O(\delta\phi^{2}). \end{equation} 

\end{itemize}

Now let $M$ be an  oriented $7$-manifold and $\phi$ be a positive $3$-form on $M$ which defines a torsion-free $G_{2}$-structure, so both $\phi$ and $*_{\phi}\phi$ are closed forms.  We can   decompose the exterior derivative according to the decomposition of the forms
$$   \Omega^{2}= \Omega^{2}_{7}\oplus  \Omega^{2}_{14}\ \ \ \Omega^{3}= \Omega^{3}_{1}\oplus\Omega^{3}_{7}\oplus\Omega^{3}_{27}. $$
The resulting operators satisfy various identities, akin to the K\"ahler identities on K\"ahler manifolds. The following Proposition states the main identities we will need in this paper (there is a comprehensive treatment in \cite{kn:Br2}).   Write $\chi:\Lambda^{1}\rightarrow \Lambda^{3}_{7}$ for the bundle isomorphism $\chi(\eta) =*_{\phi} (\eta\wedge \phi)$. We also usually write $*$ for $*_{\phi}$ and $d^{*}$ for the usual adjoint constructed using the metric $g_{\phi}$.
\begin{prop}
\begin{enumerate} \item The component $d_{1}:\Omega^{2}_{14}\rightarrow \Omega^{3}_{1}$ is identically zero.
\item The component
$d_{7}:\Omega^{2}_{14}\rightarrow \Omega^{3}_{7}$
is equal to the composite  $\squart \chi\circ d^{*}$ where $d^{*}:\Omega^{2}_{14}\rightarrow \Omega^{1}, $ and the component
$  d_{7}:\Omega^{2}_{7}\rightarrow \Omega^{3}_{7} $
is equal to the composite  $-\shalf\chi\circ d^{*}$ where
$   d^{*}:\Omega^{2}_{7}\rightarrow \Omega^{1}. $
\item For $d_{7}:\Omega^{1}\rightarrow \Omega^{2}_{7}$ and $d_{14}:\Omega^{1}\rightarrow \Omega^{2}_{14}$ we have 
$$ d^{*}d_{14}= 2d^{*}d_{7}=\stthird d^{*}d$$ on $\Omega^{1}$. 
\end{enumerate}
\end{prop}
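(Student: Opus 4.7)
For \textbf{(1)}, my approach is representation-theoretic. The $G_{2}$-equivariant linear map $\Lambda^{2}_{14}\to \Lambda^{6}$, $\alpha\mapsto \alpha\wedge *\phi$, sends the $14$-dimensional adjoint irrep of $G_{2}$ to $\Lambda^{6}\cong \Lambda^{1}$, the standard $7$-dimensional irrep. Since these irreps are inequivalent, Schur's lemma forces the map to be identically zero; hence $\alpha\wedge *\phi=0$ pointwise for any $\alpha\in\Omega^{2}_{14}$. Differentiating and using $d(*\phi)=0$ then give $d\alpha\wedge *\phi = d(\alpha\wedge *\phi)=0$. Because $\langle d\alpha,\phi\rangle\,{\rm vol} = d\alpha\wedge *\phi$, this is exactly the vanishing of the $\Omega^{3}_{1}$-projection of $d\alpha$, which is what $d_{1}:\Omega^{2}_{14}\to\Omega^{3}_{1}$ extracts.

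For \textbf{(2)}, the key preliminary computation is the formal adjoint $d^{*}\chi:\Omega^{1}\to\Omega^{2}$. Starting from $\chi(\eta)=*(\eta\wedge\phi)$, and using $** = {\rm id}$ on $4$-forms together with $d\phi=0$, one obtains $d^{*}\chi(\eta)=-*(d\eta\wedge\phi)$. Applying the algebraic identities (6)--(7) to the $2$-form $d\eta$ gives $d\eta\wedge\phi = 2*(d\eta)_{7} - *(d\eta)_{14}$, so
\[ d^{*}\chi(\eta)\ =\ d_{14}\eta - 2\,d_{7}\eta. \]
A one-line calculation on the standard model (4), e.g.\ with $\eta=dt$, gives the normalisation $|\chi(\eta)|^{2}=4|\eta|^{2}$. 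Integration by parts against a $1$-form $\eta$ compactly supported in the interior of $M$ then yields
\[ \int\langle d_{7}\alpha,\chi(\eta)\rangle \ =\ \int\langle d\alpha,\chi(\eta)\rangle \ =\ \int\langle \alpha,\, d_{14}\eta-2\,d_{7}\eta\rangle, \]
where the first equality uses $\chi(\eta)\in\Omega^{3}_{7}$. For $\alpha\in\Omega^{2}_{14}$ only the $d_{14}\eta$ term survives, giving the right-hand side $\int\langle d^{*}\alpha,\eta\rangle$; matching against $\int\langle\squart\chi(d^{*}\alpha),\chi(\eta)\rangle = \int\langle d^{*}\alpha,\eta\rangle$ and using density yields the pointwise identity $d_{7}\alpha = \squart\,\chi\circ d^{*}\alpha$. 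For $\alpha\in\Omega^{2}_{7}$ only the $-2\,d_{7}\eta$ term survives, and the same matching gives $d_{7}\alpha = -\shalf\,\chi\circ d^{*}\alpha$.

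For \textbf{(3)}, apply part (2) to the exact closed $2$-form $\alpha = d\xi$ with $\xi\in\Omega^{1}$, with the decomposition $\alpha = d_{7}\xi + d_{14}\xi$. Since $d(d\xi)=0$, its $\Omega^{3}_{7}$-component also vanishes, so by linearity and part (2) applied to each summand,
\[ 0 \ =\ -\shalf\,\chi(d^{*}d_{7}\xi) \ +\ \squart\,\chi(d^{*}d_{14}\xi). \]
Injectivity of $\chi$ gives $d^{*}d_{14}\xi = 2\,d^{*}d_{7}\xi$, and combining with $d^{*}d\xi = d^{*}d_{7}\xi + d^{*}d_{14}\xi = 3\,d^{*}d_{7}\xi$ delivers $2\,d^{*}d_{7} = \stthird\,d^{*}d$ as claimed.

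The main obstacle is part (2): concretely, the adjoint identity $d^{*}\chi = d_{14}-2\,d_{7}$ together with the norm ratio $|\chi(\eta)|^{2}/|\eta|^{2}=4$. Once these are in hand, part (1) is a one-line consequence of Schur, and part (3) follows by applying part (2) to an exact form.
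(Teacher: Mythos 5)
Your proposal is correct and follows essentially the same route as the paper: part (2) rests on the same duality/Stokes argument (your formula $d^{*}\chi(\eta)=d_{14}\eta-2d_{7}\eta$ is just the paper's integration by parts using $d\phi=0$ and the eigenvalue identities (6)--(7), combined with the normalisation $\vert\chi(\eta)\vert^{2}=4\vert\eta\vert^{2}$), and part (3) is exactly the paper's observation that the $\Omega^{3}_{7}$-component of $d^{2}$ vanishes. The only cosmetic difference is in part (1), where you argue pointwise from $\alpha\wedge *\phi=0$ on $\Lambda^{2}_{14}$ (Schur) together with $d*\phi=0$, while the paper integrates against $f\phi$ and uses the vanishing of the wedge pairing between $\Lambda^{2}_{14}$ and $\Lambda^{5}_{7}$ --- the same representation-theoretic fact in integrated form.
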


\

\begin{proof}
For the first item, it suffices to prove that for  a compactly supported $\alpha\in \Omega^{2}_{14}$ and function $f$   the $L^{2}$-inner product $\langle d \alpha, f \phi\rangle$ is zero. This inner product is
$$ \int_{M} d\alpha\wedge f *\phi= - \int_{M} \alpha \wedge df \wedge *\phi, $$
(using $d *\phi=0$) which vanishes since $df\wedge* \phi$ lies in $\Omega^{5}_{7}$.  For the second item we consider first an $\alpha\in \Omega^{2}_{14}$ as above and the inner product $\langle d^{*}\alpha, \eta\rangle$ for a $1$-form $\eta$. By definition this is $\langle \alpha, d\eta\rangle$ and by (7) the latter can be expressed as
$$           - \int_{M}\alpha\wedge d\eta\wedge \phi . $$
By Stokes' Theorem (using, this time, $d\phi=0$) this is
$$    \int_{M} d\alpha\wedge\eta\wedge\phi=  \langle d\alpha, *(\eta\wedge\phi)\rangle=  \langle d_{7}\alpha, \chi(\eta)\rangle .$$
 One computes readily that for any $\eta$ we have
 $$ \vert \chi(\eta)\vert^{2}= 4 \vert \eta\vert^{2}, $$
and it follows that $d_{7}\alpha= \squart \chi\circ d^{*}\alpha$. The argument for the second part of the second item---for $\alpha\in\Omega^{2}_{7}$---is the same using (6). 

For the  third item: the equality $d^{*}d_{14}=2 d^{*}d_{7}$  follows from the second item and the fact that the component of $d^{2}$ from $\Omega^{1}$ to $\Omega^{3}_{7}$ is zero. The equality $d^{*}d_{14}= \stthird d^{*}d$ follows in turn because $d^{*}d= d^{*} d_{7}+d^{*}d_{14}$. \qed

\end{proof}

The variation of the volume functional (2) with respect to compactly supported variations of $\phi$ makes sense, even if $M$ is not compact. Suppose for the moment that $\phi$ is any closed positive $3$-form on $M$ and that $\alpha$
is a $2$-form with compact support. The pointwise formula (8) and integration by parts give
$$ {\rm Vol}(\phi+d\alpha) = {\rm Vol}(\phi) -\sthird \int_{M} \alpha\wedge d\Theta(\phi) + O(\alpha^{2}), $$
which shows that the torsion-free condition $d\Theta(\phi)=0$ is the Euler-Lagrange equation associated to the volume functional for exact variations. For any such $\phi$ we have
\begin{equation}  *_{\phi} d\Theta(\phi)\in \Omega^{2}_{14}(M). \end{equation}
This follows by direct calculation or, more conceptually, from the diffeomorphism invariance of the volume functional (see \cite{kn:D0}, Lemma 1). 

Now go back to assuming that $\phi$  defines a torsion-free $G_{2}$-structure, {\it i.e.} $d\Theta(\phi)=0$. For $\alpha $ in $\Omega^{2}(M)$ we define
\begin{equation}  W(\alpha)= *_{\phi} d \Theta(\phi+d\alpha) \end{equation}
so the equation $W(\alpha)=0$ is the torsion-free equation, for such variations. For any $\alpha$ we have $d^{*}W(\alpha)=0$ and $W(\alpha)$ takes values in the sub-bundle $$*_{\phi}\Lambda^{5}_{14,\phi+d\alpha}\subset \Lambda^{2}, $$ in an obvious notation. Let $L$ be the linearisation of the nonlinear operator $W$ at $\alpha=0$, {\it i.e.} 
$  W(\alpha) = L(\alpha) + O(\alpha^{2})$. 
By (10) this linearised operator is given by the formula
\begin{equation}
   L(\alpha) = \sfourthree d^{*}d_{1} \alpha + d^{*} d_{7}\alpha - d^{*}d_{27}\alpha.  \end{equation}

\begin{prop}
The linear operator $L$ vanishes on $\Omega^{2}_{7}$ and takes values in $\Omega^{2}_{14}$. For $\alpha=\alpha_{7}+\alpha_{14}$ we have
$$  L(\alpha)= d^{*}d_{7}\alpha_{14}- d^{*}d_{27}\alpha_{14} =-\Delta\alpha_{14}+ \sthreetwo d_{14}d^{*}\alpha_{14} $$
\end{prop}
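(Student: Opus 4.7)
My plan has three parts: the two structural claims first (vanishing on $\Omega^2_7$ and landing in $\Omega^2_{14}$), and then the explicit formula on $\Omega^2_{14}$.

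First, to see $L|_{\Omega^2_7}=0$, I would appeal to diffeomorphism invariance. Any $\alpha\in\Omega^2_7$ has the form $i_v\phi$ for a vector field $v$, and since $d\phi=0$, $d\alpha=\mathcal{L}_v\phi$. The flow $F_{tv}$ of $v$ satisfies $F^*_{tv}\phi=\phi+t\,\mathcal{L}_v\phi+O(t^2)$; because $\Theta$ is ${\rm Diff}^+$-equivariant and $d\Theta(\phi)=0$, we have $d\Theta(F^*_{tv}\phi)=F^*_{tv}\,d\Theta(\phi)=0$, and therefore $W(t\alpha)=O(t^2)$, so $L(\alpha)=0$. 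For the range statement, I would invoke (11): $W(\alpha)\in *_{\phi}\Lambda^5_{14,\phi+d\alpha}$, a sub-bundle of $\Lambda^2$ whose fibre at $\alpha=0$ is $\Omega^2_{14,\phi}$. Since $W(0)=0$, the linearisation $L(\alpha)=\frac{d}{dt}|_{t=0}W(t\alpha)$ is constrained to lie in this fibre, so $L(\alpha)\in\Omega^2_{14}$.

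For the explicit formula, I would compute directly from (13). For $\alpha_{14}\in\Omega^2_{14}$, Proposition 1(1) gives $d_1\alpha_{14}=0$, so $d\alpha_{14}=d_7\alpha_{14}+d_{27}\alpha_{14}$; eliminating $d_{27}$ via $d^*d_{27}\alpha_{14}=d^*d\alpha_{14}-d^*d_7\alpha_{14}$ gives
\[
L(\alpha_{14}) = 2\,d^*d_7\alpha_{14}-d^*d\alpha_{14}.
\]
Proposition 1(2) now gives $d_7\alpha_{14}=\squart\chi(d^*\alpha_{14})$, so the task reduces to an auxiliary identity for $d^*\circ\chi:\Omega^1\to\Omega^2$. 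I would establish this identity by duality: for any $1$-form $\eta$ and $2$-form $\beta$,
\[
\langle d^*\chi\eta,\beta\rangle=\langle\chi\eta,(d\beta)_7\rangle,
\]
and substituting the two cases of Proposition 1(2) (one for $\beta\in\Omega^2_7$, the other for $\beta\in\Omega^2_{14}$), together with $|\chi\eta|^2=4|\eta|^2$, delivers
\[
d^*\chi\eta=-2\,d_7\eta+d_{14}\eta.
\]
Applying this with $\eta=d^*\alpha_{14}$ yields $d^*d_7\alpha_{14}=-\shalf\,d_7d^*\alpha_{14}+\squart\,d_{14}d^*\alpha_{14}$, and substituting into the displayed formula for $L(\alpha_{14})$ gives
\[
L(\alpha_{14})=-d_7d^*\alpha_{14}+\shalf\,d_{14}d^*\alpha_{14}-d^*d\alpha_{14}.
\]
Using $dd^*\alpha_{14}=d_7d^*\alpha_{14}+d_{14}d^*\alpha_{14}$ and $\Delta=dd^*+d^*d$, this rearranges to the claimed $-\Delta\alpha_{14}+\sthreetwo\,d_{14}d^*\alpha_{14}$.

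The main obstacle is the auxiliary identity $d^*\chi=-2d_7+d_{14}$. The duality approach keeps the signs under control, because they are forced by the opposite-sign eigenvalue formulas (6)--(7) underlying Proposition 1(2); a direct Hodge-star derivation would instead require pinning down the sign of $d^*=\pm\! *d*$ on $3$-forms and recomputing the projection of $*(d\eta\wedge\phi)$ onto $\Omega^2_7$ and $\Omega^2_{14}$ from scratch. Everything else is clean bookkeeping with the Hodge decompositions.
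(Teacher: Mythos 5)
Your proposal is correct, and for the two structural claims it coincides with the paper's argument: vanishing of $L$ on $\Omega^{2}_{7}$ from diffeomorphism invariance of $\Theta$ (with $d(i_{v}\phi)={\cal L}_{v}\phi$), and the range statement from the fact that $W(\alpha)$ is a section of $*_{\phi}\Lambda^{5}_{14,\phi+d\alpha}$ together with $W(0)=0$ (note the paper's proof writes $\Lambda^{5}_{7}$ there, which is a typo; your version is the right one). For the explicit formula the paper gives no computation at all, only the assertion that it follows from items (1) and (3) of Proposition 1; you instead give a full derivation from items (1) and (2), the new ingredient being the auxiliary identity $d^{*}\chi\eta=-2d_{7}\eta+d_{14}\eta$ obtained by dualising both halves of item (2) against $\vert\chi\eta\vert^{2}=4\vert\eta\vert^{2}$. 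I checked the bookkeeping: $L=2d^{*}d_{7}-d^{*}d$ on $\Omega^{2}_{14}$, then $d^{*}d_{7}\alpha_{14}=-\shalf d_{7}d^{*}\alpha_{14}+\squart d_{14}d^{*}\alpha_{14}$, and the rearrangement to $-\Delta\alpha_{14}+\sthreetwo d_{14}d^{*}\alpha_{14}$ all come out right, so the proof is complete. Two remarks. First, your duality step implicitly takes the test form $\beta$ compactly supported (otherwise boundary terms appear); since the goal is a pointwise identity of differential operators this is harmless and matches the convention the paper itself uses in proving Proposition 1, but it deserves a word. Second, the direct Hodge-star route you set aside is in fact short: on a $7$-manifold $d^{*}=-*d*$ on $3$-forms and $**=1$ on $4$-forms give $d^{*}\chi\eta=-*(d\eta\wedge\phi)$, and the eigenvalue identities (6)--(7), equivalently $*(\beta\wedge\phi)=2\beta$ on $\Lambda^{2}_{7}$ and $-\beta$ on $\Lambda^{2}_{14}$, yield $-2d_{7}\eta+d_{14}\eta$ at once; this independently confirms your lemma. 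As for the comparison with the paper's citation of item (3): item (3) alone only controls $d^{*}$ of the relevant expressions (it is anyway a consequence of item (2)), and any pointwise derivation seems to pass through the adjoint of item (2) in one guise or another, which is exactly what your auxiliary identity packages; so your route is a legitimate, and arguably the natural, way to fill in the detail the paper leaves implicit.
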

\begin{proof}
The fact that $L$ vanishes on $\Omega^{2}_{7}$ follows from diffeomorphism invariance (or by direct calculation). Similarly, the fact that $L$ takes values in $\Omega^{2}_{14}$ is a consequence of the fact above that $W(\alpha)$ is a section  of $*_{\phi}\Lambda^{5}_{7,\phi+d\alpha}$ (or can be shown by direct calculation). The formulae for $L(\alpha)$ follow from items (1) and (3) in Proposition 1. \qed\end{proof}

There is a similar discussion for the Hessian of the volume functional. For $\alpha$ of compact support
$  {\rm Vol}(\phi+d\alpha)= {\rm Vol}(\phi) + \stthird Q(\alpha)$ where
\begin{equation}  Q(\alpha) = \sfourthree \Vert d_{1}\alpha \Vert^{2} + \Vert d_{7}\alpha\Vert^{2} - \Vert d_{27}\alpha\Vert^{2}. \end{equation}

This can also be expressed,  for $\alpha=\alpha_{7}+\alpha_{14}$, as
\begin{equation} Q(\alpha) = \Vert d_{7}\alpha_{14}\Vert^{2}- \Vert d_{27}\alpha_{14}\Vert^{2}= \langle L\alpha_{14},\alpha_{14}\rangle. \end{equation}
But we should emphasise that (14) is the \lq\lq primary'' formula (derived pointwise on $M$) and the passage to the expressions in (15) involves an application of Stokes' Theorem.

\ 

\

 We will now sketch a treatment of the standard results for closed manifolds mentioned in the Introduction. In this sketch we will just give a formal treatment, ignoring the analytical aspects, but these will be taken up in a more general setting in Section 4. To avoid unimportant complications we suppose here that $H^{2}(M)=0$.

   With $c=[\phi]\in H^{3}(M)$, the tangent space of ${\cal P}_{c}$ at $\phi$ is 
$$   T{\cal P}_{c}= {\rm Im} ( d:\Omega^{2}\rightarrow \Omega^{3}).
$$
The infinitesimal action of the group ${\cal G}$ of diffeomorphisms of $M$ is by the Lie derivative. But, since $\phi$ is closed,  for a vector field $v$ we have
$$  {\cal L}_{v}\phi= d(i_{v}(\phi))$$
and the 2-forms $i_{v}(\phi)$ are exactly $\Omega^{2}_{7}$. So the tangent space of ${\cal Q}_{c}$ at $\phi$ is 
$$  T{\cal Q}_{c}= \frac{  {\rm Im} ( d:\Omega^{2}(M)\rightarrow \Omega^{3}(M))}{ d\Omega^{2}_{7}}. $$
Let $\pi: \Omega^{2}_{14}\rightarrow  T{\cal Q}_{c}$ be the map induced by exterior derivative. This is obviously surjective and the kernel consists of those $\alpha_{14}\in \Omega^{2}_{14}$ such that there is an $\alpha_{7}\in \Omega^{2}_{7}$ with $d\alpha_{7}=d\alpha_{14}$. Under our assumption that $H^{2}(M)=0$ this means that $\alpha_{14}-\alpha_{7}=d\eta$ for some $\eta\in \Omega^{1}$, so $\alpha_{14}=d_{14}\eta$. Conversely, if $\alpha_{14}=d_{14}\eta$ we can define $\alpha_{7}=-d_{7}\eta$. So we see that the kernel of $\pi$ is the image of $d_{14}:\Omega^{1}\rightarrow \Omega^{2}_{14}$ and
$$    T{\cal Q}_{c}= \Omega^{2}_{14}/ {\rm Im}\ d_{14}. $$
By standard elliptic theory this can be identified with the kernel of the adjoint:
$$    T{\cal Q}_{c}= {\rm ker}\ d^{*}:\Omega^{2}_{14}\rightarrow \Omega^{1}. $$

Now by items (1) and (2) of Proposition 1,  for $\alpha\in {\rm ker}\  d^{*}\subset \Omega^{2}_{14}$ the only component of $d\alpha$ is $d_{27}\alpha\in \Omega^{3}_{27}$. It follows that for such $\alpha$ the linearised operator $L(\alpha)$ is $-\Delta \alpha$. In other words, after taking account of the diffeomorphism group action in this way, the linearised operator is
$$  -\Delta: \left( {\rm ker}\  d^{*}\subset\Omega^{2}_{14}\right)\rightarrow \left( {\rm ker}\ d^{*}\subset \Omega^{2}_{14}\right), $$ which is invertible. Similarly, with this representation of $T{\cal Q}_{c}$ the Hessian is
$$  Q(\alpha)= -\Vert d\alpha\Vert^{2},$$
which is negative definite.

\section{The boundary value problem--linear theory}

This section represents the heart of this paper, in which we set up a linear elliptic boundary value problem. We suppose, as in the Introduction, that $M$ is a compact, connected,  oriented 7-manifold with non-empty boundary and that the $3$-form $\phi$ defines a torsion-free $G_{2}$-structure on $M$. We will give a representation of the tangent space of ${\cal Q}_{\hatrho}$ at $\phi$ for the enhanced boundary value $\hatrho$ determined by $\phi$ and study the linearisation of the torsion-free equation in this representation.

As a preliminary,  note that there are two different notions of \lq\lq restriction to the boundary'' of a $p$-form $\sigma$ on $M$. One is that the pull back under the inclusion map, an element of $\Omega^{p}(\partial M)$, vanishes. We will denote this by the usual notation $\sigma\vert_{\partial M}=0$. For the other, stronger, notion we mean that the restriction vanishes regarded as a section of the  bundle $\Lambda^{p}T^{*}M\vert_{\partial M}$. We will denote this by the notation $\sigma\Vert_{\partial M}=0$.

To begin we define a vector space
\begin{equation}  T{\cal Q}= \frac{ \{d\gamma: \gamma \in \Omega^{2}(M), \gamma\vert_{\partial M}=0\} }{ \{ d\beta: \beta\in \Omega^{2}_{7}(M), \beta\Vert_{\partial M}=0\}}. \end{equation}
The definition of our space ${\cal P}_{\hatrho}$ and the identification of the vector fields on $M$ with $\Lambda^{2}_{7}$ suggests that this should represent the tangent space of the infinite dimensional manifold ${\cal Q}_{\hatrho}$ but we postpone any precise treatment of this for the present and just take (16) as a definition. Similarly we do not at this stage consider any topology on the vector space. In the same vein, we  define a vector space $H_{\phi}\subset T{\cal Q}$ to be
\begin{equation}  H_{\phi}= \frac{ \{d\gamma: \gamma \in \Omega^{2}(M), L(\gamma)=0,
\gamma\vert_{\partial M}=0\} }{ \{ d\beta: \beta\in \Omega^{2}_{7}(M), \beta\Vert_{\partial
M}=0\}}. \end{equation}
This is the space of solutions of the linearised equations modulo infinitesimal diffeomorphisms. Note that on the right hand side of  (17) the denominator is a subspace of the numerator since $L$ vanishes on $\Omega^{2}_{7}$.

We now discuss the linear algebra of the decomposition $\Lambda^{2}_{7}\oplus \Lambda^{2}_{14}$ on the  boundary. We have a $3$-form $\rho=\phi\vert_{\partial M}$ and a $2$-form $\omega\in \Omega^{2}(\partial M)$ given by $\omega=i_{\nu}(\phi)$ where $\nu$ is a unit outward-pointing normal. At a point $p$ on $\partial M$ the situation corresponds to the model (4) on $\bR\oplus \bC^{3}$. There is a complex structure on the tangent space of $\partial M$ at $p$; the $2$-form $\omega$ is a positive $(1,1)$ form and $\rho$ is the real part of a complex volume form. In terms of the splitting $TM= T\partial M \oplus \bR \nu$ at $p$ the form $\phi$ is
$$  \phi= \omega \wedge \nu^{*} + \rho ,$$
where $\nu^{*}$ is the $1$-form dual to $\nu$.
We define a bundle map $$  \chi_{6}: \Lambda^{1}(\partial M)\rightarrow \Lambda^{2}(\partial M)$$
by $$  \chi_{6}( i_{v}\omega) = i_{v}(\rho), $$ for $v\in T\partial M$. 
We have a decomposition 
\begin{equation}  \Lambda^{2}(\partial M)= \Lambda^{2,\partial}_{6}\oplus \Lambda^{2,\partial}_{8}\oplus \Lambda^{2,\partial}_{1} \end{equation}
where, in terms of the complex structure, the summand $\Lambda^{2,\partial}_{6}$ consists of the real parts of forms of type $(2,0)$, the summand $\Lambda^{2,\partial}_{8}$ consists of the real $(1,1)$ forms orthogonal to $\omega$ and $\Lambda^{2,\partial}_{1}$ is the $1$-dimensional space spanned by $\omega$. Then $\chi_{6}$ is a bundle isomorphism from $\Lambda^{1}(\partial M)$ to $\Lambda^{2,\partial}_{6}$. 

\begin{lem}
At a boundary point:
\begin{enumerate}
\item
$$\Lambda^{2}_{7}= \Lambda^{2,\partial}_{1} \oplus \{ a\wedge \nu^{*} + \chi_{6}(a): a\in \Lambda^{1}(\partial M)\}, $$
\item $$\Lambda^{2}_{14}= \Lambda^{2,\partial}_{8} \oplus \{ 2 a \wedge \nu^{*} - \chi_{6}(a): a \in \Lambda^{1}(\partial M)\}. $$
\end{enumerate}
\end{lem}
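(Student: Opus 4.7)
The statement is pointwise linear algebra at a boundary point, so the plan is to work entirely in the model (4) applied to the splitting $V = T\partial M \oplus \mathbb{R}\nu$ with $\phi = \omega\wedge\nu^{*}+\rho$. The two identities to establish are simply the recipe for elements of $\Lambda^{2}_{7}$ combined with dimension counting and a single orthogonality computation for $\Lambda^{2}_{14}$.

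First I would verify item (1) by computing $i_{v}\phi$ as $v$ ranges over a basis of $V$. Taking $v=\nu$ gives $i_{\nu}\phi = -\omega$, which lies in $\Lambda^{2,\partial}_{1}$. For $v\in T\partial M$, since $\nu^{*}(v)=0$,
\[
 i_{v}\phi = (i_{v}\omega)\wedge \nu^{*} + i_{v}\rho = a\wedge\nu^{*}+\chi_{6}(a),\qquad a:=i_{v}\omega.
\]
Because $\omega$ is non-degenerate on $T\partial M$, the map $v\mapsto i_{v}\omega$ is a linear isomorphism $T\partial M\to \Lambda^{1}(\partial M)$, so $a$ ranges freely over $\Lambda^{1}(\partial M)$. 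The two summands together produce a $1+6=7$-dimensional subspace of $\Lambda^{2}_{7}$, and by dimension it equals $\Lambda^{2}_{7}$; the sum is direct because the second summand has nontrivial $\nu^{*}$ component whenever $a\ne 0$. This proves item (1).

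For item (2) I would argue by orthogonality and dimension. The summand $\Lambda^{2,\partial}_{8}$ has no $\nu^{*}$ component, is orthogonal to $\omega$ by definition, and lies in the $(1,1)$-part so is orthogonal to the $\Lambda^{2,\partial}_{6}$-valued component $\chi_{6}(a)$; hence $\Lambda^{2,\partial}_{8}\perp \Lambda^{2}_{7}$. The other summand $\{2a\wedge\nu^{*}-\chi_{6}(a)\}$ is visibly orthogonal to $\omega$ (no $\nu^{*}$ part in $\omega$, and $\omega\perp \Lambda^{2,\partial}_{6}$). The remaining check is
\[
 \langle 2a\wedge \nu^{*}-\chi_{6}(a),\ b\wedge\nu^{*}+\chi_{6}(b)\rangle = 2\langle a,b\rangle - \langle \chi_{6}(a),\chi_{6}(b)\rangle,
\]
the cross terms vanishing because one factor has a $\nu^{*}$ while the other does not. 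So everything comes down to showing $|\chi_{6}(a)|^{2}=2|a|^{2}$. This I would verify by a short calculation in the standard model: take $a=dx_{1}$, for which the corresponding $v=-\partial/\partial y_{1}$ satisfies $i_{v}\omega=dx_{1}$, and compute $i_{v}\rho = -dx_{2}\wedge dx_{3}+dy_{2}\wedge dy_{3}$, whose squared norm is $2$. By $G_{2}$-invariance (or by redoing the calculation for the $dy_{a}$) this extends to all $a$. Once this is in hand, the two summands in item (2) are mutually orthogonal and together orthogonal to $\Lambda^{2}_{7}$, of total dimension $8+6=14$, hence equal to $\Lambda^{2}_{14}$.

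The only genuine obstacle is the norm computation $|\chi_{6}(a)|^{2}=2|a|^{2}$: this is the ingredient that forces the precise coefficients $2$ and $-1$ in the formula for $\Lambda^{2}_{14}$. Everything else is bookkeeping with the orthogonal decomposition $\Lambda^{2}V^{*}=\Lambda^{2}(\partial M)^{*}\oplus\bigl(\Lambda^{1}(\partial M)^{*}\wedge \nu^{*}\bigr)$ and the fact that $\chi_{6}$ intertwines $\Lambda^{1}(\partial M)$ with $\Lambda^{2,\partial}_{6}$.
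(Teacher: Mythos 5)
Your proof is correct and is exactly the ``straightforward check from the definitions'' that the paper leaves to the reader: contracting $\phi=\omega\wedge\nu^{*}+\rho$ with normal and tangential vectors for item (1), and the orthogonality-plus-dimension argument hinging on $\vert\chi_{6}(a)\vert^{2}=2\vert a\vert^{2}$ for item (2), which matches the explicit formula for $\chi_{6}$ the paper later uses in Lemma 4. Two trivial quibbles: with the paper's convention $\omega=i_{\nu}\phi$ one gets $i_{\nu}\phi=+\omega$ rather than $-\omega$ (irrelevant for the span), and the invariance used to reduce to $a=dx_{1}$ is $SU(3)$-invariance (the stabiliser of the boundary data), not $G_{2}$-invariance.
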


This is straightforward to check, from the definitions. For a form $\alpha\in \Omega^{2}_{14}(M)$ we write $\alpha\Vert_{\partial, 8}$ for the section of $\Lambda^{2,\partial}_{8}$ over $\partial M$ defined by the decomposition in the second item of Lemma 1. Note that for a 2-form $\alpha$ in either of the spaces $\Omega^{2}_{7}, \Omega^{2}_{14}$ the two notions $\alpha\vert_{\partial M}=0, \alpha\Vert_{\partial M}=0$ are equivalent. Now define a vector space
\begin{equation}   \hatA= \{\alpha\in \Omega^{2}_{14}(M): d^{*} \alpha= 0, \alpha\Vert_{\partial, 8}=0\}. \end{equation}

We define a linear map $F_{\hatA}: \hatA\rightarrow T{\cal Q}$ as follows. It is clear from Lemma 1 that if $\alpha_{14}\in\Omega^{2}_{14}$ satisfies $\alpha_{14}\Vert_{\partial, 8}=0$ we can find a form $\beta_{7}\in \Omega^{2}_{7}$ such that $(\alpha_{14}+ \beta_{7})\vert_{\partial M}=0$. For $\alpha_{14}\in \hatA$ we define $F_{\hatA}(\alpha_{14})$ to be the equivalence class of $d(\alpha_{14} +\beta_{7})$ in $T{\cal Q}$. The definition of $T{\cal Q}$ means that this is well-defined, independent of the choice of $\beta_{7}$.

We digress here to review some standard Hodge Theory for manifolds with boundary. For any $p$ we consider the Laplace operator $\Delta:\Omega^{p}(M)\rightarrow \Omega^{p}(M)$ and the equation  with boundary conditions 
\begin{equation}   \Delta\mu=\rho\ , \ \mu\vert_{\partial M}=0\ ,\  d^{*}\mu\vert_{\partial M}=0. \end{equation}

\begin{prop}
\begin{itemize}\item If $d^{*}\rho=0$ then any solution $\mu$ of (20) satisfies $d^{*}\mu=0$. In fact this holds without assuming that $\mu\vert_{\partial M}=0$. 
\item If $\rho=0$ then a solution satisfies $d\mu=0$. The space of such solutions
$$  {\cal H}^{p}= \{\mu \in \Omega^{p}: d\mu=0,d^{*}\mu=0, \mu\vert_{\partial M}=0\}, $$
represents the relative cohomology group $H^{p}(M,\partial M)$.
\item There is a solution of (20) if and only if $\rho$ is $L^{2}$-orthogonal to ${\cal H}^{p}$. In this case we define $G\rho$ to be the unique solution $\mu$ orthogonal to ${\cal H}^{p}$.
\item If $\rho=d^{*}\sigma$ for some $\sigma\in \Omega^{p+1}$ then $\rho$ is orthogonal to ${\cal H}^{p}$.

\end{itemize}\end{prop}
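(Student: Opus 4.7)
The proposition bundles four standard Hodge-theoretic facts for the boundary value problem (20); my plan is to extract all of them from two ingredients, namely Green's identity $(d\alpha,\beta) - (\alpha,d^*\beta) = \int_{\partial M} \alpha \wedge *\beta$ and the fact that (20) is an elliptic, self-adjoint boundary value problem for $\Delta$. I would prove items 1 and 4 by direct integration by parts and then deduce items 2 and 3 from the general elliptic package.

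For item 1, set $\nu = d^*\mu$. Then $\Delta \nu = d^*\Delta\mu = d^*\rho = 0$; $d^*\nu = 0$ automatically from $(d^*)^2 = 0$; and $\nu\vert_{\partial M} = 0$ is exactly the second piece of the boundary data. Pairing $\Delta\nu$ with $\nu$ in $L^2$ and applying Green's identity twice produces $\int_M(|d\nu|^2+|d^*\nu|^2)$ plus two boundary integrals; the one with integrand $\nu\wedge *d\nu$ vanishes because $\nu\vert_{\partial M}=0$, and the one with integrand $d^*\nu\wedge *\nu$ vanishes because $d^*\nu = 0$ already in the bulk. Hence $d\nu = 0$, so $\nu\in{\cal H}^{p-1}$. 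To conclude $\nu = 0$ I would pair it with an arbitrary $h\in{\cal H}^{p-1}$: $(h,d^*\mu) = (dh,\mu) - \int_{\partial M}h\wedge *\mu = 0$, because $dh=0$ and $h\vert_{\partial M}=0$. So $\nu$ lies in ${\cal H}^{p-1}$ and is $L^2$-orthogonal to it, hence vanishes. Observe that $\mu\vert_{\partial M}=0$ plays no role in this chain, matching the stronger statement advertised in the proposition.

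Item 4 is a one-line integration by parts: $(d^*\sigma,h) = (\sigma,dh) - \int_{\partial M}h\wedge *\sigma$, both terms of which vanish for $h\in{\cal H}^p$. Item 3 then follows by combining item 4 (necessity) with the Fredholm alternative for the elliptic boundary value problem (20) (sufficiency); because the problem is self-adjoint, the cokernel coincides with the kernel ${\cal H}^p$, and $G\rho$ is defined as the unique solution orthogonal to ${\cal H}^p$. Item 2 splits into two pieces: $d^*\mu = 0$ is item 1 applied with $\rho=0$, and then $\Delta\mu = d^*d\mu = 0$ together with $\|d\mu\|^2 = (\mu,d^*d\mu) + \int_{\partial M}\mu\wedge *d\mu = 0$ (using $\mu\vert_{\partial M}=0$) yields $d\mu = 0$. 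The identification ${\cal H}^p \cong H^p(M,\partial M)$ is then the content of the relative Hodge decomposition: any closed $\alpha$ with $\alpha\vert_{\partial M}=0$ splits as harmonic plus $d\eta$ with $\eta\vert_{\partial M}=0$, and uniqueness of the harmonic representative again reduces to the orthogonality calculation from item 4.

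The main obstacle I foresee is the foundational step that the proposition takes for granted: verifying that (20) really is an elliptic boundary value problem, i.e.\ that the conditions $\mu\vert_{\partial M}=0$ and $d^*\mu\vert_{\partial M}=0$ satisfy the Lopatinski--Shapiro criterion for the Hodge Laplacian, and that the resulting operator is self-adjoint with finite-dimensional kernel ${\cal H}^p$ canonically isomorphic to $H^p(M,\partial M)$. Once that framework is granted, the rest of the proposition becomes a careful bookkeeping exercise that tracks which of the two pieces of boundary data is responsible for killing which boundary integral.
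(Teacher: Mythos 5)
Your proposal is correct and follows essentially the same route as the paper, which offers no detailed argument beyond "straightforward variants of the usual theory for closed manifolds, checking boundary terms," defers the third item to elliptic boundary value theory, and cites Schwarz (Section 2.4) for the foundational facts; your integration-by-parts bookkeeping for items 1, 2 and 4 is exactly the checking of boundary terms the paper has in mind, and your correct observation that item 1 never uses $\mu\vert_{\partial M}=0$ matches the paper's stronger claim. The one point you flag as taken for granted --- ellipticity and self-adjointness of (20) with kernel ${\cal H}^{p}\cong H^{p}(M,\partial M)$ --- is precisely what the paper also takes from the standard reference, so no gap remains relative to the paper's own treatment.
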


Apart from the third item, the proofs are  straightforward variants of the usual theory for closed manifolds, checking boundary terms. The third item is an application of  elliptic boundary value theory. These results go back to Spencer and Duff, Morrey and Friedrichs. A  standard modern reference is  Section 2.4 in \cite{kn:S}. 

  With this theory at hand we can return to the space $\hatA$. 

\begin{lem}

For $\alpha\in {\cal H}^{2}$ the component $\pi_{14} (\alpha)\in \Omega^{2}_{14}$ lies in $\hatA $ and the induced  map $\pi_{14}:{\cal H}^{2}\rightarrow \hatA$ is injective. 
\end{lem}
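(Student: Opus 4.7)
Writing $\alpha=\alpha_{7}+\alpha_{14}$ for the orthogonal decomposition, the assertion splits into two parts: (i) $\pi_{14}(\alpha)=\alpha_{14}$ lies in $\hatA$, and (ii) $\pi_{14}$ is injective.

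For (i) I combine the two hypotheses $d\alpha=0$ and $d^{*}\alpha=0$ with Proposition 1. The $\Omega^{3}_{7}$-component of $d\alpha$ equals $-\shalf\chi\,d^{*}\alpha_{7}+\squart\chi\,d^{*}\alpha_{14}$ by Proposition 1(2), so its vanishing together with $d^{*}\alpha_{7}+d^{*}\alpha_{14}=0$ forces each of $d^{*}\alpha_{7}$ and $d^{*}\alpha_{14}$ to vanish; in particular $d^{*}\alpha_{14}=0$. For the boundary condition $\alpha_{14}\Vert_{\partial,8}=0$, Lemma 1 places the tangential parts of $\alpha_{7}\Vert_{\partial M}$ and $\alpha_{14}\Vert_{\partial M}$ in $\Lambda^{2,\partial}_{1}\oplus\Lambda^{2,\partial}_{6}$ and $\Lambda^{2,\partial}_{8}\oplus\Lambda^{2,\partial}_{6}$ respectively. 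The hypothesis $\alpha\vert_{\partial M}=0$ says their sum vanishes in $\Lambda^{2,\partial}_{1}\oplus\Lambda^{2,\partial}_{6}\oplus\Lambda^{2,\partial}_{8}$, and orthogonality of the three summands forces each projection to zero; the $\Lambda^{2,\partial}_{8}$-projection is exactly $\alpha_{14}\Vert_{\partial,8}$.

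For (ii) suppose $\alpha_{14}=0$, so $\alpha=\alpha_{7}\in\Omega^{2}_{7}$. The decomposition just used also forces the $\Lambda^{2,\partial}_{1}$- and $\Lambda^{2,\partial}_{6}$-components of $\alpha_{7}\Vert_{\partial M}$ to vanish, so $\alpha_{7}\vert_{\partial M}=0$; since the two notions of restriction coincide on $\Omega^{2}_{7}$-forms one gets $\alpha_{7}\Vert_{\partial M}=0$. Write $\alpha_{7}=i_{v}\phi$ via the bundle isomorphism $v\mapsto i_{v}\phi$ from $TM$ to $\Lambda^{2}_{7}$; then $v$ vanishes on $\partial M$. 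From $d\phi=0$ and $d\alpha_{7}=0$, Cartan's formula gives $\mathcal{L}_{v}\phi=0$, and since $g_{\phi}$ is algebraically determined by $\phi$ the field $v$ is Killing for $g_{\phi}$.

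To conclude I invoke Killing rigidity on the connected manifold $M$: it suffices to show $(\nabla v)(p)=0$ at a single $p\in\partial M$. For $X\in T_{p}\partial M$, $\nabla_{X}v(p)=0$ because $v$ vanishes along any curve in $\partial M$ tangent to $X$; the skew-adjoint endomorphism $(\nabla v)(p)$ therefore kills the $6$-dimensional subspace $T_{p}\partial M$, and skew-adjointness forces $(\nabla v)(p)\nu$ to be orthogonal both to $T_{p}\partial M$ and to $\nu$ itself, so it vanishes. Hence $(\nabla v)(p)=0$, and Killing rigidity gives $v\equiv 0$, whence $\alpha_{7}=0$. The main obstacle is this final Killing-rigidity step in (ii); part (i) is routine algebra from Proposition 1(2) and Lemma 1, but (ii) requires recognising the $\Omega^{2}_{7}$-component $\alpha_{7}$ as an infinitesimal $G_{2}$-automorphism and then exploiting the hypersurface structure of $\partial M$ together with the odd dimension $7$.
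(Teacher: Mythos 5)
Your proposal is correct. Part (i) is essentially the paper's argument: the vanishing of the $\Omega^{3}_{7}$-component of $d\alpha$ together with $d^{*}\alpha=0$ and Proposition 1(2) gives $d^{*}\alpha_{14}=0$, and the boundary decomposition of Lemma 1 extracts $\alpha_{14}\Vert_{\partial,8}=0$ from $\alpha\vert_{\partial M}=0$. For injectivity, however, you take a genuinely different route. The paper stays within Hodge theory: since $\alpha_{7}$ is closed and coclosed it is harmonic, the Weitzenb\"ock formula on $\Omega^{2}_{7}$ agrees with that on $\Omega^{1}$, Ricci-flatness of $g_{\phi}$ gives $\nabla^{*}\nabla\alpha_{7}=0$, and integration by parts with $\alpha_{7}\Vert_{\partial M}=0$ forces $\nabla\alpha_{7}=0$ and hence $\alpha_{7}=0$. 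You instead identify $\alpha_{7}=i_{v}\phi$, observe ${\cal L}_{v}\phi=d\alpha_{7}=0$ so that $v$ is a Killing field vanishing on the hypersurface $\partial M$, and invoke Killing rigidity after checking $(\nabla v)(p)=0$ at a boundary point by the skew-adjointness argument. This is exactly the mechanism the paper itself uses later in the proof of Lemma 5 (where it is quoted as a ``simple fact from Riemannian geometry''), so your route is consistent with the paper's toolkit; it has the small advantage of using only $d\alpha=0$ and the boundary condition (coclosedness of $\alpha_{7}$ plays no role), and of avoiding the Weitzenb\"ock identity and Ricci-flatness, while the paper's Bochner argument is more uniform with the surrounding Hodge-theoretic setup. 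Two cosmetic remarks: in (i) it is only the projections of the \emph{sum} $\alpha\vert_{\partial M}$ that vanish (the $\Lambda^{2,\partial}_{6}$-components of $\alpha_{7}$ and $\alpha_{14}$ cancel rather than vanish individually, which is all you need), and the appeal to ``odd dimension $7$'' is superfluous: a skew-adjoint endomorphism annihilating a codimension-one subspace vanishes in any dimension.
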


\begin{proof}
First suppose that $\alpha=\alpha_{7}+\alpha_{14}$ lies in ${\cal H}^{2}$. Since $d^{*}\alpha$ and $d_{7}\alpha$ both vanish it follows from the second item  of Proposition 1 that $d^{*}\alpha_{14}=0$. Since $\alpha$ vanishes on the boundary it follows that $\alpha_{14}\Vert_{\partial,8}=0$ and thus $\alpha_{14}\in \hatA$. Suppose that $\alpha_{14}=0$. Then $\alpha_{7}\vert_{\partial M}=0$ and,  as we noted above,  this  implies that $\alpha_{7}\Vert_{\partial M}=0$. The 2-form $\alpha_{7}$ is harmonic and by the general theory (see \cite{kn:J1} Section 3.5.2, for example) the Bochner formula on $\Omega^{2}_{7}$ is the same as that on $\Omega^{1}$. Thus we have $\nabla^{*}\nabla \alpha_{7}=0$ (since the Bochner formula on $\Omega^{1}$ involves the Ricci curvature, which vanishes in our case). Now integration by parts, using the boundary condition $\alpha_{7}\Vert_{\partial M}=0$, shows that $\nabla\alpha_{7}=0$, and since $\alpha_{7}$ vanishes on the boundary it must be zero  everywhere.  This shows that $\pi_{14}$ induces an injection from ${\cal H}^{2}$ to $\hatA$. \qed \end{proof}

Define $A\subset \hatA$ to be the orthogonal complement of $\pi_{14}{\cal H}^{2}$.

\begin{prop}
The map $F_{\hatA}:\hatA\rightarrow T{\cal Q}$ is surjective with
kernel $\pi_{14}({\cal H}^{2})$. Hence there is an induced isomorphism
$F_{A}:A\rightarrow T{\cal Q}$.\end{prop}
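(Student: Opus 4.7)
For surjectivity, the plan is, given $[d\gamma]\in T{\cal Q}$ with $\gamma|_{\partial M}=0$, to correct $\gamma_{14}$ by a coexact term to land in $\hatA$. A preliminary observation from Lemma 1 is that $\gamma|_{\partial M}=0$ automatically forces $\gamma_{14}\Vert_{\partial,8}=0$: the $\Lambda^{2,\partial}_{1}$ and $\Lambda^{2,\partial}_{8}$ summands live in $\Lambda^{2}_{7}$ and $\Lambda^{2}_{14}$ respectively, and the $\chi_{6}$ pieces of the two bundles occur with opposite signs, so the vanishing of $\gamma|_{\partial M}$ forces the $\Lambda^{2,\partial}_{8}$ component of $\gamma_{14}$ to vanish separately. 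One then seeks $\eta\in\Omega^{1}(M)$ with $\eta|_{\partial M}=0$ such that $\alpha:=\gamma_{14}+d_{14}\eta$ is coclosed. Using the identity $d^{*}d_{14}=\stthird d^{*}d$ from item (3) of Proposition 1, this becomes the Laplace equation (20) with right-hand side $-\sthreetwo d^{*}\gamma_{14}$. Proposition 3 handles this: item (4) provides orthogonality to ${\cal H}^{1}$ (the right-hand side is $d^{*}$-exact), item (3) produces a solution, and item (1) forces $d^{*}\eta=0$, so $\Delta\eta=d^{*}d\eta$ and the equation holds. The Dirichlet condition $\eta|_{\partial M}=0$ propagates to $(d\eta)|_{\partial M}=0$ (pullback commutes with $d$), which by the same Lemma 1 dissection gives $(d_{14}\eta)\Vert_{\partial,8}=0$, so $\alpha\in\hatA$. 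With the choice $\beta_{7}:=\gamma_{7}+d_{7}\eta\in\Omega^{2}_{7}$, one finds $(\alpha+\beta_{7})|_{\partial M}=(\gamma+d\eta)|_{\partial M}=0$ and $d(\alpha+\beta_{7})=d\gamma$, hence $F_{\hatA}(\alpha)=[d\gamma]$.

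For the kernel, suppose $F_{\hatA}(\alpha)=0$. Unpacking the definitions produces $\beta_{7},\beta^{*}\in\Omega^{2}_{7}$ with $(\alpha+\beta_{7})|_{\partial M}=0$, $\beta^{*}\Vert_{\partial M}=0$, and $d(\alpha+\beta_{7}-\beta^{*})=0$, so that $\kappa:=\alpha+\beta_{7}-\beta^{*}$ is a closed $2$-form with $\kappa|_{\partial M}=0$. The identification ${\cal H}^{2}\simeq H^{2}(M,\partial M)$ from item (2) of Proposition 3 then lets me write $\kappa=h+d\eta$ with $h\in{\cal H}^{2}$ and $\eta|_{\partial M}=0$; projecting onto $\Omega^{2}_{14}$ gives $\alpha=\pi_{14}(h)+d_{14}\eta$. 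Since $\pi_{14}(h)\in\hatA$ by Lemma 2, linearity of $\hatA$ forces $d_{14}\eta\in\hatA$, so $d^{*}d_{14}\eta=0$ and, again by Proposition 1(3), $d^{*}d\eta=0$. Integration by parts against $\eta$ (the boundary term killed by $\eta|_{\partial M}=0$) then yields $d\eta=0$, and hence $\alpha=\pi_{14}(h)\in\pi_{14}({\cal H}^{2})$. The reverse inclusion is immediate: for any $h\in{\cal H}^{2}$, the choice $\beta_{7}:=h-\pi_{14}(h)\in\Omega^{2}_{7}$, $\beta^{*}:=0$ exhibits $F_{\hatA}(\pi_{14}(h))=[dh]=0$. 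Since $A$ is the orthogonal complement of $\pi_{14}({\cal H}^{2})$ in $\hatA$, the restriction $F_{A}$ is then automatically an isomorphism.

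The main delicacy I anticipate is the boundary bookkeeping: both the automatic vanishing $\gamma_{14}\Vert_{\partial,8}=0$ and its counterpart $(d_{14}\eta)\Vert_{\partial,8}=0$ depend on reading Lemma 1 carefully at the boundary, and on keeping straight the distinction between the two restriction operations $|_{\partial M}$ and $\Vert_{\partial M}$. Once this is handled, the rest of the argument is a combination of standard relative Hodge theory (Proposition 3) with the $G_{2}$-identity $d^{*}d_{14}=\stthird d^{*}d$ of Proposition 1(3).
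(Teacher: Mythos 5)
Your argument is correct. The surjectivity half coincides with the paper's proof: the same Hodge-theoretic correction, solving (20) with right-hand side a multiple of $d^{*}\gamma_{14}$ (the paper solves $d^{*}d\eta=d^{*}\gamma_{14}$ and subtracts $\sthreetwo d_{14}\eta$; you absorb the $\sthreetwo$ into $\eta$), the same appeal to $d^{*}d_{14}=\stthird d^{*}d$, and the same boundary bookkeeping via Lemma 1 -- indeed you are slightly more explicit than the paper in noting that $\gamma\vert_{\partial M}=0$ already forces $\gamma_{14}\Vert_{\partial,8}=0$. Where you genuinely diverge is the computation of the kernel. The paper's route is shorter: from $F_{\hatA}(\alpha_{14})=0$ it produces, exactly as you do, a closed form $\alpha=\alpha_{7}+\alpha_{14}$ with $\alpha\vert_{\partial M}=0$, but then observes (as in Lemma 2, via item (2) of Proposition 1) that $d\alpha=0$ together with $d^{*}\alpha_{14}=0$ forces $d^{*}\alpha_{7}=0$, so $\alpha\in{\cal H}^{2}$ on the spot. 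You instead invoke the identification of ${\cal H}^{2}$ with $H^{3-1}(M,\partial M)$-type relative cohomology to write your closed form as $h+d\eta$ with $h\in{\cal H}^{2}$ and $\eta\vert_{\partial M}=0$, and then kill $d\eta$ by showing $d^{*}d_{14}\eta=0$ and integrating by parts. This is valid, but note it leans on the full strength of the word ``represents'' in the Hodge-theory proposition -- namely that every closed form with vanishing pullback differs from a harmonic Dirichlet field by $d$ of a form with vanishing pullback -- together with an extra integration by parts, whereas the paper's version needs only the $G_{2}$-identity relating $d_{7}$ and $d^{*}$ on $\Omega^{2}_{7}\oplus\Omega^{2}_{14}$, which you already have at hand from Lemma 2. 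Your explicit check of the reverse inclusion $\pi_{14}({\cal H}^{2})\subset\ker F_{\hatA}$, and of the final passage from $\hatA=\pi_{14}({\cal H}^{2})\oplus A$ to the isomorphism $F_{A}$, fills in steps the paper leaves implicit.
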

\begin{proof}
Consider any $\gamma=\gamma_{14}+\gamma_{7}\in\Omega^{2}(M)$ with $\gamma\vert_{\partial M}=0$.  We apply Proposition 2 with $\rho= d^{*}\gamma_{14}$, so we find an $\eta=G(d^{*}\gamma_{14})$ with $\eta\vert_{\partial M}=0$ and $d^{*}d\eta=d^{*}\gamma_{14}$.
Now, by the third item of Proposition 1, we have $d^{*} d_{14}\eta= \stthird d^{*}d\eta=\stthird d^{*}\gamma_{14}$. This means that $\alpha_{14}= \gamma_{14} - \sthreetwo d_{14}\eta$ satisfies $d^{*}\alpha_{14}=0$. Also, since $\eta$ vanishes on the boundary so does $d\eta$, and this means that $\alpha_{14}\Vert_{\partial, 8}=0$. Thus $\alpha_{14}$ lies in $\hatA$. Going back to the definition of $F_{\hatA}$: the form $$\Gamma=\alpha_{14}+ \gamma_{7}-\sthreetwo d_{7}\eta= \gamma-\sthreetwo d\eta$$ vanishes on the boundary,  so  $F_{\hatA}(\alpha_{14})$ is the equivalence class of $d\Gamma$ in $T{\cal Q}$. But $d^{2}\eta=0$ so $d\Gamma=d\gamma$. This shows that $F_{\hatA}$ is surjective. 

In the other direction, suppose that $F_{\hatA}(\alpha_{14})=0$, for some $\alpha_{14}\in \hatA$. This means that we can choose an $\alpha_{7}\in \Omega^{2}_{7}$ such that $\alpha=\alpha_{7}+\alpha_{14}$ restricts to zero on the boundary and $d\alpha=0$. As in the proof of Lemma 2, the condition $d^{*}\alpha_{14}=0$ implies that $d^{*}\alpha=0$, so $\alpha$ lies in ${\cal H}^{2}$ and $\alpha_{14}$ is in $\pi_{14}({\cal H}^{2})$.\qed \end{proof}

\

We can now set-up the linear boundary value problem which is the main point of this paper. 
\begin{thm}
  For $\rho$ in $\Omega^{2}_{14}(M)$ the equation $\Delta \alpha=\rho$ for $\alpha\in \Omega^{2}_{14}$, with boundary conditions
$$   \alpha\Vert_{\partial, 8}=0\ , \   d^{*}\alpha\vert_{\partial M}=0, $$
is a self-adjoint elliptic boundary value problem. Moreover if $d^{*}\rho=0$ then a solution $\alpha$ satisfies $d^{*}\alpha=0$.
\end{thm}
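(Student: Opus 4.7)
My plan is to handle the three assertions---ellipticity, self-adjointness, and preservation of $d^{*}\alpha=0$---in the order stated. For \textbf{ellipticity}, the interior operator is the Hodge Laplacian on $\Omega^{2}_{14}$ (preserved on the torsion-free $G_{2}$-manifold) with principal symbol $|\xi|^{2}\,{\rm Id}$, so interior ellipticity is immediate. The substance is the Shapiro-Lopatinski condition at a boundary point. Freezing coefficients and choosing a nonzero tangential cotangent vector $\xi$, the decaying half-line kernel of $\Delta A(t)=0$ is the $14$-dimensional space $\{A_{0} e^{-|\xi|t} : A_{0}\in \Lambda^{2}_{14}\}$, matching the $8+6=14$ boundary constraints. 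By Lemma 1(2), the condition $\alpha\Vert_{\partial,8}=0$ forces $A_{0}=2a\wedge\nu^{*}-\chi_{6}(a)$ for some $a\in\Lambda^{1}(\partial M)$, cutting the family to $6$ parameters. Applying the principal symbol of $d^{*}$ to $A(t)$ and extracting the tangential part at $t=0$ produces a $6\times 6$ linear system in $a$, whose invertibility for every $\xi\neq 0$ I would verify directly.

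For \textbf{self-adjointness}, Green's identity reads
$$\langle \Delta\alpha,\beta\rangle - \langle\alpha,\Delta\beta\rangle = \int_{\partial M}\bigl[d^{*}\alpha\wedge *\beta - d^{*}\beta\wedge *\alpha + \alpha\wedge *d\beta - \beta\wedge *d\alpha\bigr].$$
The first two integrands vanish upon pull-back to $\partial M$ because $d^{*}\alpha\vert_{\partial M}=d^{*}\beta\vert_{\partial M}=0$. For the remaining pair, Lemma 1 gives $\alpha\vert_{\partial M}=-\chi_{6}(a)$ and $\beta\vert_{\partial M}=-\chi_{6}(b)$, and Proposition 1 supplies $d_{1}\alpha=0$ and $d_{7}\alpha=\squart\chi\circ d^{*}\alpha$. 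Combined with the boundary vanishing of $d^{*}\alpha$ and $d^{*}\beta$, these identities should control the boundary behaviour of $d\alpha,d\beta$ well enough to render $\int_{\partial M}[\alpha\wedge *d\beta - \beta\wedge *d\alpha]$ symmetric in $(a,b)$, and hence zero.

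For the \textbf{preservation of $d^{*}\alpha = 0$}, set $u=d^{*}\alpha\in\Omega^{1}(M)$. Then $\Delta u=d^{*}\rho=0$, $d^{*}u=(d^{*})^{2}\alpha=0$ identically, and $u\vert_{\partial M}=0$ by hypothesis. Green's identity for $u$ gives
$$ 0=\langle\Delta u,u\rangle = \|du\|^{2}+\|d^{*}u\|^{2}-\int_{\partial M}\bigl[u\wedge *du - d^{*}u\wedge *u\bigr].$$
Both boundary integrands vanish ($u\vert_{\partial M}=0$ handles the first, $d^{*}u\equiv 0$ the second), so $du=0$, placing $u$ in ${\cal H}^{1}$. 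But by item 4 of Proposition 2, $u=d^{*}\alpha$ is $L^{2}$-orthogonal to ${\cal H}^{1}$; hence $u=0$.

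I expect the technical heart to be the explicit Shapiro-Lopatinski verification, which requires carefully tracking how the $G_{2}$-decomposition of $\Lambda^{2}$ interacts with the boundary symbol of $d^{*}$ on the admissible $6$-parameter family and checking invertibility at every nonzero $\xi$. The self-adjointness boundary-term cancellation has a similar flavour and should turn on the same algebraic identities; the third part, by contrast, is essentially formal once the Hodge theory of Proposition 2 is in hand.
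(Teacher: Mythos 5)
Your three-part architecture is reasonable, and your third part is essentially complete: setting $u=d^{*}\alpha$, showing $du=0$ by the vanishing of the boundary terms, and then invoking item 4 of Proposition 2 to get $u\perp {\cal H}^{1}$ is exactly the content of the first item of Proposition 2, which the paper simply cites at this point. But the two decisive steps of the theorem are deferred rather than proved. For ellipticity, your Lopatinski--Shapiro reduction (the $14$-dimensional decaying family, cut to the $6$-parameter family $A_{0}=2a\wedge\nu^{*}-\chi_{6}(a)$ by $\alpha\Vert_{\partial,8}=0$, then a $6\times 6$ system from the tangential part of $d^{*}$) is the right setup and is equivalent to the paper's reduction to the boundary operator $P$; however, "whose invertibility I would verify directly" is precisely where the proof lives. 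The paper's Lemmas 3 and 4 show the relevant symbol is $2\vert\xi\vert\,\mathrm{Id}-\Sigma_{\xi}$, where $\Sigma_{\xi}$, the symbol of $a\mapsto d^{*}_{\partial M}\chi_{6}(a)$, has eigenvalues $0,\pm\vert\xi\vert$; invertibility holds only because the coefficient $2$ in Lemma 1 exceeds $1$, and the paper's remark that the analogous problem on $\Omega^{2}_{7}$ (coefficient $1$) is \emph{not} elliptic shows this is not a formality one can wave through.

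The more serious gap is in self-adjointness. After the $d^{*}$-terms are killed by the boundary condition, your cited identities ($d_{1}\alpha=0$, $d_{7}\alpha=\frac{1}{4}\chi(d^{*}\alpha)$, $d^{*}\beta\vert_{\partial M}=0$) only reduce the remaining boundary term to $\int_{\partial M}\iota^{*}(\alpha\wedge *d_{27}\beta)-\iota^{*}(\beta\wedge *d_{27}\alpha)$, and this integrand involves the full $1$-jet of $\alpha,\beta$ at the boundary (in particular normal derivatives of their $\Lambda^{2,\partial}_{8}$-components), not just the boundary $1$-forms $a,b$; it is not symmetric pointwise, so no amount of pointwise "control of the boundary behaviour of $d\alpha,d\beta$" will make the integral symmetric --- a genuinely different mechanism is needed. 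The paper supplies it by an indirect variational argument: write $\Delta=L+\frac{3}{2}d_{14}d^{*}$ on $\Omega^{2}_{14}$; the $d_{14}d^{*}$ piece is symmetric because $d^{*}\alpha\vert_{\partial M}=d^{*}\beta\vert_{\partial M}=0$, while $\langle L\alpha,\beta\rangle$ is symmetrised by adding $\Omega^{2}_{7}$ corrections $\alpha',\beta'$ so that $\alpha+\alpha'$ and $\beta+\beta'$ vanish on $\partial M$ and then identifying $\langle L(\alpha+\alpha'),\beta+\beta'\rangle$ with the manifestly symmetric form $\langle\ ,\ \rangle_{Q}$ (equivalently, the boundary pairing of Proposition 5). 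Without this idea, or some equivalent integration-by-parts scheme, your self-adjointness step does not go through as written.
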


The statement that this is a self-adjoint elliptic boundary value problem has the following standard consequences. Define
\begin{equation} \tilde{H}= \{\alpha\in \Omega^{2}_{14}: \Delta\alpha=0, \alpha\Vert_{\partial, 8}=0, d^{*}\alpha\vert_{\partial M}=0\}. \end{equation}
Then
\begin{enumerate}
\item $\tilde{H}$ is finite dimensional;
\item a solution to the boundary value problem in Theorem 1 exists if and only if $\rho$ is $L^{2}$-orthogonal to $\tilde{H}$;
\item in such a case we have elliptic estimates
$$  \Vert \alpha\Vert_{L^{2}_{k}}\leq C_{k} \Vert \rho \Vert_{L^{2}_{k-2}}. $$
\end{enumerate}

The proof of Theorem 1 extends across the next few pages, including Lemmas 3 and 4. 

There is a standard definition of an elliptic boundary value problem (see \cite{kn:T}, Chapter 5, for example) but the general  theory is somewhat complicated and we do not need much of it here.
We take as known the theory of the Dirichlet problem for the Laplace operator on $\Omega^{2}_{14}$ and, for simplicity, we assume initially that the only solution $\alpha\in \Omega^{2}_{14}$ of $\Delta\alpha=0$ with $\alpha\Vert_{\partial M}=0$ is $\alpha=0$. Then, by the standard theory, for any $\rho\in \Omega^{2}_{14}$ and section $\theta$ of the restriction of $\Lambda^{2}_{14}$ to $\partial M$ there is a unique solution $\alpha=\alpha(\rho,\theta)$ of the equation $\Delta\alpha=\rho$ with $\alpha\Vert_{\partial M}= \theta$. Now consider a $1$-form $a$ on $\partial M$ and set $\theta(a)= 2 a\wedge \nu^{*}- \chi_{6}(a)$ as in Lemma 1. The boundary value problem in Theorem 1 then becomes an equation for $a$:
$$    d^{*}\alpha(\rho,\theta(a))\vert_{\partial M}=0. $$
Write $d^{*}\alpha(\rho,0)\vert_{\partial M}= -\sigma$ and let 
$$  P: \Omega^{1}(\partial M)\rightarrow \Omega^{1}(\partial M)$$ be the operator which maps $a$ to $d^{*}\alpha(\theta(a),0)\vert_{\partial M}$. Then  the equation to solve for $a$ is \begin{equation}P(a)= \sigma. \end{equation} The claim that the boundary value problem is elliptic is equivalent to the claim that $P$ is an elliptic pseudo-differential operator of order $1$ on $\partial M$. When this holds the equation $(22)$ can be solved for $a$ provided that $\sigma$ lies in a subspace of finite codimension and the solution $a$ is unique up to a finite dimensional kernel. These facts imply the corresponding statements about the boundary value problem. The simplifying assumption on the solubility of the Dirichlet problem  is unnecessary since the whole discussion can take place modulo finite dimensional subspaces. From another point of view we can run the arguments above in a  model  \lq\lq flat''  case (as in the  proof of Lemma 3 below) and use the solution there to construct a parametrix for our boundary value problem.

\

The ellipticity of the operator $P$ is a  condition  on the symbol and this symbol can be described as follows. 
\begin{lem}
Define an operator $\tilde{P}:\Omega^{1}(\partial M)\rightarrow \Omega^{1}(\partial M)$ by 
$$  \tilde{P}(a)=  2 \Delta^{1/2} a - d^{*}_{\partial M} \chi_{6}(a). $$
Then $P$ and $\tilde{P}$ have the same symbol.
\end{lem}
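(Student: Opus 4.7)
The plan is a symbol computation carried out in a flat local model at a boundary point. The symbol of $P$ is a pointwise object depending only on leading-order local behaviour, so at $p\in\partial M$ I freeze coefficients and replace $(M,\phi)$ near $p$ by the half-space $\{t\le 0\}\subset\bR^{7}$ with its standard flat $G_{2}$-structure and outward normal $\partial_{t}$, so $\nu^{*}=dt$. In this model the decomposition $\Lambda^{2}=\Lambda^{2}_{7}\oplus\Lambda^{2}_{14}$ and the bundle isomorphism $\chi_{6}$ are parallel; in particular the flat Laplacian on $\Omega^{2}$ preserves $\Omega^{2}_{14}$ and acts componentwise on constant-coefficient forms.

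Fix a covector $\xi\in T^{*}_{p}\partial M$ and test $P$ on $a=a_{0}e^{ix\cdot\xi}$ for a constant $a_{0}\in T^{*}_{p}\partial M$. By Lemma 1 the corresponding Dirichlet datum is $\theta(a)=(2a_{0}\wedge dt-\chi_{6}(a_{0}))\,e^{ix\cdot\xi}$, which lies in $\Lambda^{2}_{14}$ at every $t$. The unique bounded solution on $\{t\le 0\}$ of $\Delta\alpha=0$ with this boundary value is the coefficient-wise Poisson extension
$$\alpha(x,t)=e^{|\xi|t}\bigl(2a_{0}\wedge dt-\chi_{6}(a_{0})\bigr)e^{ix\cdot\xi},$$
which automatically lies in $\Omega^{2}_{14}$. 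Splitting $\alpha=\sigma+\eta\wedge dt$ with tangential $\sigma=-\chi_{6}(a_{0})e^{|\xi|t}e^{ix\cdot\xi}$ and $\eta=2a_{0}e^{|\xi|t}e^{ix\cdot\xi}$, a routine decomposition of the codifferential along the normal direction gives
$$d^{*}\alpha=(d^{*}_{x}\sigma+\partial_{t}\eta)+(d^{*}_{x}\eta)\,dt,$$
whose pull-back to $\partial M$ is the first bracket.

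Using the symbols $\sigma(d^{*}_{\partial M})=-i\,\iota_{\xi}$ (with $\iota_{\xi}$ meaning contraction by the metric dual of $\xi$) and $\partial_{t}e^{|\xi|t}\big|_{t=0}=|\xi|$, evaluation at $t=0$ yields
$$P(a)(x)=\bigl(2|\xi|\,a_{0}+i\,\iota_{\xi}\chi_{6}(a_{0})\bigr)e^{ix\cdot\xi}$$
to leading order in $\xi$. On the other hand $\sigma(2\Delta^{1/2})=2|\xi|$ and $\sigma(d^{*}_{\partial M}\circ\chi_{6})=-i\,\iota_{\xi}\circ\chi_{6}$, so the principal symbol of $\tilde{P}$ is $a_{0}\mapsto 2|\xi|\,a_{0}+i\,\iota_{\xi}\chi_{6}(a_{0})$, matching $P$ term by term. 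I expect the main technical point to be the careful bookkeeping of signs (for $\nu^{*}$, for $\sigma(d^{*})$, and for the normal/tangential split of $d^{*}$) rather than anything conceptually deep; once the harmonic extension in the flat model has been written down, the rest is short symbol arithmetic.
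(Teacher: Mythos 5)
Your proof is correct and takes essentially the same route as the paper: freeze coefficients in the flat half-space model, harmonically extend the $\Lambda^{2}_{14}$-valued boundary datum $2a\wedge \nu^{*}-\chi_{6}(a)$, split $d^{*}$ into tangential and normal parts, and recognise the normal-derivative term as the Dirichlet-to-Neumann operator with symbol $\vert\xi\vert$ --- your explicit plane-wave Poisson extension simply makes the paper's appeal to the DtN symbol concrete. The sign bookkeeping ($\sigma(d^{*}_{\partial M})=-i\,\iota_{\xi}$, $\partial_{t}e^{\vert\xi\vert t}\vert_{t=0}=\vert\xi\vert$) agrees with the paper's formula $d^{*}\alpha= 2\frac{da_{t}}{dt} - d^{*}_{6}\Psi_{t} + 2 (d^{*}_{6}a_{t})\, dt$, so the two symbol computations match term by term.
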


\begin{proof}
To see this we can consider a situation where the geometry is locally flat, so we can work in $\bC^{3}\times (-\infty, 0]$ with boundary $\bC^{3}=\bC^{3}\times \{0\}$ and co-ordinate $t$ in the $\bR$ factor. We write a $2$-form  as 
$$\alpha= 2 a_{t}\wedge dt - \Psi_{t}$$
where $a_{t}, \Psi_{t}$ are respectively $t$-dependent $1$-forms and $2$-forms on $\bC^{3}$. Then 
$$   d^{*}\alpha= 2 \frac{da_{t}}{dt} - d^{*}_{6}\Psi_{t} + 2 (d^{*}_{6}a_{t}) dt, $$ where $d^{*}_{6}$ denotes the $d^{*}$ operator on $\bC^{3}$. 
In our situation we have $\alpha\in \Omega^{2}_{14}$ so $\Psi_{t}= \chi_{6}a_{t} + \Theta_{t}$, where $\Theta_{t}$ takes values in $\Lambda^{2}_{\partial,8}$ and $\Theta_{0}=0$.   Thus  the restriction of $d^{*}\alpha$ to the boundary is given by
$$ 2\frac{da_{t}}{dt}\mid_{t=0} - d^{*}_{6} (\chi_{6}(a_{0})), $$
where $a(t)$ is the harmonic extension of the given $1$-form $a$ on the boundary. Thus $\frac{da}{dt}\mid_{t=0}$ is obtained from $a_{0}$ by applying the \lq\lq Dirichlet-to-Neumann'' operator,  whose symbol is the same as that of $\Delta^{1/2}$, and this gives the statement in the Lemma. \qed \end{proof}

The symbol of $\Delta^{1/2}$ at a  cotangent vector $\xi$ on $\partial M$ is multiplication by $\vert \xi\vert$. Thus the ellipticity of our boundary value problem follows from the following statement.
\begin{lem} If $\xi$ is a unit cotangent vector on $\partial M$ the symbol of $a\mapsto d^{*}_{\partial M} \chi_{6}(a)$ at $\xi$ has eigenvalues $0,\pm 1$ \end{lem}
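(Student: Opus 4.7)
The plan is to reduce the claim, which is pointwise on $\partial M$, to an explicit matrix computation in the flat model of the $G_{2}$-structure at a boundary point.

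First, fix $p\in\partial M$ and choose a linear isometric identification $T_{p}M\cong\bR\oplus\bC^{3}$ putting the $G_{2}$-structure into the standard form $\phi=\omega\wedge dt+{\rm Im}(dz_{1}\,dz_{2}\,dz_{3})$, so that $T_{p}\partial M\cong\bC^{3}$ inherits the standard complex structure $J$. The subgroup $SU(3)\subset G_{2}$ stabilising $(\omega,\rho)$ acts transitively on unit covectors in $\bR^{6}$, so I may assume $\xi=dx_{1}$, with $\xi^{\sharp}=\partial/\partial x_{1}$ and $J\xi=dy_{1}$.

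Next I would write $\chi_{6}$ on the standard basis $\{dx_{a},dy_{a}\}$ via $\chi_{6}(a)=\iota_{v_{a}}\rho$, $\iota_{v_{a}}\omega=a$, and read off the principal symbol $\sigma(\xi)(a)=-i\,\iota_{\xi^{\sharp}}\chi_{6}(a)$; here the factor $-i$ is the standard pseudo-differential convention compatible with $\sigma(\Delta^{1/2})(\xi)=|\xi|$. Two structural features then determine the spectrum.

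\textbf{(i)} The $2$-plane $\mathrm{span}(\xi,J\xi)=\mathrm{span}(dx_{1},dy_{1})$ lies in the kernel of $\sigma(\xi)$. Conceptually this is because $\rho(\partial/\partial x_{1},\partial/\partial y_{1},\cdot)={\rm Im}\,\Omega(\partial/\partial x_{1},\partial/\partial y_{1},\cdot)$ with $\Omega=dz_{1}dz_{2}dz_{3}$, and the latter vanishes as the first two rows of the relevant $3\times 3$ determinant are proportional. Concretely, $\chi_{6}(dx_{1})=dy_{2}dy_{3}-dx_{2}dx_{3}$ and $\chi_{6}(dy_{1})=dx_{2}dy_{3}+dy_{2}dx_{3}$ both involve only the $z_{2},z_{3}$ coordinates, so contracting with $\partial/\partial x_{1}$ annihilates them.

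\textbf{(ii)} On the orthogonal complement $W=\mathrm{span}(dx_{2},dy_{2},dx_{3},dy_{3})\cong\bC^{2}$, a short calculation exhibits the $4\times 4$ block of $-\iota_{\xi^{\sharp}}\chi_{6}$ as a pair of $90^{\circ}$ rotations---of opposite orientations---in the $(dx_{2},dx_{3})$- and $(dy_{2},dy_{3})$-planes. In particular this block squares to $-I_{W}$, and so after multiplying by the symbol factor $i$ one obtains $\sigma(\xi)|_{W}^{2}=I_{W}$ and hence eigenvalues $\pm 1$, each with multiplicity $2$.

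Combining (i) and (ii), $\sigma(\xi)$ has spectrum $\{0,+1,-1\}$ with multiplicities $2,2,2$, as claimed, which via Lemma 3 finishes the proof that $\tilde{P}=2\Delta^{1/2}-d^{*}_{\partial M}\chi_{6}$ is elliptic. The only real obstacle is organisational: one must keep the factor of $i$ consistent between the symbol of $d^{*}$ and the normalisation $\sigma(\Delta^{1/2})(\xi)=|\xi|$, so that the nonzero eigenvalues come out as $\pm 1$ rather than $\pm i$. Once that convention is fixed, the whole argument reduces to expanding the explicit formula for $\rho$ in coordinates and reading off the $6\times 6$ matrix.
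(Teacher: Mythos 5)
Your proposal is correct and follows essentially the same route as the paper: reduce to the flat model $\bC^{3}$ with $\xi=dx_{1}$ by $SU(3)$-invariance, compute $\chi_{6}$ explicitly, and read off the symbol as (a multiple of) the contraction $\iota_{\xi^{\sharp}}\chi_{6}$, obtaining eigenvalues $0,\pm 1$ each with multiplicity two. Your packaging via the kernel plane $\mathrm{span}(dx_{1},dy_{1})$ and the two opposite $90^{\circ}$ rotations squaring to $-I$ is just a tidier way of exhibiting the same $6\times 6$ matrix the paper writes down.
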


\begin{proof} To prove this Lemma
we can compute in the flat model $\bC^{3}$ as in (4), with standard co-ordinates $z_{i}=x_{i}+ \sqrt{-1} y_{i}$,  with $\omega= dx_{1} dy_{1}+ dx_{2} dy_{2} + dx_{3}dy_{3}$ and with
$$ \rho= -dy_{1}dy_{2}dy_{3} +dy_{1}dx_{2}dx_{3}+ dx_{1}dy_{2}dx_{3}+ dx_{1}dx_{2}dy_{3}. $$
One finds from the definition that if $a= \sum_{i=1}^{3}\lambda_{i} dy_{i} + \mu_{i} dx_{i}$ then
$$ \chi_{6}(a)= \sum_{{\rm cyclic}} \mu_{i} (dy_{j} dy_{k}- dx_{j}dx_{k}) + \lambda_{i}(dy_{j} dx_{k}+ dx_{j}dy_{k}), $$
where the notation means that $(ijk)$ run over cyclic permutations of $(123)$. Since the symmetry group $SU(3)$ acts transitively on the unit sphere it suffices to check any given unit co-tangent vector $\xi$, so we take $\xi= dx_{1}$. In other words we have to pull out the $\partial_{1}=\frac{\partial}{\partial x_{1}}$ term in $d^{*}\chi_{6}(a)$. This is
$$   (\partial_{1} \mu_{3}) dx_{2} - (\partial_{1}\mu_{2}) dx_{3} +(\partial_{1}\lambda_{2}) dy_{3} - (\partial_{1} \lambda_{3}) dy_{2}. $$
To get the symbol we replace the derivative $\partial_{1}$ by multiplication by $\sqrt{-1}$. We see that the symbol at $dx_{1}$ is the linear map $\Sigma$ with

$$\begin{array}{ccc}  \Sigma(dx_{1})=0 & \Sigma(dx_{2})= -\sqrt{-1} dx_{3}& \Sigma(dx_{3})=\sqrt{-1} dx_{2}\\ \Sigma(dy_{1})=0& \Sigma(dy_{2})=\sqrt{-1}dy_{3}& \Sigma(dy_{3})=-\sqrt{-1} dy_{2}. \end{array}$$

This linear map $\Sigma$ has eigenvalues $0,1,-1$ (each with multiplicity two).\qed\end{proof}

\

\

Next we  establish the self-adjoint property. Let us denote the boundary conditions in the statement of Theorem 1 by $(BC)$. Recall that in general the adjoint boundary conditions $(BC)^{*}$ are defined by saying that $\beta\in \Omega^{2}_{14}$ satisfies $(BC)^{*}$ if and only if we have an equality of $L^{2}$ inner products
$\langle \Delta \alpha, \beta\rangle= \langle \alpha, \Delta \beta\rangle$ for all $\alpha$ satisfying $(BC)$. We want to show that the boundary conditions $(BC)^{*}$ are the same as $(BC)$. To do this it suffices, by a simple dimension-counting argument, to show that if $\alpha, \beta$ both satisfy $(BC)$ then  $\langle \Delta \alpha, \beta\rangle= \langle \alpha, \Delta \beta\rangle$. We can see this by an indirect argument using the operator $L$ given by (13).  First we claim that if
$\alpha, \beta\in \Omega^{2}_{14}$ satisfy $\alpha\Vert_{\partial, 8}=\beta\Vert_{\partial, 8}=0$ then $\langle L \alpha, \beta\rangle=\langle \alpha, L\beta\rangle$. Indeed choose $\alpha',\beta'\in \Omega^{2}_{7}$ such that $(\alpha+\alpha')\vert_{\partial M}, (\beta+\beta')\vert_{\partial M}$ vanish. Then
$$  \langle L\alpha, \beta\rangle= \langle L(\alpha +\alpha'), \beta+\beta'\rangle, $$
since $L$ vanishes on $\Omega^{2}_{7}$ and maps to $\Omega^{2}_{14}$. Let $\langle \ , \ \rangle_{Q}$ be the symmetric bilinear form associated to the quadratic form $Q$ on $\Omega^{2}_{M}$
\begin{equation}   \langle \gamma_{1}, \gamma_{2}\rangle_{Q}= \frac{4}{3} \langle d_{1} \gamma_{1}, d_{1}\gamma_{2}\rangle+ \langle d_{7}\gamma_{1}, d_{7}\gamma_{2}\rangle - \langle d_{27}\gamma_{1}, d_{27}\gamma_{2}\rangle. \end{equation}

Then {\it if $\gamma_{1}, \gamma_{2}$ vanish on $\partial M$} we have
\begin{equation} \langle \gamma_{1} \gamma_{2}\rangle_{Q}= \langle L\gamma_{1}, \gamma_{2}\rangle= \langle \gamma_{1} L\gamma_{2}\rangle. \end{equation}
We apply this to $\alpha+\alpha', \beta+\beta'$, which vanish on the boundary by construction, so  we have
$$  \langle L(\alpha+\alpha'), \beta+\beta'\rangle = \langle \alpha+\alpha', \beta+\beta'\rangle_{Q}$$ which is symmetric in $\alpha,\beta$. This we have shown that $\langle L\alpha, \beta\rangle$ is symmetric in $\alpha,\beta$. Now, using  Proposition 1 we can write
  $\Delta = L+ \sthreetwo d_{14} d^{*}$ on $\Omega^{2}_{14}$. If $\alpha, \beta$ satisfy the other part of $(BC)$ that is,  if  $d^{*}\alpha\vert_{\partial M}=d^{*}\beta\vert_{\partial M}=0$, then 
$$  \langle dd^{*}\alpha,\beta\rangle = \langle d^{*}\alpha, d^{*}\beta\rangle=\langle \alpha, dd^{*}\beta\rangle $$
since the relevant boundary terms vanish. This completes the proof of self-adjointness.

\

The last statement in Theorem 1 (that $d^{*}\rho=0$ implies $d^{*}\alpha=0$) is a particular case of the first item in Proposition 2. This completes the proof of Theorem 1. 

%To complete the proof of Theorem * we show that if $\rho$ satisfies $d^{*}\rho=0$ %then a solution $\alpha$ has $d^{*}\alpha=0$. This straightforward integration %by parts. Since $d^{*}\Delta \alpha= d^{*}dd^{\alpha}$ so $$ \langle d^{*}dd^{*}\alpha, %d^{*}\alpha\rangle=0$. But
%$$      \langle d^{*}dd^{*}\alpha, d\alpha\rangle= \Vert dd^{*}\alpha\Vert^{2}+ %\int_\partial M} * dd^{*}\alpha \wedge d^{*}\alpha, $$
%and the boundary condition implies that the boundary term vanishes, so $dd^{*}\alpha=0$. %Then 
%$$  0=\langle dd^{*}\alpha, \alpha\rangle= \Vert d^{*}\alpha\Vert^{2}+ \int_{\partial %M} d^{*}\alpha \wedge * \alpha, $$
%and again the boundary term vanishes, so $d^{*}\alpha=0$.

\

 The  operator $L$ and the symmetric form $\langle\ ,\ \rangle_{Q}$ are related by  a boundary term. For $\alpha\in \Omega^{2}_{14}$ with $\alpha\Vert_{\partial,8}=0$ we define a $1$-form on $\partial M$:
$$\alpha\Vert_{\partial,6}= \chi_{6}^{-1}(\alpha\vert_{\partial M}).$$ 
Recall also that we have a $2$-form $\omega$ on $\partial M$ given by the contraction of $\phi$ with the normal vector.

\begin{prop}
If $\alpha_{14}, \beta_{14}$ are in $ \Omega^{2}_{14}$ with $\alpha_{14}\Vert_{\partial, 8}, \beta_{14}\Vert_{\partial, 8}=0$ then 
$$  \langle L\alpha_{14},\beta_{14}\rangle= \langle \alpha_{14}, \beta_{14}\rangle_{Q} + \langle \alpha_{14}\Vert_{\partial, 6}, \beta_{14}\Vert_{\partial,6}\rangle_{\partial} $$
where, for $1$-forms $a,b$ on $\partial M$,
$$   \langle a, b\rangle_{\partial} = - \int_{\partial M} \chi_{6}(a)\wedge d(b \wedge \omega). $$

\end{prop}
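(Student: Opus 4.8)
The plan is to compute $\langle L\alpha_{14},\beta_{14}\rangle$ by running the integration-by-parts arguments that produced the ``primary'' pointwise formula (14)--(15), but this time keeping track of the boundary terms that were discarded when $\gamma_1,\gamma_2$ were assumed to vanish on $\partial M$. Recall from (13) that $L(\alpha_{14}) = \sfourthree d^*d_1\alpha_{14} + d^* d_7 \alpha_{14} - d^* d_{27}\alpha_{14}$, and that by Proposition 2 the $\Omega^3_1$ component of $d\alpha_{14}$ vanishes, so in fact only the $d_7$ and $d_{27}$ terms survive. So I would write
$$ \langle L\alpha_{14},\beta_{14}\rangle = \langle d^* d_7\alpha_{14},\beta_{14}\rangle - \langle d^* d_{27}\alpha_{14},\beta_{14}\rangle $$
and move each $d^*$ back across using Stokes with boundary term: $\langle d^*\mu,\nu\rangle = \langle \mu, d\nu\rangle - \int_{\partial M}\langle i_\nu\mu, \nu|_{\partial M}\rangle$-type identities (expressed invariantly, $\int_{\partial M} \nu|_{\partial M}\wedge *\mu$). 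Since $\beta_{14}\Vert_{\partial,8}=0$ but $\beta_{14}|_{\partial M}$ need \emph{not} vanish — its $\Lambda^{2,\partial}_6$-part is $\chi_6(\beta_{14}\Vert_{\partial,6})$ and there may be a $\Lambda^{2,\partial}_1$ contribution too — the boundary term is genuinely present and is what must be organized into the claimed expression $\langle \alpha_{14}\Vert_{\partial,6},\beta_{14}\Vert_{\partial,6}\rangle_\partial$.

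Concretely I would proceed as follows. First, as in the proof of self-adjointness in Theorem 1, replace $\alpha_{14},\beta_{14}$ by $\alpha = \alpha_{14}+\alpha_7$, $\beta = \beta_{14}+\beta_7$ with $\alpha_7,\beta_7\in\Omega^2_7$ chosen so that $\alpha|_{\partial M}$, $\beta|_{\partial M}$ vanish \emph{except} along the $\Lambda^{2,\partial}_6$ directions — actually it is cleaner to choose $\alpha_7$ so that $(\alpha_{14}+\alpha_7)|_{\partial M}$ vanishes entirely (possible by Lemma 1 once $\alpha_{14}\Vert_{\partial,8}=0$, since Lemma 1 item (1) shows $\Lambda^2_7$ surjects onto $\Lambda^{2,\partial}_1\oplus\{a\wedge\nu^*+\chi_6(a)\}$ and in particular onto all of $\Lambda^{2,\partial}_6\oplus\Lambda^{2,\partial}_1$ in the boundary restriction). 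Then $\langle L\alpha_{14},\beta_{14}\rangle = \langle L\alpha,\beta\rangle$ as in Theorem 1's proof, and the latter equals $\langle\alpha,\beta\rangle_Q$ \emph{with no boundary term} since $\alpha,\beta$ vanish on $\partial M$. So the whole content is to compare $\langle\alpha,\beta\rangle_Q$ with $\langle\alpha_{14},\beta_{14}\rangle_Q$: the difference
$$ \langle\alpha,\beta\rangle_Q - \langle\alpha_{14},\beta_{14}\rangle_Q = \sfourthree\langle d_1\alpha,d_1\beta\rangle + \big(\langle d_7\alpha,d_7\beta\rangle - \langle d_7\alpha_{14},d_7\beta_{14}\rangle\big) - \big(\langle d_{27}\alpha,d_{27}\beta\rangle - \langle d_{27}\alpha_{14},d_{27}\beta_{14}\rangle\big) $$
must be shown to equal $\langle\alpha_{14}\Vert_{\partial,6},\beta_{14}\Vert_{\partial,6}\rangle_\partial$. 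Here the $d\alpha_7, d\beta_7$ contributions are controlled by Proposition 1 (items 1--2): $d_1\alpha_7 = 0$, $d_7\alpha_7 = -\shalf\chi d^*\alpha_7$, and only $d_{27}\alpha_7$ is genuinely new. Integrating by parts back, all the bulk terms should cancel against the corresponding pieces of $L$, leaving a pure boundary integral; the remaining task is to identify that integral, using that $\alpha_7|_{\partial M} = -\alpha_{14}|_{\partial M}$ has $\Lambda^{2,\partial}_6$-part $-\chi_6(\alpha_{14}\Vert_{\partial,6})$.

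The cleanest route to \emph{identify} the boundary integral is probably the local flat-model computation used in Lemma 3: work in $\bC^3\times(-\infty,0]$ with coordinate $t$, write $\alpha_{14} = 2a_t\wedge dt - \chi_6(a_t) - \Theta_t$ with $\Theta_0 = 0$ (exactly the normal form from the proof of Lemma 3), do the same for $\beta_{14}$, and expand $\langle L\alpha_{14},\beta_{14}\rangle - \langle\alpha_{14},\beta_{14}\rangle_Q$ as an integral of a total $t$-derivative, evaluated at $t=0$. The result will be a quadratic expression in $a_0, b_0$ (the boundary $1$-forms $\alpha_{14}\Vert_{\partial,6}$, $\beta_{14}\Vert_{\partial,6}$), and one checks it coincides with $-\int_{\partial M}\chi_6(a_0)\wedge d(b_0\wedge\omega)$. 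Since $\omega$, $\rho$, $\chi_6$ are all $SU(3)$-covariant, it suffices to verify the pointwise algebraic identity of the two quadratic forms in the explicit model (4) with the explicit formula for $\chi_6$ from the proof of Lemma 4 — this is a finite linear-algebra check.

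\textbf{Main obstacle.} The technical heart — and the place where sign and normalization errors are most likely — is the bookkeeping of boundary terms when moving $d^*$ across in $\langle d^* d_7\alpha_{14},\beta_{14}\rangle$ and $\langle d^*d_{27}\alpha_{14},\beta_{14}\rangle$, since $\beta_{14}$ does not vanish on $\partial M$ and one must correctly express $\int_{\partial M}$-terms involving the $t$-normal derivative of $\alpha_{14}$ in terms of the tangential data only. Using the flat-model normal form of Lemma 3 (where the normal derivative of the harmonic extension is controlled and $\Theta_0 = 0$ kills the $\Lambda^{2,\partial}_8$ part) is what makes this tractable; the coefficient $-1$ and the particular combination $\chi_6(a)\wedge d(b\wedge\omega)$ then emerge from matching against the $d_7$ versus $d_{27}$ weights $+1$ and $-1$ in $Q$ together with the factor $\vert\chi_6(a)\vert^2 = 4\vert a\vert^2$-type normalizations already recorded in Proposition 1.
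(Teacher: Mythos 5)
Your opening reduction does coincide with the paper's: choose $\alpha_{7},\beta_{7}\in\Omega^{2}_{7}$ so that $\alpha=\alpha_{14}+\alpha_{7}$ and $\beta=\beta_{14}+\beta_{7}$ vanish on $\partial M$, observe $\langle L\alpha_{14},\beta_{14}\rangle=\langle L\alpha,\beta\rangle=\langle\alpha,\beta\rangle_{Q}$, and so reduce to identifying $\langle\alpha,\beta\rangle_{Q}-\langle\alpha_{14},\beta_{14}\rangle_{Q}$ as the boundary pairing. But that identification is the whole content of the Proposition, and there your plan has a genuine gap. Two of the facts you lean on are not available: Proposition 1 gives $d_{1}=0$ on $\Omega^{2}_{14}$, not on $\Omega^{2}_{7}$ (for $\alpha_{7}=i_{v}\phi$ one has $d\alpha_{7}={\cal L}_{v}\phi$, whose $\Lambda^{3}_{1}$-component is nonzero in general --- a radial field in the flat model gives ${\cal L}_{v}\phi=3\phi$), and the appeal to the harmonic-extension normal form from Lemma 3 is inapposite: here $\alpha_{14},\beta_{14}$ are arbitrary, their normal derivatives on $\partial M$ are not determined by the boundary data, and the a priori Stokes boundary term $\int_{\partial M}\alpha_{14}\wedge *S(d\beta_{14})$ (with $S$ the map $\sfourthree,+1,-1$ on $\Lambda^{3}_{1},\Lambda^{3}_{7},\Lambda^{3}_{27}$) does contain them; that these contributions drop out is precisely what must be proved, and you assert it (``all the bulk terms should cancel'') rather than show it. More fundamentally, verifying the residual identity only in the flat model $\bC^{3}\times(-\infty,0]$ with flat boundary cannot pin down the stated pairing: on a general boundary $\omega=i_{\nu}\phi$ is not closed, and $\langle a,b\rangle_{\partial}=-\int_{\partial M}\chi_{6}(a)\wedge d(b\wedge\omega)$ contains the zeroth-order piece $\chi_{6}(a)\wedge b\wedge d\omega$, which vanishes identically in the flat model; so an $SU(3)$-covariance plus linear-algebra check there cannot distinguish $d(b\wedge\omega)$ from $db\wedge\omega$, i.e.\ it misses exactly the extrinsic-geometry contributions the formula is claimed to capture.

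The paper closes this gap with one specific device that your outline lacks. After polarising to $\beta_{14}=\alpha_{14}$, the correction term is $\langle\alpha_{7},\alpha_{7}\rangle_{Q}=\langle d\alpha_{7},S\,d\alpha_{7}\rangle$; writing $\alpha_{7}=i_{v}\phi$, diffeomorphism invariance of $\Theta$ gives $*S(d\alpha_{7})={\cal L}_{v}(*\phi)=d(i_{v}*\phi)$, so $\langle\alpha_{7},\alpha_{7}\rangle_{Q}=\int_{M}d\alpha_{7}\wedge d(i_{v}*\phi)$ is the integral of an exact form and hence manifestly a boundary integral, with no uncontrolled bulk or normal-derivative terms. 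On the boundary $(\alpha_{7}+\alpha_{14})\vert_{\partial M}=0$ forces $v$ tangential, whence $i_{v}(*\phi)=\shalf i_{v}(\omega^{2})=-a\wedge\omega$ and $\alpha_{14}\vert_{\partial M}=\chi_{6}(a)$ with $a=\alpha_{14}\Vert_{\partial,6}$, which produces exactly the integrand $-\chi_{6}(a)\wedge d(a\wedge\omega)$, curvature terms included. Without this (or some equivalent invariant computation of the boundary term valid on a curved boundary), your proposal does not reach the stated formula.
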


It follows from this Proposition that, given $\rho\in \Omega^{2}_{14}$ with $d^{*}\rho=0$,  our linear boundary value problem is the Euler-Lagrange equation associated to the   functional on $A$ given by 
\begin{equation}  -\Vert d\alpha\Vert^{2}+ \langle \alpha\Vert_{\partial, 6}, \alpha\Vert_{\partial,6}\rangle_{\partial} - \langle \rho, \alpha \rangle. \end{equation}

\
\begin{proof}
We give a derivation of Proposition 5 although we will not use the result, so this is a digression from our main path. First, one can check that  $\langle\ ,\ \rangle_{\partial}$ is symmetric, so by polarisation it suffices to prove the formula for $\beta_{14}=\alpha_{14}$. As before, choose $\alpha_{7}$ so that $\alpha=\alpha_{14}+\alpha_{7}$ vanishes on the boundary.  It follows then by integration by parts that
$$   \langle \alpha,\alpha_{7}\rangle_{Q}= \langle \alpha, L\alpha_{7}\rangle =0$$
and
$$  \langle \alpha,\alpha_{14}\rangle_{Q}= \langle \alpha, L\alpha_{14}\rangle= \langle \alpha_{14}, L\alpha_{14}\rangle. $$
Thus
$$   \langle \alpha_{14},\alpha_{7}\rangle_{Q}=  - \langle \alpha_{7},\alpha_{7}\rangle_{Q} $$ and 
$$    \langle \alpha_{14},\alpha_{14}\rangle_{Q}= \langle \alpha_{14}, L\alpha_{14}\rangle +  \langle \alpha_{7},\alpha_{7}\rangle_{Q}. $$

Let $v$ be the vector field on $M$ such that $\alpha_{7}=i_{v}(\phi)$ and let $S:\Lambda^{3}\rightarrow \Lambda^{3}$ be the bundle map equal to $\sfourthree, +1, -1$ on the factors $\Lambda^{3}_{1}, \Lambda^{3}_{7}, \Lambda^{3}_{27}$ respectively. Thus by (10) the first variation of $\Theta(\phi)$ for a variation $\delta\phi$ in $\phi$ is $*S(\delta \phi)$. Take $\delta\phi=d\alpha_{7}= {\cal L}_{v}\phi$, so that by diffeomorphism invariance of the constructions $*S(d\alpha_{7})= {\cal L}_{v}(*\phi)= d i_{v}(*\phi)$. Now
$$  \langle \alpha_{7}, \alpha_{7}\rangle_{Q}= \langle d\alpha_{7}, S(d\alpha_{7})\rangle = \int_{M} d\alpha_{7}\wedge d(i_{v}(*\phi)), $$
and we can write this as a boundary integral
$$  \int_{\partial M} \alpha_{7}\wedge d (i_{v} *\phi). $$
On the boundary the assumption that $(\alpha_{7}+\alpha_{14})\vert_{\partial M}=0$ implies that $v$ is tangential to $\partial M$. Then 
$$ i_{v}(*\phi) = \shalf i_{v} (\omega^{2}) = -a\wedge \omega, $$  
where $a=-i_{v}(\omega)$. Thus, on the boundary $\alpha_{7}= -a \wedge \nu^{*} - \chi_{6}(a)$ and $\alpha_{14}\vert_{\partial M}= \chi_{6}(a)$, so $a=\alpha_{14}\Vert_{\partial,6}$ and the boundary term is the integral of $ - \chi_{6}(a) \wedge d(a\wedge \omega)$ as required.\qed\end{proof}

 \

{\bf Remarks}

\begin{itemize}
\item The ellipticity of our boundary value problem depends crucially on the factor $2$ appearing in the decomposition of $\Lambda^{2}_{14}$ in Lemma 1;  more precisely that this factor is not $\pm 1$. By contrast there is a similar-looking boundary value problem for forms in $\Omega^{2}_{7}$ which is {\it not} elliptic. For this we consider the equation $\Delta\gamma=0$ for $\gamma \in \Omega^{2}_{7}$ with boundary condition, in the obvious notation, $d^{*}\gamma\vert_{\partial M}=0$ and $\gamma\Vert_{\partial, 1}=0$. Under the identification between $\Omega^{2}_{7}$ and $\Omega^{1}$, the solutions correspond to $1$-forms $\eta$ on $M$ with $d_{7}\eta=0, d^{*}\eta=0$ and with $i_{\nu}(\eta)=0$ on $\partial M$. This is the gauge-fixed, abelian, \lq\lq $G_{2}$-instanton'' equation and the space of solutions is infinite-dimensional. 

\item By standard theory, there is a complete set of eigenfunctions associated to our problem {\it i.e.} solutions of $-\Delta \alpha= \lambda \alpha$ satisfying $(BC)$. The  spectrum is discrete and bounded above so there are only finitely many positive eigenvalues.  This 1-sided boundedness can be seen by considering the 1-parameter family of product metrics on $M\times S^{1}$ with the length of the $S^{1}$-factor equal to $\kappa$, lying in the interval $[2\pi,4\pi]$ say. We consider  sections of the bundle $\pi^{*}(\Lambda^{2}_{14})$ lifted by the projection $\pi:M\times S^{1}\rightarrow M$.  There is an obvious way to lift the boundary conditions $(BC)$ to $M\times S^{1}$ and we consider the operator 
$$  -\Delta_{M\times S^{1}} =-\Delta_{M} + \left(\frac{d}{d\theta}\right)^{2}, $$ with these lifted boundary conditions.  The same discussion as before shows that this is an elliptic boundary value problem. The crucial fact is that the eigenvalues of the symbol $\Sigma$ in the proof of Lemma 3 have modulus  {\it less than} $2$. If 
 the spectrum of our original problem is not bounded above there are  eigenfunctions $\alpha_{i}$ with
eigenvalues $\lambda_{i}\rightarrow \infty$. For all large $i$ we can choose parameter values $\kappa_{i}$ such that $\sqrt{\lambda_{i}}=0 {\rm mod}\ \kappa_{i}$
 Then the sections
$$ \tilde{\alpha}_{i}= \alpha_{i} \cos( \sqrt{\lambda_{i}}\ \theta)$$ satisfy $\Delta_{M\times S^{1}} \tilde{\alpha}_{i}=0$, and this plainly contradicts the elliptic estimate on $M\times S^{1}$, which holds uniformly for parameter values $\kappa\in [2\pi,4\pi]$. 

\end{itemize}

\section{The nonlinear problem}
\subsection{Gauge fixing}

So far in this paper the connection between the linear theory and the deformations of $G_{2}$-structures has only been made at a formal level. We now correct this and develop the full nonlinear theory. There are two aspects to the nonlinearity, the first involving the action of the diffeomorphism group and the second involving the nonlinear nature of the torsion-free condition. For the first, happily, we are able to refer to the careful treatment of Fine, Lotay and Singer in \cite{kn:FLS}. This treats a 4-dimensional problem, but the proofs, extending the well-known results of Ebin for closed manifolds,  go over immediately to our situation.

 The standard approach to constructing a slice for the action of the diffeomorphism group on some space of tensors is to consider, at a tensor $\tau$, the variations $\delta\tau$ which are $L^{2}$ orthogonal to the Lie derivatives $L_{v}\tau$, for all vector fields $\tau$. 
 Under the identification of tangent vectors with $\Lambda^{2}_{7}$ the Lie derivatives of the closed 3-form $\phi$ are the image of $d:\Omega^{2}_{7}\rightarrow \Omega^{3}$. So in our case the standard slice for the diffeomorphism group action is given by variations $\delta \phi$ with $\pi_{7}(d^{*}(\delta\phi))=0$, where $\pi_{7}$ is the projection to $\Lambda^{2}_{7}$. As usual, it is convenient to work with Banach spaces and following \cite{kn:FLS} we will use Sobolev spaces, although H\"older spaces would work just as well. We fix some suitably large $s$, say $s=5$,  and consider the set of maps from $M$ to $M$ which are equal to the identity on the boundary and with $s+1$ derivatives in $L^{2}$.
Such maps are $C^{1}$ and it makes sense to consider diffeomorphisms of this class. These diffeomorphisms form a topological group and we define ${\cal G}^{s+1}$ to be the identity component of these $L^{2}_{s+1}$ diffeomorphisms. The group ${\cal G}^{s+1}$ acts on the space of $L^{2}_{s}$ $3$-forms on $M$. With our choice of the Sobolev index $s$ these forms are also $C^{1}$. In particular the notion of positive $3$-form makes sense.
For $\delta>0$ let $$S_{\delta}= \{ \phi+ \chi: \Vert \chi\Vert_{L^{2}_{s}}<\delta, \pi_{7}d^{*}\chi=0\}. $$
\begin{prop} There are constants $\epsilon,\delta>0$ such that for every 3-form $\tilde{\phi}$ with $\Vert \tilde{\phi}-\phi\Vert_{L^{2}_{s}}<\epsilon
$ there is a unique diffeomorphism $f\in {\cal G}^{s+1}$  such that
$f^{*}(\tilde{\phi})$ lies in $S_{\delta}$. 
\end{prop}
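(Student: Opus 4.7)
The plan is a standard slice-theorem argument via the implicit function theorem on Banach manifolds, following Ebin's original treatment in the closed case and the boundary version of \cite{kn:FLS}. We parametrize $\mathcal{G}^{s+1}$ near the identity by vector fields vanishing on $\partial M$, then apply the implicit function theorem to a gauge-fixing map whose zero set realizes the slice $S_\delta$.

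Fix a smooth auxiliary Riemannian metric on $M$ and let $\exp$ denote its exponential map. For $v\in L^2_{s+1}(TM)$ with $v|_{\partial M}=0$ and small norm, $f_v(x):=\exp_x(v(x))$ belongs to $\mathcal{G}^{s+1}$, and $v\mapsto f_v$ provides a local chart for $\mathcal{G}^{s+1}$ at the identity. Define
$$
F(v,\tilde\phi) \; = \; \pi_7\, d^*\bigl(f_v^*\tilde\phi - \phi\bigr),
$$
a smooth map from a neighborhood of $(0,\phi)$ in $\{v\in L^2_{s+1}(TM):v|_{\partial M}=0\}\times L^2_s(\Lambda^3 T^*M)$ into $L^2_{s-1}(\Lambda^2_7)$. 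Smoothness is the usual Sobolev composition calculus, with $s=5$ safely above the $C^1$ threshold. By construction $F(v,\tilde\phi)=0$ is precisely the slice condition on $\chi:=f_v^*\tilde\phi-\phi$, so it suffices to solve $F=0$ uniquely for $v$ in terms of $\tilde\phi$; the norm bound coming from the implicit function theorem will then allow us to choose $\delta$ so that $\|\chi\|_{L^2_s}<\delta$.

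The derivative of $F$ in $v$ at $(0,\phi)$ is
$$
Pv \; = \; \pi_7\, d^*(\mathcal{L}_v\phi) \; = \; \pi_7\, d^*\,d(i_v\phi).
$$
Under the bundle isomorphism $v\mapsto i_v\phi:TM\to\Lambda^2_7$ this becomes a second-order operator $\alpha\mapsto\pi_7\, d^*d\alpha$ on $\Omega^2_7$. Writing $d^*d=\Delta-dd^*$, its principal symbol is (a positive multiple of) $|\xi|^2 I-\pi_7(\xi\wedge i_\xi)$; since in the flat $G_2$-model one checks $\Omega^2_7\cap(\xi\wedge\Lambda^1)=\{0\}$ for $\xi\neq 0$, the second piece has operator norm strictly less than $|\xi|^2$ on $\Omega^2_7$, so $P$ is elliptic. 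The Dirichlet condition $v|_{\partial M}=0$ makes $P$ a self-adjoint Fredholm boundary value problem. For kernel-triviality: if $Pv=0$ with $v|_{\partial M}=0$, pair with $i_v\phi$ and integrate by parts---the boundary term vanishes since $i_v\phi|_{\partial M}=0$---to get $\|\mathcal{L}_v\phi\|^2=0$. Hence $v$ is an infinitesimal $G_2$-automorphism, and so a Killing field of $g_\phi$; the Killing equation together with $v|_{\partial M}=0$ forces $\nabla v|_{\partial M}=0$ as well (the tangential derivatives vanish because $v\equiv 0$ along $\partial M$, and antisymmetry of $\nabla v$ then kills the normal derivative), and uniqueness of Killing fields from their 1-jet at a point yields $v\equiv 0$ on the connected manifold $M$. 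Thus $P$ is an isomorphism between the relevant Sobolev spaces.

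With $P$ invertible, the implicit function theorem provides, after possibly shrinking $\epsilon$, a smooth map $\tilde\phi\mapsto v(\tilde\phi)$ with $v(\phi)=0$ and $F(v(\tilde\phi),\tilde\phi)=0$; the diffeomorphism $f=f_{v(\tilde\phi)}\in\mathcal{G}^{s+1}$ is then the required unique element placing $f^*\tilde\phi$ in $S_\delta$. The main technical obstacle in this plan is the kernel-triviality of $P$---specifically the Killing-field argument at the boundary; the remaining pieces (smoothness of $F$, ellipticity, and the implicit function theorem setup) are the routine Banach-space bookkeeping of Ebin's slice theorem, carried out in detail in \cite{kn:FLS}.
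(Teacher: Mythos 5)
Your proposal follows essentially the same route as the paper, which likewise defers the Ebin-type Banach-space argument to Theorem 2.1 of \cite{kn:FLS} and isolates as the key ingredient exactly your linear operator: the invertibility of $\pi_{7}d^{*}d$ on $\Omega^{2}_{7}$ with Dirichlet boundary condition (the paper's Lemma 5), proved by the same integration by parts and Killing-field rigidity argument, so your proof matches the paper's in substance and even adds detail (the symbol computation, the $1$-jet argument for Killing fields). The one caveat is that $F(v,\tilde{\phi})=\pi_{7}d^{*}(f_{v}^{*}\tilde{\phi}-\phi)$ is not literally smooth in $(v,\tilde{\phi})$ into $L^{2}_{s-1}$ when $\tilde{\phi}$ is only $L^{2}_{s}$ (differentiating in $v$ costs a derivative of $\tilde{\phi}$); this loss-of-derivative issue is exactly what the Ebin/FLS scheme is structured to circumvent (e.g.\ by splitting off $f_{v}^{*}\phi-\phi$, which is smooth in $v$, and treating $f_{v}^{*}(\tilde{\phi}-\phi)$ as a small Lipschitz perturbation), so deferring to \cite{kn:FLS} is correct but it is a bit more than routine bookkeeping.
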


The statement is modelled on Theorem 2.1 of \cite{kn:FLS} and the proof is essentially the same so we do not go into  it in detail here. However we do want to recall the linear result which underpins the proof.

\begin{lem}
For any $\sigma\in \Omega^{2}_{7}$ there is a unique $\gamma=\Gamma(\sigma)\in \Omega^{2}_{7}$
with $\gamma\Vert_{\partial M}=0$ and $\pi_{7}d^{*}d\gamma=\sigma$. The map $\Gamma$ extends to a bounded map from $L^{2}_{s-1}$ to $L^{2}_{s+1}$.
\end{lem}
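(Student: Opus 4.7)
The plan is to treat $T=\pi_{7}\circ d^{*}d$ as a second-order operator on sections of the subbundle $\Lambda^{2}_{7}$ over $M$, and to solve the Dirichlet problem $T\gamma=\sigma$, $\gamma\Vert_{\partial M}=0$ by exhibiting it as a self-adjoint elliptic boundary value problem whose kernel is trivial.

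\textbf{Ellipticity and self-adjointness.} The principal symbol of $d^{*}d$ on $\Lambda^{2}$ is scalar multiplication by $|\xi|^{2}$, so the symbol of $T$ on $\Lambda^{2}_{7}$ is $|\xi|^{2}\cdot\mathrm{id}_{\Lambda^{2}_{7}}$, which is nondegenerate; hence $T$ is elliptic of order two. As observed after Lemma~1, for a section of $\Lambda^{2}_{7}$ the conditions $\gamma\Vert_{\partial M}=0$ and $\gamma\vert_{\partial M}=0$ coincide, so the boundary condition is the standard (well-posed) Dirichlet condition for a second-order elliptic operator on a vector bundle. Self-adjointness is immediate by integration by parts: for $\gamma_{1},\gamma_{2}\in\Omega^{2}_{7}$ vanishing on $\partial M$,
$$\langle T\gamma_{1},\gamma_{2}\rangle=\langle d^{*}d\gamma_{1},\gamma_{2}\rangle=\langle d\gamma_{1},d\gamma_{2}\rangle=\langle\gamma_{1},T\gamma_{2}\rangle,$$
with no boundary term.

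\textbf{Triviality of the kernel (the key step).} Suppose $\gamma\in\Omega^{2}_{7}$ satisfies $T\gamma=0$ and $\gamma\Vert_{\partial M}=0$. Pairing with $\gamma$ and integrating by parts yields $\Vert d\gamma\Vert^{2}=0$, so $d\gamma=0$. Write $\gamma=i_{v}\phi$ for a unique vector field $v$; since $d\phi=0$, this gives $\mathcal{L}_{v}\phi=d(i_{v}\phi)=0$. Because the metric $g_{\phi}$ depends naturally on $\phi$, $v$ also preserves $g_{\phi}$, i.e.\ $v$ is a Killing field. Pointwise injectivity of $v\mapsto i_{v}\phi$ gives $v\vert_{\partial M}=0$. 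Differentiating along directions tangent to $\partial M$ yields $\nabla_{X}v=0$ for every $X\in T(\partial M)$, and the skew-symmetry of $\nabla v$ (which is the Killing condition) then forces the remaining component $\nabla_{\nu}v$ to vanish as well at each boundary point. Since a Killing field with trivial $1$-jet at a single point of the connected manifold $M$ must vanish identically, $v\equiv 0$ and so $\gamma\equiv 0$.

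\textbf{Existence and regularity.} With $T$ self-adjoint, elliptic, and of trivial kernel, the standard Fredholm theory for elliptic BVPs (of the sort recalled for Proposition~2) gives existence and uniqueness of $\gamma$ for every $\sigma$, together with the elliptic estimate $\Vert\gamma\Vert_{L^{2}_{s+1}}\leq C\Vert\sigma\Vert_{L^{2}_{s-1}}$, which is precisely the boundedness claim for $\Gamma$. The main obstacle is the kernel-triviality argument, which relies on the unique-continuation property of Killing fields with vanishing boundary $1$-jet; the remaining ingredients are routine applications of second-order elliptic boundary theory.
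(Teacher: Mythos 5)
Your overall route coincides with the paper's: treat $\pi_{7}d^{*}d$ as a self-adjoint elliptic operator on $\Omega^{2}_{7}$ with Dirichlet condition, reduce existence and the estimate to uniqueness by standard theory, and get uniqueness by showing a homogeneous solution satisfies $d\gamma=0$, hence corresponds to a Killing field vanishing on $\partial M$, which must vanish identically. Your expansion of the Killing-field step (tangential derivatives of $v$ vanish along $\partial M$, skew-symmetry of $\nabla v$ then kills $\nabla_{\nu}v$, and a Killing field with trivial $1$-jet at a point of a connected manifold vanishes) is correct and usefully fills in what the paper calls a ``simple fact''.

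There is, however, a genuine error in your ellipticity step. The principal symbol of $d^{*}d$ on $\Lambda^{2}$ is \emph{not} $\vert\xi\vert^{2}\,\mathrm{id}$; that is the symbol of the full Hodge Laplacian $d^{*}d+dd^{*}$. The symbol of $d^{*}d$ at $\xi$ is $\alpha\mapsto i_{\xi}(\xi\wedge\alpha)=\vert\xi\vert^{2}\alpha-\xi\wedge i_{\xi}\alpha$ (identifying $\xi$ with a vector via the metric), which is degenerate: it annihilates every $2$-form of the shape $\xi\wedge\beta$, consistent with the fact that $d^{*}d$ kills exact forms. So the ellipticity of $\pi_{7}d^{*}d$ on $\Omega^{2}_{7}$ is not automatic from the Laplacian; it is precisely the point behind the paper's ``easy to check'', and it really uses the $G_{2}$ decomposition. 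The fix is short: for $\alpha_{7}\in\Lambda^{2}_{7}$ one has $\langle \pi_{7}\,i_{\xi}(\xi\wedge\alpha_{7}),\alpha_{7}\rangle=\vert\xi\wedge\alpha_{7}\vert^{2}$, which vanishes only if $\alpha_{7}=\xi\wedge\beta$ for some $1$-form $\beta$; but such a form is decomposable, so $\alpha_{7}\wedge\alpha_{7}\wedge\phi=0$, and by (6)--(7) this forces $2\vert(\alpha_{7})_{7}\vert^{2}=\vert(\alpha_{7})_{14}\vert^{2}$, which for an element of $\Lambda^{2}_{7}$ gives $\alpha_{7}=0$. Hence the symbol of $\pi_{7}d^{*}d$ is injective, and being a square self-adjoint system it is bijective, so the operator is elliptic; with that repair the rest of your argument goes through exactly as in the paper.
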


\begin{proof}It is easy to check that $\pi_{7}d^{*}d$ is a self-adjoint elliptic operator
on $\Omega^{2}_{7}$. Thus by the standard theory it suffices to show uniqueness,
in other words that for $\gamma\in \Omega^{2}_{7}$ if $\pi_{7}d^{*}d\gamma=0$ and $\gamma\Vert_{\partial
M}=0$ then $\gamma=0$. Integrating by parts, the conditions imply that $d\gamma=0$.
Using the identification of $\Lambda^{2}_{7}$ with tangent vectors, $\gamma$
corresponds to a vector field $v$,  vanishing on the boundary of $M$, with
$L_{v}(\phi)=0$. This implies that  $v$ is  a Killing field for the metric
$g_{\phi}$ and it is a simple fact from Riemannian geometry that vanishing
on the boundary forces $v$ to vanish everywhere. \qed \end{proof}

The set of closed $3$-forms in a given enhancement class is preserved by the diffeomorphism group ${\cal G}$. Thus we immediately get from Proposition 5 a model for a neighbourhood in ${\cal Q}_{\hatrho}$. Define
$$  T=\{d\alpha: \alpha\in \Omega^{2}(M), \alpha\vert_{\partial M}=0, \pi_{7}d^{*}d\alpha=0\}, $$
and let $T_{s}$ be the $L^{2}_{s}$ completion of $T$. Let ${\cal P}^{s}_{\hatrho}$
be the $L^{2}_{s}$ version of ${\cal P}_{\hatrho}$, in an obvious sense,
and ${\cal Q}^{s}_{\hatrho}$ be the quotient by ${\cal G}^{s+1}$
Then Proposition 5 implies that for suitable small $\delta>0$ the map $\chi\mapsto \phi+\chi$ induces a homeomorphism from the ball $\{ \chi\in T_{s}: \Vert \chi\Vert_{L^{2}_{s}}<\delta\}$ to a neighbourhood of $[\phi]\in {\cal Q}^{s}_{\hatrho}$. Slightly more generally, if a 3-form $\phi_{1}$ is sufficiently close to $\phi$ in $L^{2}_{s}$ norm and if $\hatrho_{1}$ is the corresponding enhancement class,  the map $\chi\mapsto \phi_{1}+\chi$ induces a homeomorphism from this same ball in $T_{s}$ to a neighbourhood of $[\phi_{1}]$ in ${\cal Q}^{s}_{\hatrho_{1}}$.

It follows immediately from Lemma 5 that the natural map from $T$ to $T{\cal Q}$ is an isomorphism. Define a vector space
$$  B=\{ \alpha=\alpha_{7}+\alpha_{14}\in \Omega^{2}(M): \alpha_{14}\in A, \alpha \vert_{\partial M}=0, \pi_{7}d^{*}d\alpha=0\}. $$
The map $\alpha\mapsto\alpha_{14}$ induces a map $p:B\rightarrow A$ and Lemma 5 implies that this is an isomorphism. To spell this out, it is clear from the decomposition of the forms on the boundary that we can choose a smooth bundle map $\tau: \Lambda^{2}_{14}\rightarrow \Lambda^{2}_{7}$, supported in a neighbourhood of the boundary, such that for all $\beta\in \Omega^{2}_{14}$ we have $\beta+\tau(\beta)\vert_{\partial M}=0$. Now for $\alpha_{14}\in A$ define
\begin{equation}  F(\alpha_{14})= \alpha_{14}+ \tau(\alpha_{14}) - \Gamma( \pi_{7}d^{*}d(\alpha_{14}+\tau(\alpha_{14})).\end{equation}
Then $F$ maps to $B$ and is the inverse to $p$. It is clear from the formula that this extends to an isomorphism (of topological vector spaces) $F:A_{s+1}\rightarrow B_{s+1}$ where $B_{s+1}$ is the $L^{2}_{s+1}$ completion of $B$. Now the exterior derivative induces a  map from $B$ to $T$ and it follows from Proposition 3 and the above-noted isomorphism of $T$ and $T{\cal Q}$ that this is an isomorphism and also for the corresponding Sobolev versions. Putting all this together we have the following result.

 \begin{prop}
 There is a $\delta'>0$ such that, if $\phi_{1}$ is sufficiently close to $\phi$ in $L^{2}_{s}$,  the map $\alpha\mapsto \phi_{1}+ d F(\alpha)$
induces a homeomorphism from the ball in $A_{s+1}$: $\{\alpha\in A_{s+1}: \Vert \alpha\Vert_{L^{2}_{s+1}}<
\delta'\}$ to a neighbourhood of $[\phi_{1}]$ in ${\cal Q}^{s}_{\hatrho_{1}}$ where $\hatrho_{1}$ is the enhanced boundary value determined by $\phi_{1}$.
\end{prop}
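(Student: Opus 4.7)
The plan is to prove Proposition 6 by packaging three earlier results into a single chain of homeomorphisms: the slice parametrisation from Proposition 5 (applied at the nearby basepoint $\phi_1$), the topological isomorphism $F:A_{s+1}\to B_{s+1}$ given by formula (26), and the exterior derivative $d:B_{s+1}\to T_s$. The resulting composition $\alpha\mapsto \phi_1+dF(\alpha)\mapsto[\phi_1+dF(\alpha)]$ will be shown to be the required local homeomorphism.

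First I would invoke the already-noted extension of Proposition 5 to a varying basepoint: for $\phi_1$ sufficiently close to $\phi$ in $L^2_s$, the map $\chi\mapsto \phi_1+\chi$ restricted to a ball $\{\chi\in T_s:\Vert\chi\Vert_{L^2_s}<\delta\}$ is a homeomorphism onto a neighbourhood of $[\phi_1]$ in ${\cal Q}^s_{\hatrho_1}$. Next I would check that formula (26) produces a bounded linear map $F:A_{s+1}\to B_{s+1}$: boundedness follows from the Sobolev estimate for $\Gamma$ supplied by Lemma 5, and $F$ is the two-sided inverse of the projection $p:B_{s+1}\to A_{s+1}$ sending $\alpha\mapsto\alpha_{14}$. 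Then I would note that $d:B_{s+1}\to T_s$ is a topological isomorphism, since it was pointed out just before the statement of the proposition that Proposition 3 combined with the natural identification $T\cong T{\cal Q}$ gives exactly this.

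Composing these, $dF:A_{s+1}\to T_s$ is a topological isomorphism of Banach spaces, so it maps the open ball of some sufficiently small radius $\delta'$ in $A_{s+1}$ homeomorphically onto an open neighbourhood of the origin in $T_s$. Choosing $\delta'$ small enough that this image lies inside the ball of radius $\delta$ where the slice parametrisation applies, the composition $\alpha\mapsto\phi_1+dF(\alpha)\mapsto[\phi_1+dF(\alpha)]$ gives a homeomorphism onto a neighbourhood of $[\phi_1]$ in ${\cal Q}^s_{\hatrho_1}$, as required.

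The only genuinely forward-looking point is uniformity of the constants in the varying basepoint $\phi_1$. This is not really an obstacle: the maps $\tau$, $\Gamma$, $F$, $p$ and $d$ are all built from the fixed background structure defined by $\phi$, so their operator norms do not depend on $\phi_1$; the slice radius $\delta$ in Proposition 5 is uniform on a neighbourhood of $\phi$ by the implicit function theorem argument that underlies it. Thus $\delta'$ may be chosen uniformly for all $\phi_1$ in a fixed $L^2_s$-neighbourhood of $\phi$. The whole proof is really a bookkeeping step on top of the analysis already done in Proposition 5, Lemma 5, Proposition 3 and the discussion of $T\cong T{\cal Q}$; the substantive ellipticity content has been concentrated in Theorem 1.
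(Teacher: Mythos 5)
Your argument is essentially the paper's own: it assembles the slice parametrisation at the nearby basepoint $\phi_{1}$, the isomorphism $F:A_{s+1}\rightarrow B_{s+1}$ built from $\tau$ and $\Gamma$ (Lemma 5), and the isomorphism $d:B_{s+1}\rightarrow T_{s}$ coming from Proposition 3 together with the identification of $T$ with $T{\cal Q}$, exactly as in the discussion preceding the statement. Your added remarks on uniformity of the constants in $\phi_{1}$ are correct and consistent with the paper's (more terse) treatment, so the proposal is fine.
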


\subsection{A Fredholm equation}

We want to represent the solutions of our boundary value problem, for given data $\hatrho$, as the zeros of a Fredholm map. Recall that for $\alpha\in\Omega^{2}(M)$ we defined $W(\alpha)= *_{\phi} d \Theta(\phi+d\alpha)$. In our description from Proposition 7 of a neighbourhood in ${\cal Q}^{s}_{\hatrho}$ the torsion-free equation is $W(F(\alpha))=0$, for small $\alpha\in A_{s+1}$. 
 The complication is that  for general $\alpha$ the term  $W(F(\alpha))$ lies in the subspace $*_{\phi}\Omega^{5}_{14,\tilde{\phi}}$ where $\tilde{\phi}= \phi+d F(\alpha)$ and this space also depends on $\tilde{\phi}$. We use a projection construction to get around this,  which essentially amounts to constructing a local trivialisation of the cotangent bundle of ${\cal Q}^{s}_{\hatrho}$.

For any $\alpha$ as above, write $\sigma= W(F(\alpha))=*_{\phi} d\tilde{\phi}$. Then $d^{*}\sigma=0$ and $\sigma$ is a section of the bundle $*_{\phi} \Lambda^{5}_{14,\tilde{\phi}}$. Let $\sigma=\sigma_{7}+\sigma_{14}$. By our Hodge theory result, Proposition 2,  there is a unique $\eta\in\Omega^{1}$ orthogonal to ${\cal H}^{1}$  solving the equation $ d^{*}d\eta= d^{*}\sigma_{7}$ with  $\eta\vert_{\partial M}=0$.  Now recall (as in the proof of Proposition 3)  that $d^{*}d= \sthreetwo d^{*} d_{14}$ on $\Omega^{1}$. Thus
$$  d^{*}\sigma_{14}=-d^{*}\sigma_{7} = - \sthreetwo d^{*} d_{14}\eta  .$$

Set $\tilde{\sigma}_{14}= \sigma_{14}+ \sthreetwo d_{14}\eta$, so $\tilde{\sigma}_{14}$ lies in $\Omega^{2}_{14}$ and satisfies $d^{*}\tilde{\sigma}_{14}=0$.

\begin{lem} There is a $\delta''>0$ such that if $\Vert \alpha\Vert_{L^{2}_{s+1}}< \delta''$ then $\sigma=0$ if and only if $\tilde{\sigma}_{14}=0$. 
\end{lem}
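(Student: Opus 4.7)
The plan is to exploit two facts: first, that $\sigma$ is constrained to lie in the subbundle $\ast_\phi \Lambda^5_{14,\tilde\phi}$, and second, that this subbundle differs from $\Lambda^2_{14,\phi}$ by a small bundle endomorphism when $\alpha$ is small. Concretely, I expect to produce a smooth family of bundle maps $K(\tilde\phi):\Lambda^2_{14,\phi}\to \Lambda^2_{7,\phi}$ with $K(\phi)=0$, depending smoothly on the pointwise value of $\tilde\phi-\phi$, such that a section $\sigma$ of $\Lambda^2$ lies in $\ast_\phi\Lambda^5_{14,\tilde\phi}$ if and only if $\sigma_7 = K(\tilde\phi)\sigma_{14}$. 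This is a routine linear-algebra construction from the decomposition (5) at the base point and the fact that positive forms constitute an open set; the operator norm of $K(\tilde\phi)$ will be controlled by $\|\tilde\phi-\phi\|_{C^0}\lesssim \|\alpha\|_{L^2_{s+1}}$ via the Sobolev embedding (recall $s=5$).

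The easy direction is immediate: if $\sigma=0$ then $\sigma_7=0$, so the defining equation $d^*d\eta=d^*\sigma_7=0$ with $\eta\vert_{\partial M}=0$ and $\eta$ orthogonal to $\mathcal H^1$ forces $\eta=0$ by the uniqueness clause of Proposition 2, and hence $\tilde\sigma_{14}=\sigma_{14}+\tfrac{3}{2}d_{14}\eta=0$.

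For the converse, assume $\tilde\sigma_{14}=0$. Then $\sigma_{14}=-\tfrac{3}{2}d_{14}\eta$, and the constraint just described gives
$$ \sigma_7 = K(\tilde\phi)\sigma_{14} = -\tfrac{3}{2}\,K(\tilde\phi)\,d_{14}\eta. $$
Since $d^*\sigma_7$ equals $d^*d\eta$ by the defining equation for $\eta$, I obtain the single elliptic equation
$$ d^*d\,\eta + \tfrac{3}{2}\,d^{*}\!\bigl(K(\tilde\phi)\,d_{14}\eta\bigr) = 0, $$
to be solved for $\eta\in\Omega^1$ with $\eta\vert_{\partial M}=0$ and $\eta\perp\mathcal H^1$. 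The unperturbed operator $d^*d$ on the $L^2$-orthogonal complement of $\mathcal H^1$, with Dirichlet boundary data, is invertible (this is the content of Proposition 2 applied to $p=1$). The second-order term is a genuine second-order operator but its coefficients are $O(\|\alpha\|_{L^2_{s+1}})$ in $C^1$, hence a small perturbation in the operator norm from $L^2_{s+1}\to L^2_{s-1}$ provided $s$ is large enough (which it is, as $s=5$ was chosen for exactly this sort of argument). Standard perturbation of invertible Fredholm operators then gives a $\delta''>0$ such that, for $\|\alpha\|_{L^2_{s+1}}<\delta''$, the only solution is $\eta=0$; then $\sigma_{14}=-\tfrac{3}{2}d_{14}\eta=0$ and $\sigma_7=K(\tilde\phi)\sigma_{14}=0$, so $\sigma=0$.

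The main obstacle is verifying that the perturbation $\tfrac{3}{2}d^*\!\circ K(\tilde\phi)\circ d_{14}$ is in fact a small perturbation of $d^*d$ in an operator norm compatible with the invertibility of the latter. This reduces to bounding $K(\tilde\phi)$ and its first derivatives, which in turn follows from Sobolev embedding once $s\ge 5$, because one derivative of $K(\tilde\phi)$ costs one derivative of $\tilde\phi$, i.e. of $dF(\alpha)$, and $F:A_{s+1}\to B_{s+1}$ is bounded by Proposition 7. Once that estimate is in hand the rest is formal.
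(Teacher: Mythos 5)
Your construction of the graph map $K(\tilde\phi)$ (the paper's $S_{\tilde\phi}$) and the easy direction are in line with the paper, but the converse direction has a genuine gap. You claim that the unperturbed operator $d^{*}d$ on $1$-forms, with the single boundary condition $\eta\vert_{\partial M}=0$ and restricted to the $L^{2}$-orthogonal complement of $\mathcal{H}^{1}$, is invertible, citing Proposition 2. That is not what Proposition 2 says, and it is false: $d^{*}d$ is not elliptic on $\Omega^{1}$ (its symbol at $\xi$ annihilates the span of $\xi$), and its kernel under exactly your side conditions is infinite dimensional. Indeed, for any nonzero function $f$ with $f\vert_{\partial M}=0$ the form $\eta=df$ satisfies $d^{*}d\eta=0$, its pullback to $\partial M$ vanishes, and it is orthogonal to $\mathcal{H}^{1}$ (for $\mu\in\mathcal{H}^{1}$, $\langle df,\mu\rangle=\int_{M}d(f\,{*\mu})=\int_{\partial M}f\,{*\mu}=0$ using $d^{*}\mu=0$ and $f\vert_{\partial M}=0$). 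Proposition 2 concerns the full Laplacian with the two boundary conditions $\mu\vert_{\partial M}=0$ and $d^{*}\mu\vert_{\partial M}=0$, so "perturbation of an invertible elliptic operator" does not get off the ground in the form you set it up; for the same reason, the uniqueness you invoke in the easy direction also really uses that $\eta$ is the Proposition 2 solution $G(d^{*}\sigma_{7})$, which carries the extra condition $d^{*}\eta\vert_{\partial M}=0$ (and in fact $d^{*}\eta=0$).

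The repair is exactly the extra information you dropped, and it is how the paper argues: since $\eta=G(d^{*}\sigma_{7})$ is co-closed and satisfies the full boundary conditions of (20), one has $d^{*}d\eta=\Delta\eta$ and the elliptic estimate for that self-adjoint problem gives $\Vert d_{14}\eta\Vert_{L^{2}_{k}}\leq C\Vert \sigma_{7}\Vert_{L^{2}_{k}}$. The paper then simply chains estimates rather than inverting a perturbed operator in $\eta$: the graph-map smallness gives $\Vert\sigma_{7}\Vert_{L^{2}_{k}}\leq\epsilon\Vert\sigma_{14}\Vert_{L^{2}_{k}}$ (for $k\leq s-1$, with $\epsilon$ small when $\Vert\alpha\Vert_{L^{2}_{s+1}}$ is small), and if $\tilde\sigma_{14}=0$ then $\sigma_{14}=-\frac{3}{2}d_{14}\eta$, so $\Vert\sigma_{14}\Vert_{L^{2}_{k}}\leq\frac{3}{2}C\epsilon\Vert\sigma_{14}\Vert_{L^{2}_{k}}$; choosing $\delta''$ so that $\frac{3}{2}C\epsilon<1$ forces $\sigma_{14}=0$ and then $\sigma_{7}=S_{\tilde\phi}\sigma_{14}=0$. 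Equivalently, your scheme works if you run it through the estimate $\Vert\eta\Vert_{L^{2}_{k+1}}\leq C\Vert\Delta\eta\Vert_{L^{2}_{k-1}}$ for the problem (20) rather than through a claimed invertibility of $d^{*}d$ with Dirichlet data alone; as written, that invertibility claim is the missing (and incorrect) step.
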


\begin{proof}We know that $\sigma$ is a section of the bundle $*_{\phi}\Lambda^{5}_{14,\tilde{\phi}}$. When $\tilde{\phi}=\phi$ this is exactly the bundle $\Lambda^{2}_{14}$. If $\tilde{\phi}$ is close to $\phi$ in $C^{0}$ we can use the standard graph construction. There is a bundle map $$S_{\tilde{\phi}}:\Lambda^{2}_{14}\rightarrow \Lambda^{2}_{7}$$ 
such that elements of  $*_{\phi}\Lambda^{5}_{14,\tilde{\phi}}$ are of the form $\tau_{14}+ S_{\tilde{\phi}} \tau_{14}$. If $\alpha$ is small in $L^{2}_{s+1}$ then $S_{\tilde{\phi}}$ is small in $L^{2}_{s}$. In the preceding discussion, we have $\sigma_{7}= S_{\tilde{\phi}} \sigma_{14}$ so
$$  \Vert \sigma_{7} \Vert_{L^{2}_{k}} \leq \epsilon \Vert \sigma_{14}\Vert_{L^{2}_{k}}, $$
for $k\leq s-1$, 
where we can make $\epsilon$ as small as we please by taking $\alpha$ small in $L^{2}_{s+1}$. On the other hand, the elliptic estimates for the boundary value problem give 
$$   \Vert d_{14} \eta \Vert_{L^{2}_{k}}\leq C \Vert \sigma_{7} \Vert_{L^{2}_{k}}. $$
 The Lemma follows by choosing $\delta''$ such that  $\sthreetwo C\epsilon<1$.\qed\end{proof}
 
 Write $\tilde{\sigma}_{14}= \Pi_{\tilde{\phi}}(\sigma)$. The conclusion of Proposition 7 and Lemma 6 is that the solutions of the torsion free equation in a neighbourhood of $[\phi]$ in ${\cal Q}_{\hatrho}$ correspond to the zeros of a map
${\cal F}$ defined by 
\begin{equation}  {\cal F}(\alpha)= \Pi_{\tilde{\phi}}\left( W(\phi+ d F \alpha)\right). \end{equation}
 This map ${\cal F}$ takes values in the space $A'=\{\alpha\in \Omega^{2}_{14}: d^{*}\alpha=0\}$. Write $A'_{s-1}$ for the $L^{2}_{s-1}$ completion.

\begin{prop} ${\cal F}$ extends to a smooth Fredholm map of index $0$ from a neighbourhood of the origin in $A_{s+1}$ to $A'_{s-1}$. The derivative at $\alpha=0$ is $-\Delta: A_{s+1}\rightarrow A'_{s-1}$. 
\end{prop}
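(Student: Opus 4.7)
The plan is to verify three items in sequence: smoothness of ${\cal F}$ between the stated Sobolev spaces, the formula $D{\cal F}(0)=-\Delta$, and the Fredholm index-zero property. The first two are unpackings of the construction; the third reduces to Theorem 1 together with the $L$-symmetry identity of Proposition 5.

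\textbf{Smoothness.} I would decompose ${\cal F}$ as the composition $\alpha\mapsto \phi+dF(\alpha)\mapsto \Theta(\phi+dF(\alpha))\mapsto *_{\phi}d\,\Theta(\phi+dF(\alpha))=W(F\alpha)\mapsto \Pi_{\tilde{\phi}}(W(F\alpha))$. The first arrow is bounded linear into $L^{2}_{s}(\Lambda^{3})$ by (26). Because $s\geq 5>7/2$ the space $L^{2}_{s}$ is a Banach algebra, so composition with the pointwise smooth algebraic map $\Theta$ on the open set of positive forms is smooth in $L^{2}_{s}$; applying $*_{\phi}d$ then lands in $L^{2}_{s-1}$. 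The projection $\Pi_{\tilde{\phi}}$ combines the $\tilde{\phi}$-dependent splitting $\sigma=\sigma_{7}+\sigma_{14}$, a smooth algebraic operation, with the Dirichlet Green's operator for $d^{*}d$ on $\Omega^{1}$ whose boundedness is captured in Lemma 6. Composing, ${\cal F}: A_{s+1}\to A'_{s-1}$ is smooth near the origin.

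\textbf{Derivative at zero.} Since $\phi$ is torsion-free, $W(0)=0$, so $D{\cal F}(0)[\alpha]=\Pi_{\phi}(L(F(\alpha)))$, with $L$ the linearisation in (13). By (26), $F(\alpha)-\alpha\in\Omega^{2}_{7}$, and $L$ vanishes on $\Omega^{2}_{7}$ by Proposition 2, so $L(F(\alpha))=L(\alpha)$. For $\alpha\in A$ the constraint $d^{*}\alpha=0$ collapses Proposition 2's formula to $L(\alpha)=-\Delta\alpha$, which already lies in $A'$ since $d^{*}$ commutes with $\Delta$. At $\tilde{\phi}=\phi$ the projection $\Pi_{\phi}$ acts as the identity on $A'$ (the $\sigma_{7}$ component vanishes, forcing $\eta=0$ in the construction). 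Hence $D{\cal F}(0)[\alpha]=-\Delta\alpha$.

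\textbf{Fredholm and index zero.} By Theorem 1, $\Delta$ on $\Omega^{2}_{14}$ with boundary conditions $\alpha\Vert_{\partial,8}=0$, $d^{*}\alpha\vert_{\partial M}=0$ is self-adjoint elliptic, hence Fredholm of index zero with kernel and cokernel both equal to $\tilde{H}$. The closing clause of Theorem 1 shows that a solution for a right-hand side in $A'_{s-1}$ automatically satisfies $d^{*}\alpha=0$ on all of $M$, so restricting both source and target to the $d^{*}$-closed subspaces preserves the Fredholm property and index, giving an index-zero Fredholm map $\hat{A}_{s+1}\to A'_{s-1}$ with kernel and cokernel both $\tilde{H}$ (which sits in $\hat{A}$ by Proposition 2 and in $A'$ trivially). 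To descend from $\hat{A}$ to $A=\hat{A}\ominus \pi_{14}({\cal H}^{2})$, I invoke Proposition 5: the symmetry $\langle L\alpha,\beta\rangle=\langle\alpha,L\beta\rangle$ on $\hat{A}$ together with $L(\pi_{14}({\cal H}^{2}))=0$ forces $-\Delta(A)\perp \pi_{14}({\cal H}^{2})$ inside $A'$, so the finite-dimensional reduction of the source by $\pi_{14}({\cal H}^{2})$ is matched by a corresponding reduction of the effective cokernel, keeping the index at zero.

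The main obstacle is the third step, where the combined constraints $d^{*}\alpha=0$ and $\alpha\perp\pi_{14}({\cal H}^{2})$ built into $A$ must be shown compatible with the self-adjoint elliptic BVP of Theorem 1 without destroying index zero; both points are handled by the last clause of Theorem 1 (automatic preservation of $d^{*}\alpha=0$) and the $L$-symmetry identity of Proposition 5.
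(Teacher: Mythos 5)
Your smoothness argument and your computation of the derivative at $0$ are correct and essentially the paper's own: the paper does the same formal chain ($W(F\alpha)=L(F\alpha)+O(\alpha^{2})$, $L(F\alpha)=L(\alpha)$ because $F(\alpha)-\alpha\in\Omega^{2}_{7}$, $L(\alpha)=-\Delta\alpha$ on $d^{*}$-closed forms, and $\Pi$ reduces to the identity at $\tilde{\phi}=\phi$), leaving the Sobolev compatibility as a routine check which your Banach-algebra remarks supply. The genuine gap is in your final index count. You correctly get that $-\Delta:\hatA_{s+1}\rightarrow A'_{s-1}$ is Fredholm with kernel and cokernel both $\tilde{H}$ (the last clause of Theorem 1 gives both $\tilde{H}\subset\hatA$ and solvability within the $d^{*}$-closed spaces). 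But the descent from $\hatA$ to $A=\hatA\ominus\pi_{14}({\cal H}^{2})$ does not work the way you claim. The space $\pi_{14}({\cal H}^{2})$ lies in the kernel of $-\Delta$ on $\hatA$ (for closed $\gamma$ one has $L(\pi_{14}\gamma)=L(\gamma)=0$, and $L=-\Delta$ on $d^{*}$-closed forms of type $14$), so removing it from the domain leaves the image unchanged; and the orthogonality $-\Delta(A)\perp\pi_{14}({\cal H}^{2})$ that you invoke is nothing new, since the image of $\hatA$ is already $L^{2}$-orthogonal to the whole of $\tilde{H}\supset\pi_{14}({\cal H}^{2})$ by the self-adjointness of the boundary value problem. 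Hence, with the literal target $A'_{s-1}$, the cokernel stays isomorphic to $\tilde{H}$ while the kernel shrinks to $A\cap\tilde{H}$, and your bookkeeping yields index $-\dim{\cal H}^{2}=-\dim H^{2}(M,\partial M)$ rather than $0$: there is no ``corresponding reduction of the effective cokernel'' unless you also shrink the target (say to $A'\cap\pi_{14}({\cal H}^{2})^{\perp}$), and you have not shown that ${\cal F}$ takes values in such a subspace.

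The paper handles exactly this point differently: it appeals to the self-adjointness of $\Delta$ on $A$, i.e.\ the symmetry of $\langle\Delta\alpha,\beta\rangle$ for $\alpha,\beta\in A$, to identify the relevant cokernel directly with the kernel ${\cal H}_{\phi}=A\cap\tilde{H}$ — in effect the target is read through the $L^{2}$ pairing with $A$ (equivalently, modulo the finite-dimensional space $\pi_{14}({\cal H}^{2})$), not as an orthogonal complement inside $A'_{s-1}$. The discrepancy you run into is precisely this finite-dimensional space, isomorphic to $H^{2}(M,\partial M)$; it vanishes in the examples of Section 5, where your count and the paper's agree, but your proposed mechanism for the descent from $\hatA$ to $A$ does not establish the index-zero statement in general and needs to be replaced by the self-adjointness identification (or by a proof that ${\cal F}$ lands in $A'\cap\pi_{14}({\cal H}^{2})^{\perp}$). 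The remaining issues are only cosmetic mis-citations (the symmetry of $\langle L\alpha,\beta\rangle$ under the condition $\alpha\Vert_{\partial,8}=\beta\Vert_{\partial,8}=0$ is proved in the self-adjointness part of Theorem 1, and the bound on the Green's operator comes from the Hodge-theory proposition rather than Lemma 6).
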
\begin{proof}
The proof is straightforward, given Theorem 1. We compute the derivative formally:
$$   W(F(\alpha))= L(F(\alpha))+ O(\alpha^{2}), $$
(by definition of the linearised operator $L$);
$$   L(F(\alpha))= L(\alpha),$$
(since $F(\alpha)$ differs from $\alpha$ by a term in $\Omega^{2}_{7}$ and $L$ vanishes on $\Omega^{2}_{7}$);
$$  L(\alpha)=-\Delta\alpha, $$
(by the discussion in Section 3). Clearly the derivative of the projection term $\Pi_{\tilde{\phi}}$ at $\alpha=0$ is the identity on $\Omega^{2}_{14}$. Now one can check that these calculations are compatible with the Sobolev structures.

 We know that $\Delta$ is self-adjoint on $A$ so the index is zero and the cokernel can be identified with the kernel. This kernel is $${\cal H}_{\phi}=A\cap \tilde{H}= \{\alpha\in \Omega^{2}_{14}: d^{*}\alpha=0, \Delta\alpha=0, \alpha\Vert_{\partial,8}=0\}, $$ and the isomorphism from $A$ to $T{\cal Q}$ takes ${\cal H}_{\phi}$ to the space $H_{\phi}$  defined in (17). \qed\end{proof}

This proposition achieves our goal of representing the solutions of the torsion-free equation in ${\cal Q}^{s}_{\hatrho}$ as the zeros of a Fredholm map. By standard elliptic regularity any $L^{2}_{s+1}$ solution is smooth.

\

 To complete the discussion, we consider varying the boundary data $\hatrho$. Let $\theta$ be a closed $3$-form on $M$ which is small in $L^{2}_{s}$, so that $\phi+\theta$ is a closed positive $3$-form  on $M$ which defines  perturbed enhanced boundary data $\hatrho(\theta)$. By Proposition 7, the map
$$  \alpha\mapsto \phi+\theta+ dF(\alpha)$$
gives a homeomorphism from the $\delta''$-ball in $A^{s+1}$ to a neighbourhood of $[\phi+\theta]$ in ${\cal Q}_{\hatrho(\theta)}$. In this neighbourhood the solutions of the torsion-free equation correspond to the zeros of a perturbed map 
$$ {\cal F}_{\theta}(\alpha) =  \Pi_{\tilde{\phi}}\left( W(\phi+ \theta+ d F \alpha)\right), $$
where $\tilde{\phi}=\phi+\theta+ dF(\alpha)$. 
This is a smooth map in the two variables $\theta\in {\rm ker}\ d \cap L^{2}_{s}, \alpha\in A_{s+1}$,  taking values in $A'_{s-1}$. Thus we can apply standard theory to study the solutions of the torsion-free equation for nearby boundary data. In particular we have by the implicit function theorem:
\begin{thm} 
If the vector space $H_{\phi}$ is zero then for smooth $\theta$ which are sufficiently small in $L^{2}_{s}$ there is a unique solution of the torsion-free equation in ${\cal Q}_{\hatrho(\theta)}$ close to $\rho+\theta$. 
 \end{thm}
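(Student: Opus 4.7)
The plan is a direct application of the Banach-space implicit function theorem to the family ${\cal F}_{\theta}: A_{s+1}\to A'_{s-1}$ constructed in the paragraph preceding the theorem, where $\theta$ varies in the Banach space $K_{s}=\{\theta\in L^{2}_{s}(\Omega^{3}(M)): d\theta=0\}$. The target is to produce a unique smooth map $\theta\mapsto\alpha(\theta)$ whose zero locus gives the desired solutions, and then invoke elliptic regularity.

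First, I would pin down the linearisation at the basepoint. By Proposition 8, $D_{\alpha}{\cal F}_{0}|_{\alpha=0}=-\Delta:A_{s+1}\to A'_{s-1}$, whose kernel ${\cal H}_{\phi}$ is carried isomorphically onto $H_{\phi}\subset T{\cal Q}$ by the map of that proposition. The hypothesis $H_{\phi}=0$ therefore forces ${\cal H}_{\phi}=0$. Self-adjointness from Theorem 1 makes the cokernel vanish as well, so $-\Delta$ is a bounded linear isomorphism $A_{s+1}\to A'_{s-1}$.

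Second, I would verify that $(\theta,\alpha)\mapsto{\cal F}_{\theta}(\alpha)$ is smooth from a neighbourhood of $0$ in $K_{s}\times A_{s+1}$ to $A'_{s-1}$, with ${\cal F}_{0}(0)=0$. The nonlinear piece $W(\phi+\theta+dF\alpha)$ is a pointwise algebraic expression in $\Theta$ and $d$, so smoothness in the Sobolev topology follows from the Banach algebra property of $L^{2}_{s}$ at $s=5$ combined with the bounded linearity of $F$ established via Lemma 5. The projection $\Pi_{\tilde{\phi}}$ is smooth in $\tilde{\phi}=\phi+\theta+dF\alpha$ because both the graph bundle map $S_{\tilde{\phi}}$ of Lemma 6 and the Green operator of Proposition 2 depend smoothly on the background. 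The implicit function theorem now yields neighbourhoods $U\subset K_{s}$ and $V\subset A_{s+1}$ of the origin and a unique smooth $\alpha:U\to V$ with $\alpha(0)=0$ and ${\cal F}_{\theta}(\alpha(\theta))=0$. By Lemma 6 and Proposition 7, the class $[\phi+\theta+dF(\alpha(\theta))]\in{\cal Q}^{s}_{\hatrho(\theta)}$ is then the unique torsion-free solution in a neighbourhood of $[\phi+\theta]$; for smooth $\theta$, elliptic regularity applied to $d\Theta(\tilde{\phi})=0$ bootstraps the $L^{2}_{s+1}$ solution to a smooth $G_{2}$-structure.

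The only genuinely delicate point is the jointly smooth dependence of ${\cal F}_{\theta}$ on $(\theta,\alpha)$: one must check that $S_{\tilde{\phi}}$ and the Dirichlet Green operator entering $\Pi_{\tilde{\phi}}$ vary smoothly with $\tilde{\phi}$ in $L^{2}_{s}$, uniformly on a neighbourhood of $\phi$. Given the groundwork of Section 4.1 and Lemma 6 this is essentially routine, but it is the one nontrivial analytic check; everything else is formal once $-\Delta$ is invertible.
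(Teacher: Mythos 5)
Your proposal matches the paper's argument: the paper likewise introduces the family ${\cal F}_{\theta}(\alpha)=\Pi_{\tilde{\phi}}\bigl(W(\phi+\theta+dF\alpha)\bigr)$, notes it is smooth in $(\theta,\alpha)$ with derivative $-\Delta:A_{s+1}\rightarrow A'_{s-1}$ at the basepoint (Proposition 8), identifies kernel and cokernel with $H_{\phi}$ via self-adjointness, and concludes by the implicit function theorem together with the gauge-fixing identification of Proposition 7 and elliptic regularity of $L^{2}_{s+1}$ solutions. This is essentially the same proof, with your remarks on joint smoothness of $\Pi_{\tilde{\phi}}$ simply making explicit what the paper asserts in passing.
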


 More generally, if $H_{\phi}$ is not zero the standard theory of Fredholm maps gives a finite-dimensional \lq\lq Kuranishi model'' for the solutions of the torsion-free equation.

This discussion of the local structure in ${\cal Q}_{\hatrho}$ can be extended to the Hitchin functional. We define a quadratic form on $A$ by
  \begin{equation}  q_{\phi}(\alpha)= -\langle \alpha, \Delta \alpha\rangle. \end{equation}
So the eigenvalues associated to  our boundary value, which we discussed in Section 3, are the eigenvalues of the quadratic form $q_{\phi}$ relative to the $L^{2}$ form.  
Standard theory (as described in \cite{kn:D-1}, Proposition 2.5,  for example) gives a diffeomorphism from one neighbourhood of the origin in $A_{s+1}$ to another which takes the Hitchin functional to a sum $f\circ \pi  + \stthird q_{\phi}$ where $f$ is a real-valued function on ${\cal H}_{\phi}$ and $\pi: A_{s+1}\rightarrow {\cal H}_{\phi}$ is $L^{2}$ projection. 

We emphasise that the crucial difference in this case of manifolds with boundary, compared with the closed case,  is that the quadratic form $q_{\phi}$ is not manifestly negative definite due to the boundary term in Proposition 5.

\section{Examples and questions}

We have seen that the deformation theory for our boundary value problem is governed by a finite-dimensional vector space
$$  H_{\phi}= \frac{\{ d\alpha: \alpha\vert_{\partial M}=0, L(\alpha)=0\}}{ \{d\alpha_{7}: \alpha_{7}\vert_{\partial M}=0\}}. $$
Now we define another vector space
\begin{equation} K_{\phi}=\frac{ \{ d\alpha_{7}: \alpha_{7}\in \Omega^{2}_{7}(M)\}\cap\{ d\alpha : \alpha\in \Omega^{2}(M), \alpha\vert_{\partial M}=0\}}{ \{d\alpha_{7}: \alpha_{7}\vert_{\partial M}=0\} }. \end{equation}
There is an obvious map $E: K_{\phi}\rightarrow H_{\phi}$ arising from the fact that $L(\alpha_{7})$ vanishes for any $\alpha_{7}\in \Omega^{2}_{7}$ and it also obvious that this map $E$ is injective.  

 Next recall that we have a decomposition of forms on the boundary 
 $$\Omega^{2}(\partial M)= \Omega^{2,\partial}_{ 1}\oplus\Omega^{2,\partial}_{6}\oplus \Omega^{2,\partial}_{ 8},$$
and write $$\Omega^{2,\partial}_{7}= \Omega^{2,\partial}_{1}\oplus \Omega^{2,\partial}_{ 6}.$$
Thus the forms in $\Omega^{2,\partial}_{7}$ are exactly the restrictions to the boundary of forms in $\Omega^{2}_{7}(M)$. We define
$$  V_{\phi}= \{\theta\in \Omega^{2,\partial}_{7}: d\theta=0\}, $$
and $$W_{\phi}= \{ \alpha_{7}\in \Omega^{2}_{7}(M): d\alpha_{7}=0\}. $$ So there is a restriction map
$$ \iota: W_{\phi}\rightarrow V_{\phi}. $$
The space $W_{\phi}$ corresponds to the vector fields on $M$ preserving $\phi$ (which are Killing fields for the metric) and, as we have noted in the proof of Lemma 6, the map $\iota$ is an injection.

The exact sequence of the pair $(M,\partial M)$ gives a co-boundary map from $H^{2}(\partial M)$ to $H^{3}(M,\partial M)$. Since an element of $V_{\phi}$ defines a class in $H^{2}(\partial M)$ we have a map, which we denote by
$$   p: V_{\phi}\rightarrow H^{3}(M,\partial M).$$
\begin{prop}
There is an isomorphism
$$ K_{\phi}\cong {\rm ker}\ p/({\rm ker}\ p\cap {\rm Im}\  \iota). $$
\end{prop}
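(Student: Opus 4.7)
The plan is to build an explicit isomorphism by restriction to the boundary. Given a representative $d\alpha_{7}$ of a class in $K_{\phi}$, with $\alpha_{7}\in\Omega^{2}_{7}(M)$ and $d\alpha_{7}=d\alpha$ for some $\alpha\in\Omega^{2}(M)$ with $\alpha\vert_{\partial M}=0$, I would define
$$\Psi([d\alpha_{7}])\;=\;\alpha_{7}\vert_{\partial M}\quad\text{mod}\ \mathrm{Im}\,\iota.$$
First one checks that $\alpha_{7}\vert_{\partial M}\in V_{\phi}$: pulling back the equality $d\alpha_{7}=d\alpha$ to the boundary and using $\alpha\vert_{\partial M}=0$ gives $d(\alpha_{7}\vert_{\partial M})=0$, while $\alpha_{7}\vert_{\partial M}$ lies in $\Omega^{2,\partial}_{7}$ by construction.

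The key identification, which is also the main technical point, is to recognise the map $p$ as the de Rham connecting homomorphism in the long exact sequence of $(M,\partial M)$, expressed via extensions: for $\theta\in V_{\phi}$, choose any extension $\tilde\theta\in\Omega^{2}(M)$ with $\tilde\theta\vert_{\partial M}=\theta$; then $d\tilde\theta$ vanishes on the boundary and represents $p(\theta)\in H^{3}(M,\partial M)$. With this in hand, $p(\alpha_{7}\vert_{\partial M})$ is represented by $d\alpha_{7}$, which by hypothesis equals $d\alpha$ with $\alpha\vert_{\partial M}=0$; hence $p(\alpha_{7}\vert_{\partial M})=0$, so $\Psi$ lands in $\ker p$ modulo $\mathrm{Im}\,\iota$. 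Well-definedness of $\Psi$ on $K_{\phi}$ is also immediate: if $d\alpha_{7}=d\beta_{7}$ with $\beta_{7}\vert_{\partial M}=0$, then $\alpha_{7}-\beta_{7}\in W_{\phi}$ and $\iota(\alpha_{7}-\beta_{7})=\alpha_{7}\vert_{\partial M}$, so the boundary value is killed in the quotient.

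For surjectivity, given $\theta\in\ker p$, use the fact quoted in the excerpt that $\Omega^{2,\partial}_{7}$ consists exactly of restrictions from $\Omega^{2}_{7}(M)$ to choose an extension $\alpha_{7}\in\Omega^{2}_{7}(M)$ with $\alpha_{7}\vert_{\partial M}=\theta$. Because $p(\theta)=0$, the class of $d\alpha_{7}$ in $H^{3}(M,\partial M)$ vanishes, so there is an $\alpha\in\Omega^{2}(M)$ with $\alpha\vert_{\partial M}=0$ and $d\alpha=d\alpha_{7}$; thus $d\alpha_{7}$ represents a class in $K_{\phi}$ mapping to $\theta$. For injectivity, suppose $\Psi([d\alpha_{7}])=0$, i.e.\ $\alpha_{7}\vert_{\partial M}=\iota(\gamma)=\gamma\vert_{\partial M}$ for some $\gamma\in W_{\phi}$. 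Then $\tilde\alpha_{7}:=\alpha_{7}-\gamma\in\Omega^{2}_{7}(M)$ satisfies $\tilde\alpha_{7}\vert_{\partial M}=0$ and $d\tilde\alpha_{7}=d\alpha_{7}$, so $d\alpha_{7}$ is in the denominator defining $K_{\phi}$ and the class vanishes.

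The one step that needs genuine care, rather than being a formal bookkeeping exercise, is the identification of $p$ with the geometric coboundary described above; everything else then reduces to restriction and extension arguments which, given the $\Omega^{2}_{7}$-extension property already noted in the excerpt, are routine.
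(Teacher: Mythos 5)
Your proposal is correct and follows essentially the same route as the paper: restrict $\alpha_{7}$ to the boundary and identify $p$ with the extend-and-differentiate description of the connecting map, which the paper simply recalls as the definition of the boundary map. The only difference is completeness: the paper proves only the implication it later uses (${\rm ker}\, p \subset {\rm Im}\,\iota$ forces $K_{\phi}=0$, which is your injectivity step under that hypothesis), while you also check well-definedness and surjectivity to obtain the full isomorphism; note too that ${\rm Im}\,\iota \subset {\rm ker}\, p$ automatically (since $d\gamma=0$ for $\gamma\in W_{\phi}$), so your quotient by ${\rm Im}\,\iota$ agrees with the one in the statement.
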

\begin{proof}
For simplicity, we just prove that if the right hand side is zero then so also is $K_{\phi}$ (which is what we will use). So suppose that we have a pair $\alpha, \alpha_{7}$ representing a class in $K_{\phi}$---i.e $d\alpha=d\alpha_{7}$ and $\alpha\vert_{\partial =0}$. Thus the restriction of $\alpha_{7}$ to the boundary is a $2$-form, $\theta$ say,  in $V_{\phi}$. Recall that in general the definition of the boundary map is that we extend $\theta$ to some 2-form $\Theta$ over $M$ and take the cohomology class of $d\Theta$ in $H^{3}(M,\partial M)$. In our case we can take $\Theta=\alpha_{7}$ and the fact that $d\alpha_{7}=d\alpha$ with $\alpha\vert_{M}=0$  says exactly  that $p(\theta)=0$. So by assumption
$\theta$ lies in the image of $\iota$, say $\theta=\iota(\tilde{\alpha}_{7})$. But now we can replace $\alpha_{7}$ by $\alpha_{7}-\tilde{\alpha}_{7}$, representing the same class in $K_{\phi}$. Thus we may as well suppose that $\alpha_{7}$ restricts to zero on the boundary, which means that the class in $K_{\phi}$ is zero.\qed\end{proof}

\begin{thm}

Suppose that $M$ is a domain with smooth boundary in a closed manifold with torsion-free $G_{2}$ structure $(M^{+},\phi^{+})$ and that $\phi$ is the restriction of $\phi^{+}$.
Then $E:K_{\phi}\rightarrow H_{\phi}$ is an isomorphism.
\end{thm}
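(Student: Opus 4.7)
The map $E$ is already known to be injective, so the task is to establish surjectivity. Starting from a class $[d\gamma]\in H_\phi$, I will pass to its $A$-representative via Proposition 3 and Proposition 8: this replaces $\gamma$ by a unique $\alpha\in\mathcal{H}_\phi$, i.e.\ $\alpha\in\Omega^2_{14}(M)$ with $d^*\alpha=0$, $\Delta\alpha=0$, and $\alpha\Vert_{\partial,8}=0$. Since a closed $2$-form $\eta$ on $M$ with $\eta_{14}=\alpha$ would give $d\alpha=-d\eta_7\in d\Omega^2_7(M)$, the theorem reduces to producing such an $\eta$.

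The first move is to isolate the model situation $\alpha\Vert_{\partial M}=0$. In that case the $3$-form $d\alpha$ is closed, co-closed (using $\Delta\alpha=0$ together with $d^*\alpha=0$) and pulls back to zero on $\partial M$. Proposition 2 then places $d\alpha$ in the Hodge-theoretic avatar of $H^3(M,\partial M)$; but $d\alpha$ is exact, hence zero, and $\alpha$ itself is the required closed $2$-form. The only obstruction to applying this argument is the \emph{boundary datum} $a=\alpha\Vert_{\partial,6}\in\Omega^1(\partial M)$, which by Lemma 1 encodes all of $\alpha\Vert_{\partial M}$.

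To kill $a$, I will exploit the ambient $(M^+,\phi^+)$. The form $a$ corresponds, via $a=-i_{v_\partial}\omega$ with $\omega=i_\nu\phi^+$, to a tangent vector field $v_\partial$ on $\partial M$; extend $v_\partial$ smoothly to a vector field $v$ on $M^+$ that remains tangent to $\partial M$. Then $\beta_7=(i_v\phi^+)|_M\in\Omega^2_7(M)$ has pull-back $-\chi_6(a)$ to $\partial M$, matching $\alpha|_{\partial M}$, so the $2$-form $\tilde\alpha=\alpha-\beta_7$ pulls back to zero on $\partial M$ and still satisfies $L(\tilde\alpha)=L(\alpha)=0$ (since $L$ vanishes on $\Omega^2_7$). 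Hence $[d\tilde\alpha]\in H_\phi$ and the difference $[d\gamma]-[d\tilde\alpha]=[d\beta_7]$ is visibly represented by an $\Omega^2_7$-form. A further closed correction $\tilde\beta$ on $M$, with $\tilde\beta\Vert_{\partial M}=-\alpha\Vert_{\partial M}$, is needed so that $[d\beta_7]$ sits in the image of $E$; the existence of such a $\tilde\beta$ is supplied by restricting an appropriate closed $2$-form on $M^+$, available precisely because the vector field $v$ and its associated $2$-form $i_v\phi^+$ are globally defined on the closed manifold $M^+$.

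With this reduction, the $A$-representative $\alpha'\in\mathcal{H}_\phi$ of $[d\tilde\alpha]$ inherits $\alpha'\Vert_{\partial,6}=0$ from the boundary cancellation built into $\tilde\alpha$ (this is where I use that the gauge-fixing in the construction of Proposition 3 does not reintroduce a boundary $\partial,6$-component, the crucial point being that $G$ preserves the Dirichlet boundary condition on $\eta$). The first case then applies to $\alpha'$, giving $[d\tilde\alpha]=0$ in $H_\phi$, so that $[d\gamma]=[d\beta_7]\in E(K_\phi)$. The main obstacle in executing this programme is verifying the last claim: that the gauge-fixing which converts $\tilde\alpha$ into its $A$-representative really does preserve the vanishing of the $\partial,6$-boundary component. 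This is where the hypothesis that $M$ is a domain in a torsion-free $(M^+,\phi^+)$ is used essentially, as it ensures that the correction data on $\partial M$ extend globally and compatibly with the closed-manifold Hodge theory on $M^+$.
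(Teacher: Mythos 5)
Your opening step is fine: if the $A$-representative $\alpha\in{\cal H}_{\phi}$ happens to satisfy $\alpha\vert_{\partial M}=0$, then $d\alpha$ is closed, co-closed and pulls back to zero, and since it is $d$ of a form vanishing on the boundary its class in $H^{3}(M,\partial M)$ is zero, so $d\alpha=0$ by Proposition 2 and the class lies in $E(K_{\phi})$. The genuine gap is the reduction to this case. Subtracting $\beta_{7}=i_{v}\phi^{+}\vert_{M}\in\Omega^{2}_{7}$ does not change the $\Lambda^{2}_{14}$-component, and since $F_{A}:A\rightarrow T{\cal Q}$ is an isomorphism the $A$-representative of a class is \emph{unique}: applying the construction of Proposition 3 to $\tilde{\alpha}=\alpha-\beta_{7}$ one has $\gamma_{14}=\alpha$ with $d^{*}\gamma_{14}=0$, hence $\eta=G(0)=0$, and the representative of $[d\tilde{\alpha}]$ is again $\alpha$ itself, carrying the same boundary datum $a=\alpha\Vert_{\partial,6}$. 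So your crucial claim that the gauge-fixing "does not reintroduce a $\partial,6$-component" fails: it never removed it. Worse, $[d\tilde{\alpha}]=[d\gamma]$ exactly in $H_{\phi}$, because the difference $d(\beta_{7}+\beta_{7}')$ (with $\beta_{7}'$ the $7$-form completing $\alpha$ to a form vanishing on $\partial M$) is $d$ of a $7$-form whose pullback to $\partial M$ vanishes, hence lies in the denominator of (17). Thus if your scheme worked it would prove $[d\gamma]=0$ for \emph{every} class, i.e.\ $H_{\phi}=0$ whenever $M$ embeds in a closed torsion-free $G_{2}$-manifold; this contradicts the annular example in ${\bf R}^{7}\subset T^{7}$ cited right after Theorem 3, where $K_{\phi}\neq 0$ and hence $H_{\phi}\neq 0$. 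A telltale sign is that your use of $(M^{+},\phi^{+})$ amounts only to extending a boundary vector field, which needs no ambient closed manifold at all (a collar inside $M$ would do), so the hypothesis is not entering where it must.

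The paper uses the closed manifold in an essentially different way, which is the missing idea: after modifying $\alpha$ by an exact form so that $\alpha\Vert_{\partial M}=0$, extend it by zero to a Lipschitz ($L^{p}_{1}$) form $\ualpha$ on $M^{+}$; run the closed-manifold Hodge theory to replace $\ualpha_{14}$ by a co-closed form $\tilde{\alpha}_{14}$ orthogonal to the harmonic space; use $L(\alpha)=0$ to conclude $Q(\ualpha)=0$ and hence, since $d^{*}\tilde{\alpha}_{14}=0$ forces $Q(\tilde{\alpha}_{14})$ to control $\Vert d\tilde{\alpha}_{14}\Vert^{2}$, that $\tilde{\alpha}_{14}=0$. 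This yields $d\ualpha=d\tilde{\alpha}_{7}$ for a global section $\tilde{\alpha}_{7}$ of $\Lambda^{2}_{7}$, and a separate boundary regularity lemma (overdetermined ellipticity of $d$ on $\Omega^{2,\partial}_{7}$ plus elliptic regularity for the Dirichlet-type problem) shows $\tilde{\alpha}_{7}$ is smooth up to $\partial M$, exhibiting the given class as $E$ of a class in $K_{\phi}$. None of this global extension-and-cancellation mechanism is present in your proposal, and without it the surjectivity of $E$ is not established.
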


 As we mentioned before, injectivity is trivial so we have to prove that the map is surjective. The proof is simply to extend deformations from $M$ to $M^{+}$ and then apply the standard theory on the closed manifold. The only complication is that we have to work with forms that are not smooth.  

Let $\alpha\in \Omega^{2}(M)$ represent a class in $H_{\phi}$, so $\alpha\vert_{\partial M}=0$ and $L(\alpha)=0$. The first step is to show that we can suppose that$\alpha$ satisfies the stronger condition $\alpha\Vert_{\partial M}=0$. Indeed suppose that in a normal product neighbourhood, with normal coordinate $t$,
$$   \alpha =  a_{t} dt + b_{t}, $$
where $a_{t}, b_{t}$ are $t$-dependent forms on $\partial M$. The hypothesis is that $b_{0}=0$. Let $\epsilon$ be the $1$-form $t a_{0} $ in this product neighbourhood, extended smoothly over $M$. Then $d\epsilon= t da_{0} t - a_{0} dt$ and $\alpha'=\alpha+ d\epsilon$ satisfies $\alpha'\Vert_{\partial M}=0$. Since $\alpha'$ represents the same class in $H_{\phi}$ we may as well suppose that $\alpha\Vert_{\partial M}=0$.

Next let $\ualpha$ be the 2-form on $M^{+}$ equal to $\alpha$ on $M$ and extended by zero over the complement. This extension is not smooth but it is Lipschitz, so $\ualpha\in L^{p}_{1}(M^{+})$ for all $p$. We apply the standard theory, as sketched in Section 2, to $\ualpha$. Thus we solve the equation $\Delta \eta= d^{*}\ualpha_{14}$.  The Lipschitz condition means that $d^{*}\ualpha_{14}$ has no distributional component and  standard elliptic theory gives $\eta\in L^{p}_{2}(M^{+})$ for all $p$.  We find a harmonic form $h_{14}$ on $M^{+}$ such that $\tilde{\alpha}_{14}= \ualpha_{14} -d_{14} \eta +h_{14}$ is orthogonal to the harmonic space. Then by construction $d^{*}\tilde{\alpha}_{14}=0$.
Now we bring in  the hypothesis that $L(\alpha)=0$. Taking the inner product with $\alpha$ this implies that $Q(\ualpha)=0$ (since the relevant boundary term vanishes). The arguments for smooth forms all extend to $L^{p}_{1}$ forms,  and we deduce that $Q(\tilde{\alpha}_{14})= 0$. But since $d^{*}\tilde{\alpha}_{14}=0$ we have $Q(\tilde{\alpha}_{14})= \Vert d\tilde{\alpha}_{14}\Vert^{2}$ so $d\tilde{\alpha_{14}}=0$ and hence $\tilde{\alpha}_{14}$ is harmonic. But this means that $\tilde{\alpha}_{14}$ vanishes, since it was chosen to be orthogonal to the harmonic forms.

Then $$d\ualpha= d(\ualpha_{7}+ \ualpha_{14}+ d_{7}\eta +d_{14}\eta + h)=d(\tilde{\alpha}_{7})$$
with $\tilde{\alpha}_{7}= \ualpha_{7}+ d_{7}\eta$.  If we know that $\tilde{\alpha}_{7}$ is smooth on the manifold-with-boundary  $M$, then we have shown that the pair $\alpha,\tilde{\alpha}_{7}$ represents a class in $K_{\phi}$ mapping by $E$ to the given class in $H_{\phi}$,  thus completing the proof of the theorem.  This smoothness follows from the following Lemma.

\begin{lem} Suppose $\tilde{\alpha}_{7}$ is an $L^{p}_{1}$ section of $\Lambda^{2}_{7}$ over $M^{+}$ such that $d\tilde{\alpha}_{7}$ is smooth up to the boundary on $M$. Then $\tilde{\alpha}_{7}$ is also smooth up to the boundary on $M$.
\end{lem}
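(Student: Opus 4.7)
Identify sections of $\Lambda^{2}_{7}$ on $M^{+}$ with vector fields via $v \mapsto i_{v}\phi$; under this identification $\tilde{\alpha}_{7}$ corresponds to an $L^{p}_{1}$ vector field $v$ on $M^{+}$, and the hypothesis reads that $d\tilde{\alpha}_{7} = \mathcal{L}_{v}\phi$ is smooth on $\overline{M}$.

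Interior smoothness on $M$ comes for free: the second order operator $\pi_{7}d^{*}d$ on $\Omega^{2}_{7}$ is elliptic (as shown in the proof of Lemma 5), and applied to $\tilde{\alpha}_{7}$ it yields $\pi_{7}d^{*}u$ with $u = d\tilde{\alpha}_{7}$ smooth on $\overline{M}$. Standard interior elliptic regularity then gives $\tilde{\alpha}_{7}$ smooth on the interior of $M$.

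For regularity up to $\partial M$ the plan is in two sub-steps. First, I would show that the boundary trace $\tilde{\alpha}_{7}\vert_{\partial M}$ (well defined in an $L^{p}$-Sobolev class since $\tilde{\alpha}_{7} \in L^{p}_{1}(M^{+})$) is smooth on $\partial M$. Pulling back the equation $d\tilde{\alpha}_{7} = u$ to $\partial M$ gives $d_{\partial M}(\tilde{\alpha}_{7}\vert_{\partial M}) = u\vert_{\partial M}$, which is smooth on $\partial M$. By Lemma 1 the algebraic constraint $\tilde{\alpha}_{7} \in \Omega^{2}_{7}$ forces $\tilde{\alpha}_{7}\vert_{\partial M}$ to lie in the $7$-dimensional subbundle $\Lambda^{2,\partial}_{7} = \Lambda^{2,\partial}_{1} \oplus \Lambda^{2,\partial}_{6}$ of $\Lambda^{2}(\partial M)$. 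A direct computation in the flat model of Lemma 4, using the explicit formula for $\chi_{6}$ given there, shows that for any nonzero cotangent vector $\eta$ the condition $\eta \wedge (c\omega + \chi_{6}(a)) = 0$ forces $c = 0$ and $a = 0$; hence $d_{\partial M}$ restricted to $\Lambda^{2,\partial}_{7}$ has injective principal symbol, so it is overdetermined elliptic on the closed $6$-manifold $\partial M$. Standard elliptic regularity on $\partial M$ then gives $\tilde{\alpha}_{7}\vert_{\partial M}$ smooth.

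With smooth Dirichlet data $\tilde{\alpha}_{7}\vert_{\partial M}$ on $\partial M$ and smooth right-hand side $\pi_{7}d^{*}u$ on $\overline{M}$, I would invoke the elliptic boundary regularity theory for the Dirichlet problem for $\pi_{7}d^{*}d$ on $\Omega^{2}_{7}$ (whose invertibility underlies Lemma 5) to construct a smooth solution with these data, and then compare with $\tilde{\alpha}_{7}$ via the uniqueness in Lemma 5 (using that $\vert_{\partial M}$ and $\Vert_{\partial M}$ coincide on $\Omega^{2}_{7}$, as noted just after Lemma 1) to conclude that $\tilde{\alpha}_{7}$ itself is smooth on $\overline{M}$. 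The main obstacle is the symbol-injectivity check for $d_{\partial M}$ restricted to $\Lambda^{2,\partial}_{7}$, which is the boundary analogue of the overdetermined ellipticity of $d$ on $\Omega^{2}_{7}$ and is the one genuinely $G_{2}$-specific computation required in the argument.
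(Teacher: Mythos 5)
Your proposal is correct and follows essentially the same route as the paper: the key step in both is that the boundary trace of $\tilde{\alpha}_{7}$ lies in $\Omega^{2,\partial}_{7}$ and is smooth because $d_{\partial M}$ restricted to $\Lambda^{2,\partial}_{7}$ is overdetermined elliptic (your symbol check $\eta\wedge(c\omega+\chi_{6}(a))=0\Rightarrow c=0,\ a=0$ is exactly the fact the paper asserts without computation), followed by elliptic boundary regularity for a second-order problem with smooth data. The only difference is cosmetic: the paper bootstraps via the Hodge Laplacian with Dirichlet and $d^{*}$ boundary data, using $d_{7}=-\shalf\chi\circ d^{*}$ on $\Omega^{2}_{7}$ to see that $d^{*}\tilde{\alpha}_{7}$ is smooth, whereas you use the Dirichlet problem for $\pi_{7}d^{*}d$ on $\Omega^{2}_{7}$ underlying Lemma 5; both are valid.
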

 \begin{proof}Let $D$ be the operator $d:\Omega^{2}_{\partial, 7}\rightarrow \Omega^{3}(\partial M)$. This is an overdetermined elliptic operator, so if $\theta$ lies in some Sobolev space on $\partial M$ and $D\theta$ is smooth on $\partial M$ then $\theta$ is also smooth. We apply this to the restriction of $\tilde{\alpha}_{7}$ to $\partial M$. This lies  {\it a priori} in a Sobolev space $L^{p}_{1-1/p}$ but now we see that it is a smooth form on $\partial M$. The relation between $d^{*}$ and $d_{7}$ on $\Omega^{2}_{7}$ shows that $d^{*}\tilde{\alpha}_{7}$ is also smooth up to the boundary on $M$. So the same is true of $\Delta\tilde{\alpha}_{7}$. Thus $\tilde{\alpha}_{7}$ solves an elliptic boundary value problem
$$  \Delta\tilde{\alpha}_{7}= \rho, \tilde{\alpha}_{7}\vert_{\partial M}=\sigma, d^{*}\tilde{\alpha}_{7}\vert_{\partial M} =\tau, $$
with $\rho$ smooth up to the boundary on $M$ and $\sigma, \tau$ smooth on $\partial M$. It follows by elliptic regularity that $\tilde{\alpha}_{7}$ is smooth up to the boundary on $M$.  \qed\end{proof}

The example discussed in \cite{kn:D} is an annular region in ${\bf R}^{7}$ which can be embedded in a compact torus,  so Theorem 3 applies. In that example $K_{\phi}$ is not zero,  so the same is true of $H_{\phi}$ and the deformation problem is obstructed.

In a similar vein we have

\

\begin{prop} For $(M,\phi)\subset (M^{+},\phi^{+})$ as in Theorem 3 the quadratic form $q_{\phi}$ is negative semi-definite.
\end{prop}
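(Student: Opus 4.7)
The plan is to reduce the semi-definiteness of $q_{\phi}$ on $M$ to the standard closed-case negativity of the Hitchin Hessian on $M^{+}$, by rewriting $q_{\phi}(\alpha)$ as an interior $Q$-integral of a representative vanishing on $\partial M$, then extending by zero to $M^{+}$ and projecting onto the standard slice.

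Given $\alpha\in A$, Proposition 2 gives $L\alpha=-\Delta\alpha$ on the slice (since $d^{*}\alpha=0$), so $q_{\phi}(\alpha)=-\langle\alpha,\Delta\alpha\rangle=\langle L\alpha,\alpha\rangle$. Using Lemma 1, I pick $\alpha_{7}\in\Omega^{2}_{7}(M)$ with $(\alpha+\alpha_{7})|_{\partial M}=0$ and set $\gamma=\alpha+\alpha_{7}$; identity (24), combined with $L\alpha_{7}=0$ and the fact that $L\alpha\in\Omega^{2}_{14}$ is $L^{2}$-orthogonal to $\Omega^{2}_{7}$, then gives
\[
q_{\phi}(\alpha)=\langle L\alpha,\alpha\rangle=\langle L\gamma,\gamma\rangle=Q(\gamma).
\]
Since $Q$ depends only on $d\gamma$, I further replace $\gamma$ by $\gamma'=\gamma+d\epsilon$ without changing $Q$; a short collar-neighborhood computation shows $\epsilon\in\Omega^{1}(M)$ can be chosen so that the surviving (normal) piece of $\gamma\Vert_{\partial M}$ is killed, ensuring the stronger vanishing $\gamma'\Vert_{\partial M}=0$. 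The zero extension $\underline{\gamma'}$ to $M^{+}$ is then Lipschitz, lies in $L^{p}_{1}(\Omega^{2}(M^{+}))$ for all $p$, and has no distributional contribution to $d\underline{\gamma'}$ across $\partial M$, so $Q(\underline{\gamma'})=Q(\gamma')=Q(\gamma)$.

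I now apply standard Hodge theory on the closed manifold $M^{+}$, following the proof of Theorem 3. Solving $d^{*}d\eta=\frac{3}{2}d^{*}(\underline{\gamma'})_{14}$, which is solvable by Proposition 1 item 3 because the right-hand side annihilates closed $1$-forms, produces $\eta\in L^{p}_{2}(\Omega^{1}(M^{+}))$, and $\tilde\alpha:=(\underline{\gamma'})_{14}-d_{14}\eta$ lies in $L^{p}_{1}\cap\ker d^{*}\cap\Omega^{2}_{14}(M^{+})$. A direct expansion gives
\[
\underline{\gamma'}=\tilde\alpha+\zeta+d\eta,\qquad \zeta:=(\underline{\gamma'})_{7}-d_{7}\eta\in L^{p}_{1}(\Omega^{2}_{7}(M^{+})),
\]
so $d\underline{\gamma'}=d(\tilde\alpha+\zeta)$ and therefore $Q(\underline{\gamma'})=Q(\tilde\alpha+\zeta)$. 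On the closed manifold one has $Q(\zeta)=\langle L\zeta,\zeta\rangle=0$ since $L$ vanishes on $\Omega^{2}_{7}$, and the cross term $\langle\tilde\alpha,\zeta\rangle_{Q}=\langle L\tilde\alpha,\zeta\rangle=0$ since $L\tilde\alpha\in\Omega^{2}_{14}$ is $L^{2}$-orthogonal to $\zeta$; hence $Q(\underline{\gamma'})=Q(\tilde\alpha)$. The slice identity on $M^{+}$ then gives $Q(\tilde\alpha)=-\|d\tilde\alpha\|^{2}\leq 0$, and chaining the equalities yields $q_{\phi}(\alpha)=Q(\gamma)=Q(\underline{\gamma'})=Q(\tilde\alpha)\leq 0$.

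The main obstacle is the opening algebraic step: recognising that the boundary term in Proposition 5, which otherwise spoils manifest negativity, can be absorbed into an interior integral upon passing from $\alpha$ to the representative $\gamma=\alpha+\alpha_{7}$ that vanishes on $\partial M$. Once the identity $q_{\phi}(\alpha)=Q(\gamma)$ is in place, the rest of the argument proceeds in parallel with the proof of Theorem 3, with the $L^{p}_{1}$-regularity of the zero extension handled by density of smooth forms and standard elliptic regularity on the closed manifold $M^{+}$.
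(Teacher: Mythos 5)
Your argument is correct and is essentially the proof the paper has in mind: the paper simply cites Proposition 1 of \cite{kn:D}, whose content is exactly this reduction --- rewrite $q_{\phi}(\alpha)$ as $Q(\gamma)$ for a representative $\gamma$ vanishing on $\partial M$, extend by zero to the closed manifold $M^{+}$, and invoke the negative semi-definiteness of the Hitchin Hessian on the closed manifold via the slice $\ker d^{*}\subset\Omega^{2}_{14}$. Your bookkeeping with the Lipschitz zero extension and the $L^{p}_{1}$ Hodge-theoretic splitting on $M^{+}$ follows the same lines as the paper's own proof of Theorem 3, so the route is the intended one rather than a genuinely different argument.
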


This is essentially the same (in slightly different language) as \cite{kn:D}, Proposition 1. 

\

These results raise the following questions.

\

{\bf Question 1}
{\it Is it true that for all $(M,\phi)$  we have $H_{\phi}=K_{\phi}$ ?}

\

{\bf Question 2} {\it Is it true that for all $(M,\phi)$  the form $q_{\phi}$ is negative semi-definite?} 

\

The author has spent some effort attempting to answer these questions,  without success. By the same argument as in Proposition 1 of \cite{kn:D} an equivalent form of Question 2 is to ask whether the inequality 
$$   \Vert d_{27}\alpha_{14}\Vert\geq \Vert d_{7}\alpha_{14}\Vert $$
holds for all {\it compactly supported} $\alpha_{14}\in \Omega^{2}_{14}$.

The equation $D\theta=0$ (in the notation of Lemma 7) is highly overdetermined, so one expects that typically the space $V_{\phi}$ is $0$, and hence also $K_{\phi}$. If the answer to Question 1 above is affirmative it would follow that in most situations the space $H_{\phi}$ is zero i.e. that the same three facts for the closed manifold theory reviewed in the introduction hold for the boundary value problem, in most situations.

Leaving aside this question aside: Theorem 3  and Proposition 9 can be used to supply examples where $H_{\phi}$ vanishes. We will just consider one class of examples here. Let $N$ be a closed Calabi-Yau $3$-fold i.e. a complex 3-fold with K\"ahler form $\omega$ and holomorphic $3$-form $\Theta$ such that $\omega$ and $\rho={\rm Im}(\Theta)$ are equivalent to the standard model (4) at each point. For $L>0$ let $M_{L}$ be the manifold with boundary
$N\times [0,L]$ with $3$-form $\phi= \omega dt + \rho$. 

\begin{lem}
For this $\phi$ on $M_{L}$ we have $H_{\phi}=0$.
\end{lem}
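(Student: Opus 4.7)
The plan is to reduce the problem on $M_L = N \times [0,L]$ to a spectral computation on $N$ via separation of variables.

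First I would use Lemma 1 pointwise, with $\partial_t$ playing the role of the normal direction, to decompose any $\alpha \in \mathcal{H}_\phi$ uniquely as $\alpha = 2 a \wedge dt - \chi_6(a) + \beta$, where $a(\cdot, t)$ is a $t$-dependent 1-form on $N$ and $\beta(\cdot, t)$ a $t$-dependent section of $\Lambda^{2,\partial}_8$. The boundary condition $\alpha\Vert_{\partial, 8} = 0$ translates to the Dirichlet data $\beta(\cdot, 0) = \beta(\cdot, L) = 0$. Since $\omega$ and $\rho$ are parallel on the Calabi-Yau $N$, the bundle map $\chi_6$ commutes with $\Delta_N$, and combined with the product identity $\Delta_M = \Delta_N - \partial_t^2$ the equation $\Delta\alpha = 0$ decouples into $\ddot a = \Delta_N a$ and $\ddot\beta = \Delta_N\beta$. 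A standard eigenfunction expansion on $[0,L]$ with Dirichlet-Dirichlet data forces $\beta \equiv 0$.

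With $\beta$ eliminated, the product-structure formula for $d^*$ applied to $\alpha = 2a\wedge dt - \chi_6(a)$ reduces $d^*\alpha = 0$ to the pair $d^*_N a = 0$ and $d^*_N\chi_6(a) = 2\dot a$ (up to a sign which does not affect the argument). The operator $d^*_N\chi_6$ commutes with $\Delta_N$, so each $\lambda$-eigenspace $V_\lambda \subset \Omega^1(N)$ is preserved; writing $T_\lambda = d^*_N\chi_6\vert_{V_\lambda}$, the system $\dot a_\lambda = \shalf T_\lambda a_\lambda$ together with $\ddot a_\lambda = \lambda a_\lambda$ forces $T_\lambda^2 v = 4\lambda v$ on any nonzero trajectory vector $v$. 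For $\lambda > 0$ the Hodge identity applied to $\chi_6\phi$ (a $\lambda$-eigenform of the 2-form Laplacian on $N$ with $\Vert\chi_6\phi\Vert^2 = 4\Vert\phi\Vert^2$) reads
$$\Vert T_\lambda\phi\Vert^2 + \Vert d_N\chi_6\phi\Vert^2 = 4\lambda\Vert\phi\Vert^2,$$
so the hoped-for strict bound $\Vert T_\lambda\Vert^2 < 4\lambda$ reduces to showing $d_N\chi_6\phi \ne 0$ whenever $0 \ne \phi \in V_\lambda$.

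The hardest part is this rigidity step. If instead $d_N\chi_6\phi = 0$, then $\chi_6\phi$ is a closed section of $\Lambda^{2,\partial}_6 = \{\mathrm{Re}(\sigma):\sigma\in\Lambda^{2,0}\}$ on the compact Calabi-Yau $N$; a standard combination of Hodge theory on compact K\"ahler (a $d$-closed $(2,0)$-form is harmonic) with Bochner on Ricci-flat K\"ahler (harmonic $(p,0)$-forms are parallel) then forces $\chi_6\phi$, and hence $\phi$ itself, to be parallel, so $\lambda = 0$, a contradiction. Thus the strict inequality holds and no nonzero vector can satisfy $T_\lambda^2 v = 4\lambda v$, giving $a_\lambda \equiv 0$ for every $\lambda > 0$. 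For the harmonic modes $\lambda = 0$ the ODE forces $a$ to be constant in $t$, and the corresponding $\alpha = 2 a \wedge dt - \chi_6(a)$ is a scalar multiple of $\pi_{14}(a\wedge dt) \in \pi_{14}\mathcal{H}^2$; the requirement $\alpha \in A$ (orthogonal to $\pi_{14}\mathcal{H}^2$) kills this scalar, so $a \equiv 0$, $\alpha = 0$, and $H_\phi = 0$.
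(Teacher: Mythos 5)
Your argument is essentially correct, but it is a genuinely different proof from the paper's. The paper never separates variables: it embeds $M_{L}$ in the closed manifold $N\times S^{1}$ so that Theorem 3 applies, reducing the statement to $K_{\phi}=0$, and then computes $K_{\phi}$ purely cohomologically via Proposition 9 (injectivity of $h:\ker D\to H^{2}(N)$ by the Hodge--Riemann relations, identification of $\ker p$ with the diagonal copy of $V$, and surjectivity of $\iota$ onto it). Your route instead attacks ${\cal H}_{\phi}=A\cap\tilde H$ directly on the product: the Dirichlet elimination of the $\Lambda^{2,\partial}_{8}$ component, the reduction of $d^{*}\alpha=0$ to $2\dot a=\pm\,d^{*}_{N}\chi_{6}(a)$, and the resulting relation $T^{2}a_{\lambda}=4\lambda a_{\lambda}$ are all sound, and the final disposal of the $\lambda=0$ modes by orthogonality to $\pi_{14}{\cal H}^{2}$ (noting $2a\wedge dt-\chi_{6}(a)=3\pi_{14}(a\wedge dt)$) is correct. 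What your approach buys is independence from Theorem 3 and its extension-by-zero regularity argument, and it makes visible that the mechanism is exactly the factor $2$ versus the norm of $d^{*}\chi_{6}$, i.e.\ a global version of the symbol computation in Lemma 4; what the paper's route buys is a purely topological criterion ($K_{\phi}$) that works whenever Theorem 3 applies, with no spectral estimates at all.

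Two points need repair, though neither is fatal. First, the normalisation is wrong: from the explicit formula for $\chi_{6}$ one gets $\vert\chi_{6}(a)\vert^{2}=2\vert a\vert^{2}$, not $4\vert a\vert^{2}$ (that factor $4$ belongs to the seven-dimensional map $\chi$, not to $\chi_{6}$). The corrected identity reads $\Vert T_{\lambda}v\Vert^{2}+\Vert d_{N}\chi_{6}v\Vert^{2}=2\lambda\Vert v\Vert^{2}$, which gives $\Vert T_{\lambda}\Vert^{2}\le 2\lambda<4\lambda$ outright, so your rigidity step (closed real $(2,0)$-forms are parallel) becomes unnecessary --- the error makes your life harder, not easier, and the conclusion stands. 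Second, the commutation of $\chi_{6}$ (hence of $T=d^{*}_{N}\chi_{6}$) with the Hodge Laplacian does not follow merely from $\omega,\rho$ being parallel: parallelism gives commutation with $\nabla^{*}\nabla$, while the Hodge Laplacian on $2$-forms differs by a Weitzenb\"ock curvature term. You need that on a Calabi--Yau $3$-fold this term vanishes on sections of ${\rm Re}\,\Lambda^{2,0}$ (Ricci-flatness plus the fact that the curvature operator takes values in ${\frak su}(3)$ and so annihilates its orthogonal complement), or an equivalent Dolbeault argument; the same point (or the Bochner fact that harmonic $1$-forms are parallel) is also what kills the possible term linear in $t$ in the $\lambda=0$ modes, which your write-up passes over.
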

\begin{proof}
We can embed $M_{L}$ in a closed manifold $M\times S^{1}$ so Theorem 3 applies. Thus we have to identify the space $V_{\phi}$ which is the sum of two copies of the kernel ${\rm ker}\ D$ of the operator $D$ on $N$. Taking cohomology we have a map $h:{\rm ker}\  D \rightarrow H^{2}(N)$ and it follows from the Hodge decomposition that the image of this is $V= \bR [\omega] + H^{2,0}_{\bR}$ where $H^{2,0}_{\bR}$ denotes the real part of complex cohomology. We claim that $h$ is  injective,  so that ${\rm ker}\ D$ is isomorphic to $V$. Suppose that $\theta$ lies in the kernel of $h$, so $\theta= d\eta$ for some $1$-form $\eta$ on $N$. In other words the component of $d\eta$ in $\Lambda^{1,1}_{0}$ vanishes. The Hodge-Riemann bilinear relations give that
$$ d\eta \wedge d\eta \wedge  \omega= \left(2 \vert d_{1} \eta\vert^{2} +  \vert d_{6}\eta\vert^{2}\right) {\rm vol}_{6},$$
(where $d_{1}$ denotes the component in $\bR\omega$ and $d_{6}$ the component in $\Lambda^{2,0}_{\bR}$). So we have, by Stokes theorem,
$$  0=\int_{N} d\eta\wedge d\eta\wedge \omega= 2 \Vert d_{1} \eta\Vert^{2}+  \Vert d_{6}\eta\Vert^{2}. $$
Hence $d\eta=0$ and the claim is proved.

We now have $V_{\phi}=V \oplus V$ with one copy of $V$ for each boundary component. In this case $H^{3}(M_{L}\partial M_{L})= H^{2}(N)$ and the map
$p:V_{\phi}\rightarrow H^{3}(M_{L}\partial M_{L})$ is 
$$p(\theta_{1}, \theta_{2})= \theta_{1}-\theta_{2}.$$
So the kernel of $p$ is the diagonal copy of $V$ in $V\oplus V$. On the other hand it is clear that the space $W_{\phi}$ is isomorphic to $V$ and that $\iota$ maps on to the diagonal so we see from Proposition 9 that $K_{\phi}=0$.\qed\end{proof}

We can apply our main result to get an  existence theorem for deformations of these product manifolds. To state this we need to pin down the choice of enhancement data. Recall that the space of enhancements is an affine space modelled on $H^{3}(M,\partial M)$ but with no canonical origin. In our case we have $H^{3}(M_{L},\partial M_{L})= H^{2}(N)$, as above. Fix   $2$-cycles $\sigma_{a}$ in $N$ representing a basis for $H_{2}(N)$. Then for any $3$-form $\psi$ on $M_{L}$ we define
$$  I_{a}=\int_{\sigma_{a}\times [0,L]} \psi. $$
The collection of these integrals can be regarded as an element $I(\psi)\in H^{2}(N)$. This induces an identification between the enhancements of a given boundary  form and $H^{2}(N)$. Clearly $I(\phi_{L})=L[\omega]$. 
\begin{thm}
For $L$ and $(N,\omega,\rho)$ as above there is a neighbourhood $U$ of  $\rho$ in the space of closed forms on $N$ (in the $C^{\infty}$ topology) and a neighbourhood $U'$ of $L[\omega]$ in $H^{2}(N)$  such that if $\rho_{0}, \rho_{L}$ are in $U$ and define the same cohomology class in $H^{3}(N)$ and $\nu$ is in $U'$ then there is a torsion free $G_{2}$-structure $\phi$ on $M_{L}$ which restricts to $\rho_{0}, \rho_{L}$ on the two boundary components  and with $I(\phi)=\nu$.
\end{thm}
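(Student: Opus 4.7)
The plan is to reduce to Theorem 2. By Lemma 9 we have $H_{\phi}=0$ for $\phi=\omega\wedge dt+\rho$ on $M_{L}$, so Theorem 2 supplies, for any closed 3-form $\theta$ on $M_{L}$ sufficiently small in $L^{2}_{s}$, a torsion-free representative in the enhancement class $\hatrho(\theta)$ of $\phi+\theta$ close to $\phi+\theta$. It therefore suffices to manufacture, from the data $(\rho_{0},\rho_{L},\nu)$ close to $(\rho,\rho,L[\omega])$, a closed positive 3-form $\phi'$ on $M_{L}$ which restricts to $(\rho_{0},\rho_{L})$ on the two boundary components, has $I(\phi')=\nu$, and is close to $\phi$ in $C^{\infty}$; then applying Theorem 2 to $\theta=\phi'-\phi$ yields a torsion-free $G_{2}$-structure in the enhancement class determined by $(\rho_{0},\rho_{L},\nu)$.

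To construct $\phi'$, fix once and for all a smooth cut-off $\chi\colon[0,L]\to[0,1]$ with $\chi(0)=1$ and $\chi(L)=0$. The hypothesis $[\rho_{0}]=[\rho_{L}]\in H^{3}(N)$ and the Green's operator $G_{N}$ of the Calabi--Yau metric on $N$ allow us to choose the canonical primitive $\gamma=d^{*}G_{N}(\rho_{0}-\rho_{L})$, so that $d\gamma=\rho_{0}-\rho_{L}$; this $\gamma$ depends linearly and boundedly on $\rho_{0}-\rho_{L}$ in every Sobolev norm and vanishes at the base data. A direct check shows that
\[
\phi'_{0}=\chi(t)\rho_{0}+(1-\chi(t))\rho_{L}+\chi'(t)\,\gamma\wedge dt
\]
is closed, restricts to $(\rho_{0},\rho_{L})$ on $\partial M_{L}$, and has periods $I(\phi'_{0})_{a}=-\int_{\sigma_{a}}\gamma$. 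Now let $\mu$ be the harmonic 2-form on $N$ whose periods satisfy $L\int_{\sigma_{a}}\mu=\nu_{a}+\int_{\sigma_{a}}\gamma$; this $\mu$ exists and is unique because the period pairing $H^{2}(N;\bR)\to\bR^{b_{2}(N)}$ along the $\sigma_{a}$ is an isomorphism, and at the base data $(\rho,\rho,L[\omega])$ it yields $\mu=\omega$. Setting
\[
\phi'=\phi'_{0}+\mu\wedge dt
\]
gives a closed 3-form with the prescribed boundary trace and $I(\phi')=\nu$, equal to $\phi$ at the base data, and depending continuously on $(\rho_{0},\rho_{L},\nu)$ in the $C^{\infty}$ topology.

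By shrinking the neighbourhoods $U$ and $U'$, we may ensure that $\phi'-\phi$ is arbitrarily small in $C^{\infty}$, hence in $L^{2}_{s}$; in particular $\phi'$ stays positive, since $G_{2}$-positivity is a $C^{0}$-open condition. Applying Theorem 2 with $\theta=\phi'-\phi$ then produces the required torsion-free $G_{2}$-structure on $M_{L}$. The only substantive step in the argument is the construction of $\phi'$ with continuous dependence on the data---this is where Hodge theory on the closed Calabi--Yau $N$ enters, to select $\gamma$ and $\mu$ canonically---after which Lemma 9, Theorem 2 and the elliptic machinery of Sections 3 and 4 carry out the rest.
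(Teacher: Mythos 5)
Your proposal is correct and follows the route the paper intends: Theorem 4 is exactly the combination of Lemma 8 (which shows $H_{\phi}=0$ for the product structure on $M_{L}$, the lemma you call ``Lemma 9'') with Theorem 2, applied after constructing a closed positive reference form realizing the prescribed boundary traces and enhancement $\nu$. The paper leaves that reference-form construction implicit, and your explicit cut-off interpolation with the Hodge-theoretic primitive $\gamma$ and harmonic correction $\mu\wedge dt$ is a valid way to supply it.
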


Of course there is also a uniqueness statement, for solutions close to $\phi_{L}$.

\end{document}